\documentclass[notitlepage,11pt,reqno]{amsart}
\usepackage[foot]{amsaddr}
\usepackage{amssymb,nicefrac,bm,upgreek,mathtools,verbatim}
\usepackage[final]{hyperref}
\usepackage[mathscr]{eucal}
\usepackage{dsfont}
\usepackage[normalem]{ulem}
\usepackage{amsopn}

\usepackage[margin=1in]{geometry}
\allowdisplaybreaks
\newcommand{\stkout}[1]{\ifmmode\text{\sout{\ensuremath{#1}}}\else\sout{#1}\fi}
\newtheorem{lemma}{Lemma}[section]
\newtheorem{theorem}{Theorem}[section]

\newtheorem{corollary}{Corollary}[section]

\theoremstyle{definition}
\newtheorem{definition}{Definition}[section]

\theoremstyle{remark}
\newtheorem{remark}{Remark}[section]
\numberwithin{theorem}{section}
\numberwithin{equation}{section}

\hypersetup{
  colorlinks=true,
  citecolor=mblue,
  linkcolor=mblue,
  urlcolor = blue,
  anchorcolor = blue,
  frenchlinks=false,
  pdfborder={0 0 0},
  naturalnames=false,
  hypertexnames=false,
  breaklinks}
\usepackage[capitalize,nameinlink]{cleveref}
\usepackage[abbrev,msc-links,nobysame,citation-order]{amsrefs} 

\crefname{section}{Section}{Sections}
\crefname{subsection}{Section}{Sections}
\crefname{condition}{Condition}{Conditions}
\crefname{hypothesis}{Hypothesis}{Conditions}
\crefname{assumption}{Assumption}{Assumptions}
\crefname{lemma}{Lemma}{Lemmas}
\crefname{fact}{Fact}{Facts}

\Crefname{figure}{Figure}{Figures}

\crefformat{equation}{\textup{#2(#1)#3}}
\crefrangeformat{equation}{\textup{#3(#1)#4--#5(#2)#6}}
\crefmultiformat{equation}{\textup{#2(#1)#3}}{ and \textup{#2(#1)#3}}
{, \textup{#2(#1)#3}}{, and \textup{#2(#1)#3}}
\crefrangemultiformat{equation}{\textup{#3(#1)#4--#5(#2)#6}}%
{ and \textup{#3(#1)#4--#5(#2)#6}}{, \textup{#3(#1)#4--#5(#2)#6}}%
{, and \textup{#3(#1)#4--#5(#2)#6}}

\Crefformat{equation}{#2Equation~\textup{(#1)}#3}
\Crefrangeformat{equation}{Equations~\textup{#3(#1)#4--#5(#2)#6}}
\Crefmultiformat{equation}{Equations~\textup{#2(#1)#3}}{ and \textup{#2(#1)#3}}
{, \textup{#2(#1)#3}}{, and \textup{#2(#1)#3}}
\Crefrangemultiformat{equation}{Equations~\textup{#3(#1)#4--#5(#2)#6}}%
{ and \textup{#3(#1)#4--#5(#2)#6}}{, \textup{#3(#1)#4--#5(#2)#6}}%
{, and \textup{#3(#1)#4--#5(#2)#6}}

\crefdefaultlabelformat{#2\textup{#1}#3}
\newcommand{\vertiii}[1]{{\left\vert\kern-0.25ex\left\vert\kern-0.25ex\left\vert #1 
    \right\vert\kern-0.25ex\right\vert\kern-0.25ex\right\vert}}

\newcommand{\Uadm}{\mathfrak U}
\newcommand{\Act}{\mathbb{U}}
\newcommand{\Usm}{\mathfrak U_{\mathsf{sm}}}

\newcommand{\Um}{\mathfrak U_{\mathsf{m}}}

\newcommand{\pV}{\mathrm{V}} 
\newcommand{\pv}{\mathrm{v}} 
\newcommand{\bS}{{\mathbb{S}}}
\newcommand{\cB}{{\sB}}  
\newcommand{\sB}{{\mathscr{B}}}  
\newcommand{\cC}{{\mathcal{C}}}   
\newcommand{\sE}{{\mathscr{E}}} 
\newcommand{\sF}{{\mathfrak{F}}}   
\newcommand{\cJ}{{\mathcal{J}}}  
  
\newcommand{\cK}{{\mathcal{K}}}  
\newcommand{\sL}{{\mathscr{L}}}  %
\newcommand{\Lp}{{L}}            

\newcommand{\cP}{{\mathcal{P}}}  
\newcommand{\sP}{{\mathscr{P}}}
\newcommand{\Lyap}{{\mathcal{V}}}  
\newcommand{\cX}{{\mathcal{X}}}

\newcommand{\RR}{\mathds{R}}
\newcommand{\NN}{\mathds{N}}

\newcommand{\Rd}{{\mathds{R}^{d}}}
\DeclareMathOperator{\Exp}{\mathbb{E}}

\newcommand{\D}{\mathrm{d}}

\newcommand{\cD}{\mathcal{D}} 
 
\newcommand{\Sob}{{\mathscr W}}    
\newcommand{\Sobl}{{\mathscr W}_{\text{loc}}} 

\newcommand{\df}{:=}
\newcommand{\transp}{^{\mathsf{T}}}

\DeclareMathOperator*{\trace}{Tr}

\DeclareMathOperator*{\diam}{diam}

\DeclareMathOperator*{\argmin}{arg\,min}

\newcommand{\order}{{\mathscr{O}}}
\newcommand{\sorder}{{\mathfrak{o}}}
\newcommand{\grad}{\nabla}
\newcommand{\uuptau}{{\Breve\uptau}}

\newcommand{\abs}[1]{\lvert#1\rvert}
\newcommand{\norm}[1]{\lVert#1\rVert}

\usepackage{color}
\definecolor{dmagenta}{rgb}{.4,.1,.5}
\definecolor{dblue}{rgb}{.0,.0,.5}
\definecolor{mblue}{rgb}{.0,.0,.7}
\definecolor{ddblue}{rgb}{.0,.0,.4}
\definecolor{dred}{rgb}{.7,.0,.0}
\definecolor{dgreen}{rgb}{.0,.5,.0}
\definecolor{Eeom}{rgb}{.0,.0,.5}

\begin{document}
\title[On the Approximation of Optimal Control in Regime-Switching Diffusions]
{On the Approximation of Optimal Control in Regime-Switching Diffusions}

\author[Somnath Pradhan]{Somnath Pradhan$^\dag$}
\address{$^\dag$Department of Mathematics, Indian Institute of Science Education and Research Bhopal,
Bhopal, MP - 462066, India}
\email{somnath@iiserb.ac.in}

\author[Dinesh Rathia]{Dinesh Rathia$^{\ddag}$}
\address{$^\ddag$Department of Mathematics, Indian Institute of Science Education and Research Bhopal,
Bhopal, MP - 462066, India}
\email{dinesh23@iiserb.ac.in}

\keywords{Robust control, Regime-switching controlled diffusions, Hamilton-Jacobi-Bellman equation, Coupled system, Stationary control}

\subjclass[2000]{Primary: 93E20, 60J25; secondary: 49J55}



\begin{abstract}
We study approximation and structural simplification of optimal control policies for controlled regime-switching diffusion processes for discounted, ergodic, finite-horizon, and exit-time criteria. We first establish continuity of the cost functionals over classes of Markov and stationary Markov policies by exploiting elliptic and parabolic regularity of the corresponding Hamilton--Jacobi--Bellman and Poisson equations. Using density results of policies with finite-action, piecewise-constant, and Lipschitz continuous, we show that each control problem admits $\varepsilon$-optimal policies within these structured subclasses. We then construct an Euler--Maruyama approximation of the controlled regime-switching diffusion under piecewise-constant controls. We prove strong convergence of the controlled state process and establish convergence of the associated finite-horizon value functions with rate $O(h^{\gamma/2})$. Building on this discretization, we develop a finite-state approximation of the induced discrete-time Markov chain via state-space quantization. We show that the value functions of the finite models converge uniformly on compact sets to the value function of the original problem, and that optimal policies of the approximating models are asymptotically optimal.

These results provide a systematic framework for approximating regime-switching diffusion control problems and justify the use of structured policies and finite-state models for numerical implementation.
\end{abstract}

\maketitle

\tableofcontents

\section{Introduction}

Stochastic optimal control is a fundamental area in applied mathematics and engineering, concerned with optimizing the behavior of dynamical systems evolving under uncertainty. Approximation and structural simplification of optimal control policies are therefore of central importance in both theory and applications. While the existence and characterization of optimal policies are well understood through dynamic programming and Hamilton–Jacobi–Bellman (HJB) equations \cites{FlemingSoner2006,Bor-book,KushnerDupuis2001,ABG-book,HP09-book,yong2012stochastic} and the Pontryagin maximum principle \cite{MR166037}, significantly less is known about whether optimal performance can be achieved using simpler or implementable classes of controls, or whether discrete-time approximations accurately capture the behavior of the underlying continuous-time system.

From both theoretical and computational perspectives, it is therefore essential to determine whether optimal policies can be approximated by structured strategies, such as finite-action, piecewise-constant, or Lipschitz continuous controls and whether discretized models provide accurate approximations for numerical computation, simulation, and learning-based methods. These questions are central to numerical algorithms, reinforcement learning, and practical implementation of stochastic control problems.

\medskip
\noindent
\textbf{Background and related work.}
Approximation of controlled diffusion processes has been studied extensively from several perspectives. A prominent approach is based on weak convergence and controlled Markov chain approximations, as developed by Kushner and Dupuis \cite{KushnerDupuis2001}, where discrete-time models are constructed to approximate both the continuous-time dynamics and the associated value functions. Another line of work relies on finite-difference schemes and viscosity solution methods for Hamilton--Jacobi--Bellman equations; see, for example, \cites{MR1466804,MR1759507,MR1668597,MR2177879,MR1916291}, where convergence and stability of numerical schemes are established. Classical numerical methods for stochastic differential equations, including the Euler--Maruyama scheme and its higher-order variants, are well understood; see \cite{KloedenPlaten1992}.

For controlled non-degenerate diffusions, Krylov \cite{MR1668597} showed that restricting controls to be constant over intervals of length $h^2$ yields an approximation of the value function with error of order $h^{1/3}$. Subsequent works, particularly \cites{MR2177879,MR1916291}, improved these estimates and obtained sharper convergence rates based on refinements of earlier techniques \cites{MR1466804,MR1759507,MR1668597}. More recently, Pradhan and Y\"uksel \cites{pradhan2025nearoptimalitydiscretetimeapproximations}*{Corollaries 4.8 and 5.4} established a convergence rate of order $h^{1/2}$ for controlled McKean--Vlasov diffusions over the space of piecewise constant controls.

Motivated by this, the present work establishes an explicit convergence rate of order $h^{\gamma/2}$, for $\gamma\in (0,\frac{1}{2})$, for controlled regime-switching diffusions under piecewise-constant controls.

From a structural viewpoint, continuity of induced costs under suitable control topologies plays a key role in approximation theory. Borkar \cite{Bor89} introduced a topology on the space of control policies that facilitates the analysis of continuity properties of performance criteria. Building on this framework, Pradhan and Y\"uksel \cites{SPSY2,SPSY} established the density of structured policy classes within the class of Markov and stationary Markov controls, leading to near-optimality results for controlled diffusions.

Approximation problems for discrete-time Markov decision processes (MDPs) have also been widely studied, including approximate dynamic programming, value and policy iteration, linear programming, reinforcement learning, state aggregation, etc; see \cites{MR2233994,MR3791811,Dufour04032015,1100984,133184,DUFOUR20121254,MR3036990,MR3722422,FOX1971665,doi:10.1287/moor.3.3.231,doi:10.1287/moor.4.2.179,doi:10.1287/moor.6.4.493}. In continuous time, discretization of diffusion models via schemes such as Euler–Maruyama (EM) has been extensively analyzed. For regime-switching diffusion processes (RSDPs)  with state-independent switching, strong and weak approximation results are available \cites{MAO2007936,YUAN2004223}. However, the state-dependent case is significantly more delicate due to the coupling between the continuous state and the switching mechanism. Even for uncontrolled models, obtaining sharp error bounds is challenging, with $L^1$, $L^2$, and $L^p$ convergence results appearing in \cites{ShaoEM,jin,math12121819}. In addition to time discretization, finite-state approximations play a central role in numerical stochastic control, as they enable the reduction of continuous-state problems to tractable Markov decision processes. Such approximations are well understood for Markov decision processes with both discrete and continuous state spaces \cites{Dufour04032015,MR3036990,DUFOUR20121254,1100984,133184,MR3722422}.

In this paper, we apply density results for structured control policies to the approximation of state-dependent \emph{controlled regime-switching diffusion processes} under piecewise-constant controls, with particular emphasis on the finite-horizon cost criterion.
 These hybrid systems, in which continuous stochastic dynamics interact with a controlled Markov chain, arise in applications such as finance, engineering systems, and queueing networks, but pose significant analytical challenges. While optimal control of RSDPs has been studied under finite-horizon \cite{FH}, discounted \cite{AGM93}, ergodic \cite{Ghosh97}, and exit-time criteria \cite{rathia}, a unified approximation framework combining structural policy simplification, time discretization and state approximation remains largely incomplete.

We introduce a two-step approximation framework for controlled regime-switching diffusions with state-dependent switching, integrating Euler–Maruyama discretization and finite-state quantization, and prove convergence of value functions together with asymptotic optimality of the induced policies. To the best of our knowledge, this is the first work that combines structural policy approximation, Euler–Maruyama discretization, and finite-state quantization for controlled regime-switching diffusions with state-dependent switching.

\medskip
\noindent
\textbf{Objective of the paper.}
The objective of this paper is to develop a systematic approximation framework for controlled RSDPs. Within a unified setting, we analyze discounted, ergodic, finite-horizon, and exit-time cost criteria, and investigate how optimal performance can be systematically approximated through structured control policies, then as an application of the near-optimality results for the finite horizon case, establish the near-optimality with respect to the time discretization and finite-state approximations.

\medskip
\noindent
\textbf{Main contributions.}
Our contributions are fourfold and establish a unified approximation framework combining structural policy simplification,  time discretization, and state-space quantization.
\begin{itemize}
\item \textbf{Continuity of cost functionals.}
We establish continuity of the induced cost functionals with respect to Markov and stationary Markov policies under the Borkar topology. The analysis relies on regularity properties of the associated HJB and Poisson equations. These results play an important role in approximation and near-optimality.

\item \textbf{Near-optimality of structured policies.}
Using known density results for finite-action, piecewise-constant, and Lipschitz continuous stationary Markov policies, together with the continuity results established in this paper, we show that each control problem admits $\varepsilon$-optimal policies within these structured subclasses.
\end{itemize}
Moreover, as an application of the above near-optimality results we obtain following Markov chain approximation results for the finite-horizon case:
\begin{itemize}
\item \textbf{Discrete-time approximation.}
We construct an Euler–Maruyama Markov chain approximation under piecewise-constant controls and analyze the resulting controlled discrete-time model. We establish strong convergence of the controlled state process and prove convergence of discrete-time value functions to their continuous-time counterparts, with an explicit error bound of order $O(h^{\gamma/2})$, \(\gamma\in (0,\frac{1}{2})\). Consequently, for any $\varepsilon > 0$, optimal policies of the
time-discretized model yield $\varepsilon$-optimal performance for the original system.

\item \textbf{Finite-state approximation and asymptotic optimality.}
Building on the discrete-time approximation, we develop a finite-state approximation via state-space quantization of the induced Markov chain. We show that the value functions of the finite-state models converge uniformly on compact sets to those of the original model and that optimal policies
obtained from these models are asymptotically optimal for the original control problem.
\end{itemize}
Taken together, these results provide a unified and rigorous framework for approximating regime-switching stochastic control problems and justify the use of structured policies and discretized models in numerical implementations.

\medskip
\noindent
\textbf{Organization of the paper.}
\cref{PD} introduces the model and assumptions. \cref{Cost} formulates the cost criteria and policy spaces. Continuity results are developed in \cref{continuity}. \cref{Denseness}--\cref{Section-near-op} leverage existing density results to establish the near optimality of structured policies. \cref{Application} develops the Euler--Maruyama approximation and convergence of discrete-time value functions, followed by the finite-state approximation and asymptotic optimality results.
\section{ Description of the problem}\label{PD} Let $\Act$ be a compact metric space of control actions, and let $\pV=\mathscr{P}(\Act)$ denote the space of probability measures on $\Act$, 
equipped with the topology of weak convergence.
Consider the controlled  RSDP $(X_t, S_t)$ taking values in $\Rd \times \mathbb{S}$, where $\mathbb{S} = \{1, \dots, N\}$ is a finite set of regimes. The process is defined on a complete probability space $(\Omega,\sF,\mathbb{P})$, and its dynamics are governed by the following stochastic differential equations:
\begin{equation}\label{E1.1}
\begin{aligned}
dX_t &= b(X_t, S_t, U_t)\,dt + \upsigma(X_t, S_t)\,dW_t,\\
dS_t &= \int_{\RR} h(X_t, S_{t-}, U_t, z)\, \mathcal{P}(dt, dz)
\end{aligned}
\end{equation}
where 
\begin{itemize}
\item $X_0 $ and $S_0$ denote the prescribed initial distribution of the diffusion and regime processes, respectively.

\item The functions, $b=[b_1,\ldots,b_d]^{\transp} : \Rd \times \mathbb{S} \times \Act \to \Rd$ is the drift coefficient, and
$\upsigma=[\upsigma_{ij}]_{1\le i,j\le d} : \Rd \times \mathbb{S} \to \RR^{d\times d}$ is the diffusion matrix.

    \item $W$ is a $d$-dimensional standard Wiener process.
    
    \item $\mathcal{P}(dt, dz)$ is a Poisson random measure on $\RR_+ \times \RR$ with intensity $dt \times {\bf m}(dz)$, where ${\bf m}$ is the Lebesgue measure on $\RR$.
    
    \item $\mathcal{P}(\cdot, \cdot)$, $W(\cdot)$, $X_0$, $S_0$ are independent.
    
    \item The jump function $h : \Rd \times \mathbb{S} \times \Act \times \RR \to \RR$ is defined by
     \begin{equation}\label{markov-h}
        h(x, i, \zeta, z) := 
        \begin{cases}
            j - i & \text{if } z \in \Delta_{ij}(x, \zeta), \\
            0 & \text{otherwise},
        \end{cases}
     \end{equation}

    where for each  $(x,\zeta)$ and $i,j\in\mathbb{S}$, the sets $\Delta_{ij}(x, \zeta)$ are left-closed, right-open disjoint intervals of $\RR$ having length $m_{ij}(x, \zeta)$ that partition $\RR$.
    For each $i\neq j\in\mathbb{S}$, define
\[
g_{ij}
:=
\sup_{(x,\zeta)\in\Rd\times\Act} m_{ij}(x,\zeta)<\infty.
\]
We define the intervals $\Delta_{ij}(x,\zeta)$, $i\neq j$, as follows:
\begin{align*}
\Delta_{12}(x,\zeta)
&:=[0,m_{12}(x,\zeta)), \nonumber\\
\Delta_{21}(x,\zeta)
&:=[g_{12},\, g_{12}+m_{21}(x,\zeta)), \nonumber\\
\Delta_{13}(x,\zeta)
&:=[g_{12}+g_{21},\, g_{12}+g_{21}+m_{13}(x,\zeta)), \nonumber\\
\Delta_{31}(x,\zeta)
&:=[g_{12}+g_{21}+g_{13},\, 
g_{12}+g_{21}+g_{13}+m_{31}(x,\zeta)), \nonumber\\
\Delta_{23}(x,\zeta)
&:=[g_{12}+g_{21}+g_{13}+g_{31},\,
g_{12}+g_{21}+g_{13}+g_{31}+m_{23}(x,\zeta)), \nonumber\\
\Delta_{14}(x,\zeta)
&:=[g_{12}+g_{21}+g_{13}+g_{31}+g_{23}, \nonumber\\
&\qquad
g_{12}+g_{21}+g_{13}+g_{31}+g_{23}+m_{14}(x,\zeta)), \nonumber\\
\Delta_{41}(x,\zeta)
&:=[g_{12}+g_{21}+g_{13}+g_{31}+g_{23}+g_{14},
\nonumber\\
&\qquad
g_{12}+g_{21}+g_{13}+g_{31}+g_{23}+g_{14}+m_{41}(x,\zeta)),
\nonumber\\
&\qquad\vdots
\end{align*}

Note that for any $(i,j)\neq (k,l)$ and $x\in\Rd$,
\[
\Delta_{ij}(x,\zeta)\cap \Delta_{kl}(x,\zeta)=\emptyset.
\]
Moreover, for any $i\neq j\in \mathbb{S}$ and $x,\bar x\in\Rd$,
the intervals $\Delta_{ij}(x,\zeta)$ and $\Delta_{ij}(\bar x, \bar \zeta)$ have the same left endpoint.
 For convenience, we set $\Delta_{ii}(x,\zeta)=\emptyset$ and $\Delta_{ij}(x,\zeta)=\emptyset$ if $m_{ij}(x,\zeta)=0$.
    
    \item $\mathbb{M} := (m_{ij})_{i,j \in \mathbb{S}}$ denotes the transition-rate matrix of the controlled Markov chain $S_t$, where $m_{ij}:\Rd \times \Act\to \RR$ are the switching rates such that $m_{ij} \geq 0$ for $i \neq j$ and, $\sum_{j=1}^{N} m_{ij} = 0$, $i \in \mathbb{S}$.

    We assume switching rates are bounded (i.e., there exists   $M>0\;\text{ such that }\;\norm{m_{ij}}_{\infty}\le M,\;\forall\;i,j\in\mathbb{S} \;$)
    throughout this article. 
    
    \item The control process $\{U_t\}$ takes values in $\pV$, is progressively measurable with respect to $\sF_t := \text{completion of } \sigma\{X_s, S_s; s \leq t\}$ relative to $(\sF, \mathbb{P})$, and is non-anticipative: for each $t \geq 0$, the $\sigma$-field $\sigma\{U_s\,;\, s \leq t\}$ is independent of 
    \[
        \sigma\{W_s - W_t,\, \cP(A, B) : A \in \mathcal{B}([s, \infty)), B \in \mathcal{B} (\RR), s \geq t\}.
    \]
    The process $U$ is called an \textit{admissible control}, and the set of all admissible controls is denoted by $\Uadm$ (see, \cite{ABG-book}*{Chapter 5, p. 197}).
\item 
 For relaxed controls $\mathrm v \in \pV=\mathscr P(\Act)$, the drift \(b: \Rd \times \mathbb{S} \times \pV \to \Rd\) is extended by
\begin{equation*}
b (x,i,\mathrm{v}) = \int_{\Act} b(x,i,\zeta)\mathrm{v}(\D \zeta), 
\end{equation*}
\end{itemize}

To ensure existence and uniqueness of strong solutions to \cref{E1.1}, we impose the following structural assumptions on the drift coefficient $b$, the diffusion matrix $\upsigma$, and the transition rate matrix $\mathbb{M}$.
 
\subsection{Assumptions}\hspace{50em} 
\vspace{2mm}

Throughout the paper we impose the following structural conditions on the coefficients of \cref{E1.1}.
\vspace{0.01mm}
\begin{itemize}
\item[\hypertarget{A1}{(A1)}]
\emph{Local Lipschitz continuity:\/}
The functions $b(x,i,\zeta),\upsigma^{ij}(x,k),\,m_{ij}(x,\zeta)$,\,\,
are continuous and locally Lipschitz continuous in $x$ (uniformly with respect $\zeta$) with a Lipschitz constant $C_{R}>0$
depending on $R>0$, i.e.,
\begin{equation*}
\abs{b(x,i,\zeta) - b(y,i, \zeta)}^2 + \norm{\upsigma(x,i) - \upsigma(y,i)}^2\,+\,|m_{ij}(x,\zeta)-m_{ij}(y,\zeta)|^2 \,\le\, C_{R}\,\abs{x-y}^2
\end{equation*}
for all $x,y\in \sB_R\,,\,i,j \in \mathbb{S}$ and $\zeta\in\Act$, where $\norm{\upsigma}\df\sqrt{\trace(\upsigma\upsigma\transp)}$\,. 

\medskip
\item[\hypertarget{A2}{(A2)}]
\emph{Affine growth condition:\/} The drift term
$b$ and the diffusion coefficient $\upsigma$ satisfy a global growth condition of the form
\begin{equation*}
\sup_{\zeta\in\Act}\, \langle b(x,i, \zeta),x\rangle^{+} + \norm{\upsigma(x,i)}^{2} \,\le\,C_0 \bigl(1 + \abs{x}^{2}\bigr) 
\end{equation*}
for all $x\in\Rd,\,i \in \mathbb{S}$ and for some constant $C_0>0$.

\medskip
\item[\hypertarget{A3}{(A3)}]
\emph{Nondegeneracy:\/}
For each $R>0$, it holds that
\begin{equation*}
\sum_{i,j=1}^{d} a^{ij}(x,k)z_{i}z_{j}
\,\ge\,C^{-1}_{R} \abs{z}^{2} \qquad\forall\, x\in \sB_{R}\,, k\in \mathbb{S},
\end{equation*}
and for all $z=(z_{1},\dotsc,z_{d})\transp\in\Rd$,
where $a\df \frac{1}{2}\upsigma \upsigma\transp$.
\end{itemize}
\vspace{1em}
 Under these assumptions \hyperlink{A1}{(A1)}--\hyperlink{A3}{(A3)}, the system \cref{E1.1}
admits a unique, strong solution  for every admissible control (see, for example, \cite{ABG-book}*{p. 197} and \cite{Yin10}*{Theorem 3.10}), with 
\[
X \in \cC(\RR_+; \Rd), \quad S \in\cD(\RR_+; \mathbb{S}),
\]
where \( \cD(\RR_+; \mathbb{S}) \) is the space of all right-continuous functions from \( \RR_+ \) to \( \mathbb{S} \) having left limits.

The ergodic behavior of the joint process $Y_t := (X_t, S_t)$ depends strongly on the coupling
coefficients $\{m_{ij}\}$. For this, we define the matrix
\[
\widetilde{\mathbb{M}}(x, \zeta) := (\widetilde{m}_{ij}(x, \zeta)) : \Rd \times  \Act \to \RR^{N \times N},
\]
where
\[
\widetilde{m}_{ij}(x, \zeta) := 
\begin{cases}
m_{ij}(x, \zeta), & \text{if } i \ne j, \\
0, & \text{otherwise}.
\end{cases}
\]

In addition to the usual structural assumptions \hyperlink{A1}{(A1)}--\hyperlink{A3}{(A3)}, we impose the following condition:

\vspace{1em}
\noindent
\begin{itemize}
\item[\hypertarget{A4}{(A4)}]
\emph{Irreducibility:\/}
The matrix \( \breve{\mathbb{M}}(x) := (\breve{m}_{ij}(x)) \), where 
\[
\breve{m}_{ij}(x) := \min_{\zeta \in  \Act} \widetilde{m}_{ij}(x, \zeta),
\]
is irreducible in \( \Rd \), that is, for every nonempty disjoint sets  \( \,\mathbb{S}_1, \mathbb{S}_2 \subset \mathbb{S} \) satisfying \( \mathbb{S}_1 \cup\, \mathbb{S}_2 = \mathbb{S} \), there exist \( i_0 \in \mathbb{S}_1 \) and \( j_0 \in \mathbb{S}_2 \) such that
\[
\big|\{ x \in \Rd : \breve{m}_{i_0 j_0}(x) > 0 \} \big| > 0,
\]
where \( | \cdot | \) denotes the Lebesgue measure. 
\end{itemize}
\vspace{2mm}
In this article, we consider the problem of minimizing discounted, finite horizon, exit-time, and ergodic cost criteria.
\vspace{1em}
Let $c\colon\Rd\times \mathbb{S} \times \Act \to \RR_+$ be the \emph{running cost} function. We assume that 
\begin{itemize}
\item[\hypertarget{A5}{{(A5)}}]
The \emph{running cost} $c$ is bounded (i.e., there exist $M>0$ such that $\|c\|_{\infty} \leq M$), continuous and locally Lipschitz continuous in $x$ uniformly with respect to $\zeta\in\Act$.
\end{itemize}
 For relaxed controls $\mathrm v \in \pV=\mathscr P(\Act)$, the running cost $c\colon\Rd\times \mathbb{S}\times\pV \to\RR_+$ is extended by 
\begin{equation*}
c(x,i,\pv) := \int_{\Act}c(x,i,\zeta)\pv(\D\zeta)\,.
\end{equation*}

\subsection{Cost criteria:}\label{Cost} 

The following cost evaluation criteria will be considered in this article.
\vspace{2mm}
\subsubsection{\bf Discounted cost criterion.} \hspace{50em}
\vspace{0.5em}

For any admissible control $U \in\Uadm$, the associated \emph{$\alpha$-discounted cost} is defined by
\begin{equation}\label{EDiscost}
\cJ_{\alpha}^{U}(x,i, c) \,\df\, \Exp_{x,i}^{U} \left[\int_0^{\infty} e^{-\alpha t} c(X_s,S_s, U_s) \D s\right],\quad (x,i)\in\Rd \times \mathbb{S}\,,
\end{equation} where $\alpha > 0$ is the discount factor, $(X_{(\cdot)},S_{(\cdot)})$ is the solution of the controlled system  \cref{E1.1} under $U \in\Uadm$, and $\Exp_{x,i}^{U}$ denotes the expectation with respect to the law of the process $(X_{(\cdot)},S_{(\cdot)})$ with the initial condition $(x,i)$. The control objective is to minimize the cost in~\eqref{EDiscost} over all admissible controls. A control $U^{*}\in\Uadm$ is said to be \emph{optimal} if, for every $(x,i)\in\Rd\times\mathbb{S}$,
\begin{equation}\label{OPDcost}
\cJ_{\alpha}^{U^*}(x,i, c) = \inf_{U\in \Uadm}\cJ_{\alpha}^{U}(x,i, c) \,\,\, (\,=:\, \,\, V_{\alpha}(x,i))\,,
\end{equation} where $V_{\alpha}(x,i)$ is called the $\alpha$-discounted optimal value function.

\vspace{2mm}
\subsubsection{\bf Ergodic cost criterion.}\hspace{50em}
\vspace{0.5em}

For a control $U \in \Uadm$, the corresponding \emph{ergodic cost functional} is defined as
\begin{equation*}\label{ECost1}
\sE_{x,i}(c,U) = \limsup_{T\to \infty}\frac{1}{T}\Exp_{x,i}^{U}\left[\int_0^{T} c(X_s,S_s, U_s) \D{s}\right]\,,\qquad (x,i)\in\Rd\times\mathbb{S}
\end{equation*} and the optimal value is defined as
\begin{equation*}\label{ECost1Opt}
\sE^*(c) \,\df\, \inf_{(x,i)\in\Rd\times \mathbb{S}}\,\inf_{U\in \Uadm}\sE_{x,i}(c, U)\,.
\end{equation*}
Then a control $U^*\in \Uadm$ is said to be optimal if we have 
\begin{equation*}\label{ECost1Opt1}
\sE_{x,i}(c, U^*) = \sE^*(c)\,.
\end{equation*}

\subsubsection{\bf Finite horizon cost.}\hspace{50em}
\vspace{0.5em}

For any $U\in \Uadm$, the associated \emph{finite horizon cost} is given by
\begin{equation*}\label{FiniteCost1}
\cJ_{T}^U(x,i,c) = \Exp_{x,i}^{U}\left[\int_0^{T} c(X_s,S_s,U_s) \D{s} + c_{_{T}}(X_T,S_T)\right]\,,
\end{equation*} where $c_{_{T}}(\cdot,\cdot)$ is the terminal cost. The optimal value is defined as
\begin{equation*}\label{FiniteCost1Opt}
\cJ_{T}^*(x,i,c) \,\df\, \inf_{U\in \Uadm}\cJ_{T}^U(x,i,c)\,.
\end{equation*}
Thus, a policy $U^*\in \Uadm$ is said to be (finite horizon) optimal if we have 
\begin{equation*}\label{FiniteCost1Opt1}
\cJ_{T}^{U^*}(x,i,c) = \cJ_{T}^*(x,i,c)\quad \text{for all}\,\,\, (x,i)\in \Rd\times\mathbb{S},.
\end{equation*}
We assume the terminal function $c_{_{T}}$ satisfies $c_{_{T}} \in \Sobl^{2,p}(\Rd\times \mathbb{S}) \cap L^\infty(\Rd\times \mathbb{S})$ for some $p \ge 2$ throughout this paper.
\vspace{2mm}

\subsubsection{\bf Cost up to an exit time.}\hspace{50em}
\vspace{0.5em}

For each $U\in\Uadm$, the associated exit time cost is defined as
 \[
    \hat{\cJ}^U_e(x,i) := \Exp_{x,i}^U\!\left[
        \int_0^{\uptau(\mathcal{O})} e^{-\int_0^t \beta(X_s,S_s,U_s)\,ds}\, c(X_t,S_t,U_t)\,dt
        + e^{-\int_0^{\uptau(\mathcal{O})} \beta(X_s,S_s,U_s)\,ds}\, h(X_{\uptau(\mathcal{O})},S_{\uptau(\mathcal{O})})
    \right]
   \]
where $\mathcal{O} \subset \Rd$ is a bounded domain, $\beta(\cdot,\cdot,\cdot): \bar{\mathcal{O}}\times\mathbb{S} \times \Act \to [0,\infty)$ is the discount function, and 
$h:\bar{\mathcal{O}}\times\mathbb{S}\to \RR_+$ is the terminal cost function. The optimal value is defined as
    \begin{equation*}
         \hat{\cJ}^{*}_e(x,i)=\inf_{U\in \Uadm}\hat{\cJ}^{U}_e(x,i)
    \end{equation*}
    and a control $U^*\in \Uadm$ is said to be optimal if we have 
\begin{equation*}
     \hat{\cJ}^{U^*}_e(x,i)=\hat{\cJ}^{*}_e(x,i)=\inf_{U\in \Uadm}\hat{\cJ}^{U}_e(x,i)
\end{equation*}

\subsection{A topology on Control Policies}\label{top-sec}
\begin{definition}(\textit{Markov control}:)
    An admissible control is called a \emph{Markov control} if it is of the form \(U_t\,=\, v(t, X_t, S_t) \), for some Borel measurable function $v : \RR_+ \times \Rd \times \mathbb{S} \to  \pV$.\\ We  denote $\Um$ as the space of all Markov controls.
\end{definition}
\begin{definition}
(\emph{Stationary and Stable Stationary Markov Controls:})
 If the function $v$ in the above definition is independent of $t$, then $U$, or by an abuse of notation $v$ itself, is called a \emph{stationary Markov control}.
 We denote the set of all such controls by \( \Usm \).\\
A stationary Markov control \(v \in \Uadm_{\mathrm{sm}}\) is said to be \emph{stable} if the 
corresponding controlled RSDP \((X_t, S_t)\) is positive recurrent.
The set of all stable stationary Markov controls is denoted by \( \Uadm_{\mathrm{ssm}} \subset \Usm \).
\end{definition}

The hypotheses in \hyperlink{A1}{(A1)}--\hyperlink{A3}{(A3)} also imply the existence of unique strong solutions under Markov controls, which is a strong Feller (therefore strong Markov) process (see \cite{AGM93}*{Theorem 2.1} and \cite{ABG-book}*{Theorem 5.2.9}).
 From \cite{AGM93}*{Section 3}, we have that the set $\Usm$ is metrizable with compact metric with the following topology: A sequence $v_n\to v$ in $\Usm$ if and only if
\begin{equation*}
\lim_{n\to\infty}\int_{\Rd}f(x,i)\int_{\Act}g(x,i,\cdot)v_{n}(x,i)(\D \zeta)\D x = \int_{\Rd}f(x,i)\int_{\Act}g(x,i,\cdot)v(x,i)(\D \zeta)\D x
\end{equation*}
for all $f\in \Lp^1(\Rd\times \mathbb{S})\cap \Lp^2(\Rd\times \mathbb{S})$, $g\in \cC_b(\Rd\times \mathbb{S} \times  \Act)$ and $i\in \mathbb{S}$ (for more details, see \cite{AGM93}*{Lemma~3.2})\,.
Similarly, in view of \cite{Bor89}, from \cite{SPSY}*{Definition 2.2} we say a sequence $v_n \to v$ in $\mathcal{\Um}$ if and only if
\begin{align*}
\lim_{n \to \infty} 
&\int_0^\infty \int_{\Rd} f(t,x,i) 
\Bigg( \int_{\Act} g(t,x,i,\zeta)\, v_n(t,x,i)(d\zeta) \Bigg) dx\,dt\nonumber\\
&=
\int_0^\infty \int_{\Rd} f(t,x,i) 
\Bigg( \int_{\Act} g(t,x,i,\zeta)\, v(t,x,i)(d\zeta) \Bigg) dx\,dt,
\end{align*}
for all 
$i\in\mathbb{S,}\,\, f \in L^1([0,\infty)\times\Rd \times\mathbb{S}) \cap L^2([0,\infty)\times\Rd \times\mathbb{S} ),\,\, g \in \cC_b([0,\infty)\times\Rd\times\mathbb{S} \times \Act).$

\vspace{0.5em}
We define a family of operators $\sL_{\zeta}$ mapping
$\cC^2(\Rd\times \mathbb{S})$ to $\cC(\Rd\times \mathbb{S})$ by
\begin{equation*}\label{E-cI}
\sL_{\zeta} f(x,i) \,\df\, \trace\bigl(a(x,i)\grad^2 f(x,i)\bigr) + \,b(x,i,\zeta)\cdot \grad f(x,i)\,+\, \sum_{j \in \mathbb{S}} m_{ij}(x,\zeta) f(x,j)\,, 
\end{equation*}
for $\zeta\in\Act$, $f\in \cC^2(\Rd\times \mathbb{S})$\,.
For $\pv \in\pV$ we extend $\sL_{\zeta}$ as follows:
\begin{equation*}\label{EExI}
\sL_\pv f(x,i) \,\df\, \int_{\Act} \sL_{\zeta} f(x,i)\pv(\D \zeta)\,.
\end{equation*} For $v \in\Usm$, we define
\begin{equation*}\label{Efixstra}
\sL_{v} f(x,i) \,\df\, \trace(a(x,i)\grad^2 f(x,i)) + b(x,i,v(x,i))\cdot\grad f(x,i)\,+\, \sum_{j \in \mathbb{S}} m_{ij}(x,v(x,i)) f(x,j)\, .
\end{equation*}

\subsection{Problem Studied}\label{CRPB}
In this work, our primary objective is to address the following fundamental questions:
\begin{itemize}
    \item \textbf{Continuity of finite and infinite horizon costs.}  
    Suppose $v_n\in \Usm$ is a sequence of Markov controls such that $v_n\to v\in \Usm$ in topology defined in \cref{top-sec} (Borkar topology). 
    Does this imply convergence of the corresponding cost functions, namely:
    \begin{itemize}
        \item[•] \emph{Discounted cost:} $\cJ_{\alpha}^{v_n}(x,i,c) \;\to\; \cJ_{\alpha}^{v}(x,i,c)$ ? 
        \item[•] \emph{Ergodic cost:} $\mathcal{E}_{x,i}(c, v_n) \;\to\; \mathcal{E}_{x,i}(c,v)$ ? 
        \item[•] \emph{Finite-horizon cost:} $\mathcal{J}_{T}^{v_n}(x,i,c) \;\to\; \mathcal{J}_{T}^v(x,i,c)$ ? 
        \item[•] \emph{Exit-time cost:} $\hat{\mathcal{J}}_{e}^{v_n}(x,i,c) \;\to\; \hat{\mathcal{J}}_{e}^v(x,i)$ ?
    \end{itemize}

    \item \textbf{Near Optimality of smooth and quantized policies.}  
    For given $\varepsilon>0$, does there exist a policy $v_\varepsilon$ which is smooth (Lipschitz) or quantized (finite action/ piecewise constant)  such that it is near optimal? i.e,
    \begin{itemize}
        \item[•] \emph{Discounted cost:} $\cJ_{\alpha}^{v_\varepsilon}(x,i,c) \leq \cJ_{\alpha}^{v}(x,i,c)+\varepsilon$ ? 
        \item[•] \emph{Ergodic cost:} $\mathcal{E}_{x,i}(c, v_\varepsilon) \leq \mathcal{E}_{x,i}(c,v)+\varepsilon$ ? 
        \item[•] \emph{Finite-horizon cost:} $\mathcal{J}_{T}^{v_\varepsilon}(x,i,c) \leq \mathcal{J}_{T}^v(x,i,c)+\varepsilon$ ? 
        \item[•] \emph{Exit-time cost:} $\hat{\mathcal{J}}_{e}^{v_\varepsilon}(x,i,c) \leq \hat{\mathcal{J}}_{e}^v(x,i)+\varepsilon$ ?
    \end{itemize}
    \item \textbf{Approximation of optimal policies.}  
If the original system is discretized (in time or state), do the optimal policies 
obtained from the discretized models yield vanishing performance loss for the true system? 
In particular, for the finite-horizon cost, if $v^h$ and $v^n$ denote the optimal policies 
for the time- and state-discretized models, respectively, do we have
\begin{itemize}
    \item[•] Time discretization:
    \(
    \cJ_T^{v^h}(x,i) \to \cJ_T^*(x,i)
    \quad \text{as } h \to 0\, ?
    \)
    
    \item[•] State discretization:
    \(
    \cJ_T^{v^n}(x,i) \to \cJ_T^*(x,i)
    \quad \text{as } n \to \infty \, ?
    \)
\end{itemize}
\end{itemize}
Now we introduce the notations that will be used throughout the rest of the article.
\subsection*{Notation:}
\begin{itemize}
\item For any set $A\subset\Rd$, by $\uptau(A)$ we denote \emph{first exit time} of the process $(X_{t},S_t)$ from the set $A\subset\Rd$, defined by
\begin{equation*}
\uptau(A) \,\df\, \inf\,\{t>0\,\colon (X_{t},S_t)\not\in A\times \mathbb{S}\}\,.
\end{equation*}
\item $\sB_{r}$ denotes the open ball of radius $r$ in $\Rd$, centered at the origin, and $\sB_{r}^c$ denotes the complement of $\sB_{r}$ in $\Rd$\,.
\item $\uptau_{r}$, $\uuptau_{r}$ denote the first exit time from $\sB_{r}$, $\sB_{r}^c$ respectively, i.e., $\uptau_{r}\df \uptau(\sB_{r})$, and $\uuptau_{r}\df \uptau(\sB^{c}_{r})$.
\item By $\trace A$ we denote the trace of a square matrix $A$.
\item For any domain $\cD\subset\Rd$, the space $\cC^{k}(\cD)$ ($\cC^{\infty}(\cD)$), $k\ge 0$, denotes the class of all real-valued functions on $\cD$ whose partial derivatives up to and including order $k$ (of any order) exist and are continuous.
\item $\cC_{\mathrm{c}}^k(\cD)$ denotes the subset of $\cC^{k}(\cD)$, $0\le k\le \infty$, consisting of functions that have compact support. This denotes the space of test functions.
\item $\cC_{b}(\Rd)$ denotes the class of bounded continuous functions on $\Rd$\,.
\item $\cC^{k}_{0}(\cD)$ denotes the subspace of $\cC^{k}(\cD)$, $0\le k < \infty$, consisting of functions that vanish in $\cD^c$.
\item $\cC^{k,r}(\cD)$ denotes the class of functions whose partial derivatives up to order $k$ are H\"older continuous of order $r$.
\item $\Lp^{p}(\cD)$, $p\in[1,\infty)$ denotes the Banach space
of (equivalence classes of) measurable functions $f$ satisfying
$\int_{\cD} \abs{f(x)}^{p}\,\D{x}<\infty$.
\item $\Sob^{k,p}(\cD)$, $k\ge0$, $p\ge1$ denotes the standard Sobolev space of functions on $\cD$ whose weak derivatives up to order $k$ are in $\Lp^{p}(\cD)$, equipped with its natural norm (see, \cite{Adams})\,.
\item  If $\mathcal{X}(Q)$ is a space of real-valued functions on $Q$, $\mathcal{X}_{\mathrm{loc}}(Q)$ consists of all functions $f$ such that $f\varphi\in\mathcal{X}(Q)$ for every $\varphi\in\cC_{\mathrm{c}}^{\infty}(Q)$. In a similar fashion, we define $\Sobl^{k, p}(\cD)$. 


\item We also adopt the notation \( \mathcal{X}(Q \times\mathbb{S}) \) to indicate the product space \( (\mathcal{X}(Q))^N \), where \( N \) is the cardinality of \( \mathbb{S} \). The corresponding norm on \( \mathcal{X}(Q \times\mathbb{S}) \) is defined by
\[
\|f\|_{\mathcal{X}(Q \times\mathbb{S})} := \sum_{k \in\mathbb{S}} \|f_k\|_{\mathcal{X}(Q)}
\]

Let \( f \in \cC(\Rd \times\mathbb{S}) \), then, by \( f \gg 0 \), we mean that \( f_k > 0 \) for all \( k \in\mathbb{S} \).

\item If $h \in \cC(\Rd \times \mathbb{S} \times  \Act)$ with $h>0$, $\sorder(h)$ denotes the set of functions $f \in \cC(\Rd \times \mathbb{S} \times  \Act)$ having the property 
\begin{equation*}
\limsup_{|x| \to \infty} \sup_{\zeta\in  \Act} \sup_{i \in \mathbb{S}} \frac{|f(x, i, \zeta)|}{h(x, i, \zeta)} = 0
\end{equation*}


\end{itemize}

\section{ Continuity of the cost functions}\label{continuity}
In this section, we show that the cost functions introduced in \cref{Cost} are continuous with respect to the control policies.
\subsection{Continuity of discounted cost}\hspace{50em}
\vspace{0.5em}

The following theorem proves the continuity of the $\alpha$-discounted cost with respect to the control policies.
\begin{theorem}\label{TH2.1}
Suppose Assumptions \hyperlink{A1}{(A1)}–\hyperlink{A3}{(A3)} and \hyperlink{A5}{(A5)} hold. Then the map $v \mapsto \cJ_{\alpha}^{v}(x,i,c)$ from $\Usm$ to $\RR$ is continuous.
\end{theorem}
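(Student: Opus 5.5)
We will use the standard PDE route: characterize $\cJ_{\alpha}^{v}$ as the unique bounded solution of a linear coupled elliptic system, obtain estimates that do not depend on $v$, and pass to the limit along a convergent sequence. Since $\Usm$ is metrizable, it is enough to treat sequences. So let $v_n\to v$ in $\Usm$.

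\textbf{Step 1.} For each $n$, the function $\phi_n(x,i):=\cJ_{\alpha}^{v_n}(x,i,c)$ is the unique solution in $\Sobl^{2,p}(\Rd\times\mathbb{S})\cap L^\infty(\Rd\times\mathbb{S})$, $p\ge d+1$, of
\begin{equation*}
\sL_{v_n}\phi_n(x,i) - \alpha\,\phi_n(x,i) + c(x,i,v_n(x,i)) = 0 .
\end{equation*}
We would get this from the weakly coupled elliptic theory for regime-switching diffusions (e.g.\ \cite{AGM93}, \cite{ABG-book}). Concretely, we solve the Dirichlet problem on $\sB_R$ with zero boundary data, identify the solution with the cost truncated at $\uptau_R$ via the It\^o--Krylov formula, and let $R\to\infty$. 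The bound $\|c\|_\infty\le M$ gives $\|\phi_n\|_\infty\le M/\alpha$ uniformly in $n$.

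\textbf{Step 2 (uniform estimates).} Fix $R>0$. We move the coupling term to the right-hand side:
\begin{equation*}
\trace(a\grad^2\phi_{n,i}) + b(\cdot,i,v_n)\cdot\grad\phi_{n,i} + (m_{ii}-\alpha)\phi_{n,i} = -c(\cdot,i,v_n) - \sum_{j\ne i} m_{ij}\phi_{n,j}.
\end{equation*}
Here $a$ is uniformly elliptic on $\sB_{2R}$ by (A3), $b$ is locally bounded, $m_{ij}$ is bounded, and the right-hand side is bounded by a constant independent of $n$. Interior $L^p$ estimates (Gilbarg--Trudinger, Thm.\ 9.11) then give
\begin{equation*}
\|\phi_n\|_{\Sob^{2,p}(\sB_R\times\mathbb{S})}\le C_R\bigl(\|\phi_n\|_{L^p(\sB_{2R}\times\mathbb{S})}+1\bigr)\le C'_R .
\end{equation*}
By weak compactness and the compact embedding $\Sob^{2,p}\hookrightarrow \cC^{1,r}$ with $p>d$, we can extract a subsequence along which $\phi_n\to\phi$ weakly in $\Sob^{2,p}(\sB_R)$ and strongly in $\cC^{1}(\bar\sB_R)$. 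A diagonal argument over $R\in\NN$ makes this hold for every $R$.

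\textbf{Step 3 (passing to the limit — main obstacle).} The controls $v_n$ converge only in the weak Borkar sense, so the nonlinear terms $b(x,i,v_n)\cdot\grad\phi_n$, $\sum_j m_{ij}(x,v_n)\phi_n(x,j)$ and $c(x,i,v_n)$ need care. We test the equation against $f\in\cC_c^\infty(\sB_R)$ and split each term as
\begin{equation*}
b(x,i,v_n)\cdot\grad\phi_n = b(x,i,v_n)\cdot(\grad\phi_n-\grad\phi) + b(x,i,v_n)\cdot\grad\phi ,
\end{equation*}
and similarly for the coupling and cost terms.
\begin{itemize}
\item The first piece tends to $0$ because $\grad\phi_n\to\grad\phi$ uniformly on $\sB_R$ and $b$ is bounded there.
\item For the second piece, $\int f\,\grad\phi\cdot\int_{\Act} b\,v_n(\D\zeta)\,\D x\to\int f\,\grad\phi\cdot b(x,i,v)\,\D x$ by the definition of the topology. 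Here we take $f\,\partial_k\phi\in L^1\cap L^2$ and $g=b_k$, using a cutoff to make $g$ bounded continuous.
\item The term $\int f\,\trace(a\grad^2\phi_n)$ converges by weak $\Sob^{2,p}$ convergence.
\end{itemize}
It follows that $\phi\in\Sobl^{2,p}\cap L^\infty$, with $\|\phi\|_\infty\le M/\alpha$, solves $\sL_v\phi-\alpha\phi+c(\cdot,\cdot,v)=0$ a.e.

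\textbf{Step 4 (identification and conclusion).} By the uniqueness part of Step 1 (It\^o--Krylov with $e^{-\alpha t}$ and boundedness of $\phi$), $\phi=\cJ_\alpha^{v}$. Since the limit does not depend on the subsequence, the whole sequence converges, locally uniformly in $x$ and in particular pointwise. This gives $\cJ_\alpha^{v_n}(x,i,c)\to\cJ_\alpha^{v}(x,i,c)$. The delicate point is Step 3: it requires strong $\cC^1$ convergence of $\phi_n$, obtained from $p>d$, combined with the Borkar topology to handle the product of a weakly converging control with the gradient.
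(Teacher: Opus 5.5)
Your proposal is correct and follows essentially the same route as the paper: uniform interior $\Sob^{2,p}$ bounds from Gilbarg--Trudinger 9.11 with the coupling terms moved to the right-hand side, compactness in $\cC^{1,\beta}_{loc}$ via a diagonal argument, splitting the control-dependent drift, coupling and cost terms to pass to the limit using the Borkar topology, and identifying the limit with $\cJ_{\alpha}^{v}$ by It\^o--Krylov together with a subsequence argument. The differences are minor: you construct the solution through Dirichlet problems on balls rather than citing an existence theorem, and you make explicit two details the paper leaves implicit (a cutoff to make $b$ bounded, and localization in the It\^o--Krylov step).
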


\begin{proof}
Let $v_n$ be a sequence in $\Usm$ such that $v_n\to v$ in $\Usm$. From \cite{AS}*{Theorem 2.1} for each $n\in \NN$ there exists a unique solution \(\cJ^{v_n}_{\alpha}\in \Sobl^{2,p}(\Rd\times\mathbb{S})\) to the following Poisson equation
\begin{equation}\label{ETC1.4B}
\sL_{v_n}\varphi(x,i,c)+c(x,i,v_n(x,i))=\alpha\varphi(x,i,c)
 \end{equation}
Rewriting the above equation, we have 
\begin{align*}
&\trace\bigl(a(x,i)\grad^2 \cJ^{v_n}_\alpha(x,i,c)\bigr) + b(x,i,v_n(x,i))\cdot \grad \cJ^{v_n}_\alpha(x,i,c)+ (m_{ii}(x,v_n(x,i))-\alpha) \cJ^{v_n}_\alpha(x,i,c)\\& =  f(x,i)\,,\quad \text{a.e.}\,\, (x,i)\in\Rd\times \mathbb{S}\,,
\end{align*} where 
\begin{equation*}
f(x,i) = -[ c(x,i,v_n(x,i)) + \sum_{j \neq i} m_{ij}(x,v_n(x,i)) \cJ_{\alpha}^{v_n}(x,j,c)  ]\,.
\end{equation*}
Then using the standard elliptic PDE estimate as in \cite{GilTru}*{Theorem~9.11},  for any $p\geq d+1$ and $R >0$, we deduce that
\begin{equation}\label{ETC1.3A}
\norm{\cJ^{v_n}_\alpha(x,i,c)}_{\Sob^{2,p}(\sB_R)}
\,\le\, \kappa_1\bigl(\norm{\cJ^{v_n}_\alpha(x,i,c)}_{\Lp^p(\sB_{2R})} + \norm{f(x,i)}_{L^p(\sB_{2R})}\bigr)\,,
\end{equation}
where $\kappa_1$ is a positive constant which is independent of $n$\,.

Since 
\begin{align*}
&\norm{c(x,i,\zeta)}_{\infty} \,\df\, \sup_{(x,i, \zeta)\in\Rd\times \mathbb{S}\times\Act} c(x,i, \zeta) \leq M, \quad  \cJ_{\alpha}^{v_n}(x,i) \leq \frac{\norm{c(x,i, \zeta)}_{\infty}}{\alpha}\,,\\
&\qquad\qquad\text{and}\quad \norm{m_{ij}(x, \zeta)}_{\infty} \,\df\, \sup_{(x,\zeta)\in\Rd\times\Act} |m_{ij}(x,\zeta)| \leq M,
\end{align*}
we have,
\begin{align}\label{fb}
\norm{f(x,i)}_{L^p(\sB_{2R})}
&\leq \norm{c(x,i, \zeta)}_{L^p(\sB_{2R})} +\norm{\sum_{j\neq i}m_{ij}(x,v_n(x,i))\cJ_{\alpha}^{v_n}(x,j)}_{L^p(\sB_{2R})}\nonumber\\
&\leq \norm{c(x,i, \zeta)}_{\infty}|\sB_{2R}|^{\frac{1}{p}}\,+\,\norm{\sum_{j\neq i}m_{ij}(x,v_n(x,i))}_{\infty}\norm{\cJ_{\alpha}^{v_n}(x,j)}_{\infty}|\sB_{2R}|^{\frac{1}{p}}\nonumber\\
&\leq M|\sB_{2R}|^{\frac{1}{p}}\,+\,|\mathbb{S}|M.\frac{M}{\alpha}|\sB_{2R}|^{\frac{1}{p}}\nonumber\\
&\leq  M|\sB_{2R}|^{\frac{1}{p}}\bigl( 1+ \frac{|\mathbb{S}|M}{\alpha}\bigr)
\end{align}
from \cref{ETC1.3A} and \cref{fb}, we get
\begin{equation}\label{ETC1.3B}
\norm{\cJ_{\alpha}^{v_n}(x,i)}_{\Sob^{2,p}(\sB_R\times \mathbb{S})}=\sum_{i \in \mathbb{S}}\norm{\cJ_{\alpha}^{v_n}(x,i)}_{\Sob^{2,p}(\sB_R)}
\,\le\,\kappa_1 |\mathbb{S}| M |\sB_{2R}|^{\frac{1}{p}}\bigl(1+ \frac{M|\mathbb{S}|+1}{\alpha}\bigr)\,.
\end{equation}

We know that for $1< p < \infty$, the space \( \Sob^{2,p}(\cB_R \times \mathbb{S}) \) is reflexive and separable for \( 1 < p < \infty \); hence, as a corollary of the Banach Alaoglu theorem, we have that every bounded sequence in $\Sob^{2,p}(\sB_R\times \mathbb{S})$ has a weakly convergent subsequence (see, \cite{HB-book}*{Theorem~3.18}). Also, we know that for $p\geq d+1$ the space $\Sob^{2,p}(\sB_R)$ is compactly embedded in $\cC^{1, \beta}(\bar{\sB}_R)$\,, where $\beta < 1 - \frac{d}{p}$ (see \cite{ABG-book}*{Theorem~A.2.15 (2b)}). Since $\mathbb{S}$ is finite, the space $\Sob^{2,p}(\cB_R \times \mathbb{S})$ can be thought of as a finite product of such Sobolev spaces, one for each $i \in \mathbb{S}$. Therefore, the embedding $
\Sob^{2,p}(\cB_R \times \mathbb{S}) \hookrightarrow \cC^{1,\beta}(\cB_R \times \mathbb{S})$
is compact, which implies that every weakly convergent sequence in $\Sob^{2,p}(\sB_R\times \mathbb{S})$ will converge strongly in $\cC^{1, \beta}(\bar{\sB}_R\times \mathbb{S})$\,. Thus, in view of estimate \cref{ETC1.3B}, by standard diagonalization argument and Banach Alaoglu theorem, we can extract a subsequence $\{\cJ_{\alpha}^{v_{n_k}}\}$ such that for some $V_{\alpha}^*\in \Sobl^{2,p}(\Rd\times \mathbb{S})$
\begin{equation}\label{ETC1.3BC}
\begin{cases}
\cJ_{\alpha}^{v_{n_k}}\to & V_{\alpha}^*\quad \text{in}\quad \Sobl^{2,p}(\Rd\times \mathbb{S})\quad\text{(weakly)}\\
\cJ_{\alpha}^{v_{n_k}}\to & V_{\alpha}^*\quad \text{in}\quad \cC^{1, \beta}_{loc}(\Rd\times \mathbb{S}) \quad\text{(strongly)}\,.
\end{cases}       
\end{equation} 
Next, we will show that $V^*_{\alpha} = \cJ_{\alpha}^{v}$. Note that
\begin{align*}
&b(x,i,v_{n_k}(x,i))\cdot \grad \cJ_{\alpha}^{v_{n_k}}(x,i) - b(x,i,{v}(x,i))\cdot \grad {V}_{\alpha}^*(x,i) \\
&\qquad= b(x,i,v_{n_k}(x,i))\cdot \grad \left(\cJ_{\alpha}^{v_{n_k}} - {V}_{\alpha}^*\right)(x,i) + \left(b(x,i,v_{n_k}(x,i)) - b(x,i,{v}(x,i))\right)\cdot \grad {V}_{\alpha}^*(x,i)\,.
\end{align*}
Also,
\begin{align*}
    &\sum_{j \in \mathbb{S}} m_{ij}(x,v_{n_{k}}(x,i))\,\cJ_{\alpha}^{v_{n_k}}(x,j)\,-\, \sum_{j \in \mathbb{S}} m_{ij}(x,{v}(x,i))\,{V}^*_{\alpha}(x,j)\\&=\,\sum_{j \in \mathbb{S}} m_{ij}(x,v_{n_{k}}(x,i))\,(\cJ_{\alpha}^{v_{n_k}}(x,j)\,-\,{V}_{\alpha}^{*}(x,j))+ \sum_{j \in \mathbb{S}} (m_{ij}(x,v_{n_{k}}(x,i))-m_{ij}(x,{v}(x,i)))\,{V}_{\alpha}^{*}(x,j). 
\end{align*}
Since $\cJ_{\alpha}^{v_{n_k}}\to {V}_{\alpha}^*$ in $\cC^{1, \beta}_{loc}(\Rd\times \mathbb{S}),$ it follows that on every compact set $\left(b(x,i,v_{n_k}(x,i))\right)\cdot \grad \left(\cJ_{\alpha}^{v_{n_k}} - {V}_{\alpha}^*\right)(x,i)\to 0$ and $m_{ij}(x,v_{n_{k}}(x,i))\,(\cJ_{\alpha}^{v_{n_k}}(x,j)\,-\,{V}_{\alpha}^{*}(x,j))\to 0$\, strongly (since $m_{ij}$'s are bounded). Moreover, by the topology of $\Usm$, we have \begin{align*}
\left(b(x,i,v_{n_k}(x,i)) - b(x,i,{v}(x,i))\right)\cdot \grad {V}_{\alpha}^*(x,i)\to 0\,\quad\, \quad&\text{weakly}\\
\sum_{j \in \mathbb{S}} m_{ij}(x,v_{n_{k}}(x,i))\,\cJ_{\alpha}^{v_{n_k}}(x,j)\,-\, \sum_{j \in \mathbb{S}} m_{ij}(x,{v}(x,i))\,V_{\alpha}^*(x,j)\to 0\, \quad&\text{weakly}
\end{align*}
Thus, in view of the topology of $\Usm$, and the convergence $\cJ_{\alpha}^{v_{n_k}}\to {V}_{\alpha}^*$ in $\cC^{1, \beta}_{loc}(\Rd\times \mathbb{S})\,,$ as $k\to \infty$ we obtain 
\begin{align}\label{ETC1.4EA}
b(x,i,v_{n_k}(x,i))\cdot \grad \cJ_{\alpha}^{v_{n_k}}(x,i) + c(x,i,v_{n_k}(x,i))\,+\,\sum_{j \in \mathbb{S}} m_{ij}(x,v_{n_{k}}(x,i))\,\cJ_{\alpha}^{v_{n_k}}(x,j)\,\nonumber\\ \to b(x,i,{v}(x,i))\cdot \grad {V}_{\alpha}^*(x,i) + c(x,i,{v}(x,i))\,+\,\sum_{j \in \mathbb{S}} m_{ij}(x,{v}(x,i))\,{V}_{\alpha}^{*}(x,j)\quad\text{weakly}\,.
\end{align}

Now, multiplying \cref{ETC1.4B} by a test function $\phi\in \cC_{c}^{\infty}(\Rd\times \mathbb{S})$ and integrating over \(\Rd\), we obtain
\begin{align*}
&\int_{\Rd}\trace\bigl(a(x,i)\grad^2 \cJ_{\alpha}^{v_{n_k}}(x,i)\bigr)\phi(x,i)\D x + \int_{\Rd}\{b(x,i,v_{n_k}(x,i))\cdot \grad \cJ_{\alpha}^{v_{n_k}}(x,i) +  c(x,i,v_{n_k}(x,i))\\
&+\,\sum_{j \in \mathbb{S}} m_{ij}(x,v_{n_{k}}(x,i))\,\cJ_{\alpha}^{v_{n_k}}(x,j)\}\phi(x,i)\D x 
 = \alpha\int_{\Rd} \cJ_{\alpha}^{v_{n_k}}(x,i)\phi(x,i)\D x\,.
\end{align*}
Hence, by \cref{ETC1.3BC}, \cref{ETC1.4EA}, and letting $k\to\infty$, we obtain
\begin{align}\label{ETC1.4EB}
&\int_{\Rd}\trace\bigl(a(x,i)\grad^2 {V}_{\alpha}^*(x,i)\bigr)\phi(x,i)\D x + \int_{\Rd} \{b(x,i,{v}(x,i))\cdot \grad {V}_{\alpha}^*(x,i) + c(x,i,{v}(x,i))\nonumber\\
&+\,\sum_{j \in \mathbb{S}} m_{ij}(x,{v}(x,i))\,{V}_{\alpha}^{*}(x,j)\}\phi(x,i)\D x\,=\, \alpha\int_{\Rd} {V}_{\alpha}^*(x,i)\phi(x,i)\D x\,.
\end{align}
Since $\phi\in \cC_{c}^{\infty}(\Rd\times \mathbb{S})$ is arbitrary and ${V}_{\alpha}^*\in \Sobl^{2,p}(\Rd\times \mathbb{S})$, it follows from \cref{ETC1.4EB} that 
the function ${V}_{\alpha}^*\in \Sobl^{2,p}(\Rd\times \mathbb{S})\cap \cC_{b}(\Rd\times \mathbb{S})$ satisfies
\begin{align}\label{ETC1.4F}
&\trace\bigl(a(x,i)\grad^2 {V}_{\alpha}^{*}(x,i)\bigr) + b(x,i,{v}(x,i))\cdot \grad {V}_{\alpha}^{*}(x,i) + c(x,i,{v}(x,i))+\,\sum_{j \in \mathbb{S}} m_{ij}(x,{v}(x,i))\,{V}_{\alpha}^{*}(x,j)\nonumber\\ &= \alpha {V}_{\alpha}^{*}(x,i)\,
\end{align}

Let $(\tilde{X}, \tilde{S})$ be the solution of the SDE \cref{E1.1} corresponding to $v$. Then, by the It${\rm o}$–Krylov formula (\cite{ABG-book}*{Lemma~5.1.4}), we deduce the following.
\begin{align*}
&\Exp_{x,i}^{v}\left[ e^{-\alpha T} V_{\alpha}^{*}(\tilde{X}_{T},\tilde{S}_{T})\right] - V_{\alpha}^{*}(x,i)\\
&\,=\,\Exp_{x,i}^{v}\Bigg[\int_0^{T} e^{-\alpha s}\{\trace\bigl(a(\tilde{X}_s,\tilde{S}_s)\grad^2 V_{\alpha}^{*}(\tilde{X}_s,\tilde{S}_s)\bigr) + b(\tilde{X}_s,\tilde{S}_s, v(\tilde{X}_s,\tilde{S}_s))\cdot \grad V_{\alpha}^{*}(\tilde{X}_s,\tilde{S}_s)\\
& \quad+ \sum_{j \in \mathbb{S}} m_{\tilde{S}_sj}(\tilde{X}_s,v(\tilde{X}_s,\tilde{S}_s))\,V_{\alpha}^*(\tilde{X}_s,j) - \alpha V_{\alpha}^{*}(\tilde{X}_s,\tilde{S}_s)\} \D{s}\Bigg]
\end{align*}

Hence, using \cref{ETC1.4F} and rewriting the above equation, we obtain,
\begin{align}\label{ETC1.3FC}
e^{-\alpha T}\Exp_{x,i}^{v}\left[  V_{\alpha}^{*}(\tilde{X}_T,\tilde{S}_{T})\right] - V_{\alpha}^{*}(x,i) \,=\,- \Exp_{x,i}^{v}\left[\int_0^{T} e^{-\alpha s}c(\tilde{X}_s,\tilde{S}_s, v(\tilde{X}_s,\tilde{S}_s))\D{s}\right] \,.
\end{align}
Since $V_{\alpha}^{*}$ is bounded, 
it follows that 
\(
e^{-\alpha T}\Exp_{x,i}^{v}\left[  V_{\alpha}^{*}(\tilde{X}_T,\tilde{S}_{T})\right] \to 0
\)  as $T\to\infty$.
Now, by monotone convergence theorem and letting $T \to \infty$ in \cref{ETC1.3FC} we obtain,
\begin{align}\label{ETC1.3FD}
 V_{\alpha}^{*}(x,i) \,=\, \Exp_{x,i}^{v}\left[\int_0^{\infty} e^{-\alpha s}c(\tilde{X}_s,\tilde{S}_s, v(\tilde{X}_s,\tilde{S}_s)) \D{s}\right]=\cJ_{\alpha}^v(x,i,c) \,
\end{align}
Since every subsequence of $\{\cJ_{\alpha}^{v_n}\}$ admits a further subsequence converging to the unique solution $\cJ_{\alpha}^{v}$, every convergent subsequence has the same limit. Hence the entire sequence $\{\cJ_{\alpha}^{v_n}\}$ converges to $\cJ_{\alpha}^{v}$.
 This completes the proof.
\end{proof}

\subsection{Continuity of the ergodic cost function}\hspace{50em}
\vspace{0.5em}

We now consider the ergodic (long-run average) cost problem for the controlled 
RSDP model introduced in \cref{Cost}.

Throughout this subsection, the analysis is carried out under a Lyapunov stability condition.

\subsubsection{Under Lyapunov stability}\label{Lyapunov stability}

We impose the following Foster--Lyapunov condition on the dynamics.
\begin{itemize}
\item[\hypertarget{A6}{(A6)}]
There exists a positive constant $\widehat{C}_0$, and a pair of inf-compact  functions $(\Lyap, h)\in \cC^{2}(\Rd\times \mathbb{S})\times\cC(\Rd\times \mathbb{S} \times \Act)$ (i.e., the sub-level sets $\{\Lyap(\cdot,i)\leq k\} \,,\{h(\cdot,i,\cdot)\leq k\}$ are compact or empty sets in $\Rd$\,, $\Rd \times \Act$ respectively for each $k\in\RR,\,i\in \mathbb{S}$) such that
\begin{equation}\label{Lyap1}
\sL_{\zeta}\Lyap(x,i) \leq \widehat{C}_{0} - h(x,i, \zeta)\quad \forall\,\,\, (x,i,\zeta)\in \Rd\times \Act\,,
\end{equation} where $h$ is locally Lipschitz continuous in its first argument, uniformly with respect to the remaining variables.

\end{itemize} 

By \cite{rathia}*{Theorem 4.4}, we obtain existence and uniqueness of solutions to the Poisson equation associated with any fixed stationary Markov control.

\begin{theorem}\label{TErgoExisPoiss1}
Suppose that assumptions \hyperlink{A1}{(A1)}- \hyperlink{A6}{(A6)} hold. Then, for each $v\in \Usm$ there exists a unique  pair $(V^v, \rho^{v})\in \Sobl^{2,p}(\Rd\times \mathbb{S})\cap \sorder(\Lyap)\times \RR$ for any $p >1$ satisfying
\begin{equation}\label{TErgoExisPoiss1A}
\rho^{v} = \sL_{v}V^v(x,i) + c(x,i, v(x,i))\quad\text{with}\quad V^v(0,1) = 0\,.
\end{equation}
Furthermore, the following hold:
\begin{itemize}
\item[(i)]$\rho^{v} = \sE_{x,i}(c, v)$
\item[(ii)] for all $(x,i)\in\Rd \times \mathbb{S}$, we have
\begin{equation}\label{TErgoExisPoiss1B}
V^v(x,i) \,=\, \lim_{r\downarrow 0}\Exp_{x,i}^{v}\left[\int_{0}^{\uuptau_{r}} \left( c(X_t,S_t, v(X_t,S_t)) - \sE_{x,i}(c, v)\right)\D t\right]\,.
\end{equation}
\end{itemize} 
\end{theorem}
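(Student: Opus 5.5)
Fix $v\in\Usm$. I would obtain the pair $(V^v,\rho^v)$ by the vanishing discount method and then identify $\rho^v$ and $V^v$ probabilistically.

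\emph{Step 1 (stability and uniform bounds).} From \cref{Lyap1} with $\zeta=v(x,i)$ and Dynkin's formula (It\^o–Krylov), we get
\[
\Exp^v_{x,i}\Big[\int_0^T h\big(X_s,S_s,v(X_s,S_s)\big)\,\D s\Big]\le \widehat C_0T+\Lyap(x,i).
\]
Since $h$ is inf-compact, this gives tightness of the mean empirical measures. With irreducibility (A4) and nondegeneracy (A3), it yields a unique invariant measure $\mu_v$ with $\int h\,\D\mu_v\le\widehat C_0$. In particular $v\in\Uadm_{\mathrm{ssm}}$ for every $v$.

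Next take a closed ball $\sB_r$ and use the hitting times $\uuptau_r$. The Lyapunov function gives $\Exp^v_{x,i}[\uuptau_r]\le \kappa(1+\Lyap(x,i))$. Applying the same bound with $\Lyap$ replaced by a function in $\sorder(\Lyap)$ built from $h$ (the usual trick using that $h$ grows) gives the growth estimate in $\sorder(\Lyap)$.

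Set $\bar V_\alpha:=\cJ^v_\alpha-\cJ^v_\alpha(0,1)$. By the strong Markov property at $\uuptau_r$, $\bar V_\alpha$ is bounded by $\kappa(1+\Lyap)$ uniformly in $\alpha\in(0,1)$ on each compact set. Also $\alpha\cJ^v_\alpha(0,1)$ is bounded by $\norm{c}_\infty$.

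\emph{Step 2 (passage to the limit).} Write $\cJ^v_\alpha$ as the solution of \cref{ETC1.4B}. Then $\bar V_\alpha$ solves
\[
\sL_v\bar V_\alpha+c=\alpha\bar V_\alpha+\alpha\cJ^v_\alpha(0,1).
\]
Use the interior estimate of \cite{GilTru}*{Theorem~9.11} exactly as in the proof of \cref{TH2.1}, treating the off-diagonal coupling as a right-hand side. This gives $\Sob^{2,p}(\sB_R\times\mathbb{S})$ bounds uniform in $\alpha$, because the local $\Lp^\infty$ bounds from Step 1 replace the bound $\norm{c}_\infty/\alpha$.

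By compact embedding into $\cC^{1,\beta}$ and diagonalization, along $\alpha_k\downarrow0$ we get $\bar V_{\alpha_k}\to V^v$ and $\alpha_k\cJ^v_{\alpha_k}(0,1)\to\rho^v$. Passing to the limit in the weak formulation (simpler than in \cref{TH2.1}, since $v$ is fixed) gives \cref{TErgoExisPoiss1A} with $V^v(0,1)=0$. The growth bound passes to the limit, so $V^v\in\sorder(\Lyap)$.

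\emph{Step 3 (identification and uniqueness).} This is the main obstacle. Apply It\^o–Krylov to $V^v$ up to $T\wedge\uptau_R$ and let $R\to\infty$ to obtain
\[
\Exp[V^v(X_T,S_T)]-V^v(x,i)=\rho^vT-\Exp\Big[\int_0^T c\,\D s\Big].
\]
To conclude (i), we need $\frac1T\Exp[V^v(X_T,S_T)]\to0$. I would prove this from $V^v\in\sorder(\Lyap)$ together with $\frac1T\Exp[\Lyap(X_T,S_T)]$ bounded and the uniform integrability supplied by $h$. Dividing by $T$ then gives $\rho^v=\sE_{x,i}(c,v)$, and this limit is independent of $(x,i)$ by ergodicity.

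For (ii), apply It\^o–Krylov on $[0,\uuptau_r\wedge\uptau_R]$, let $R\to\infty$ using the $\sorder(\Lyap)$ growth and the bound on $\Exp[\uuptau_r]$, and then let $r\downarrow0$ using continuity of $V^v$ and $V^v(0,\cdot)$. The regime coordinate needs care here: hitting $\sB_r$ does not fix the regime, so one uses the irreducibility in (A4) and the normalization at $(0,1)$.

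Uniqueness follows from the same representation. If $(\tilde V,\tilde\rho)$ is another solution in $\sorder(\Lyap)$, then (i) forces $\tilde\rho=\rho^v$. The difference $\tilde V-V^v$ is then $\sL_v$-harmonic with $\sorder(\Lyap)$ growth, hence constant by the stochastic representation, and the constant is zero by the normalization. Alternatively, I would simply cite \cite{rathia}*{Theorem 4.4}, as the paper indicates.
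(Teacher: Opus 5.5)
The paper does not prove this statement; it only cites \cite{rathia}*{Theorem~4.4}. So your vanishing-discount reconstruction is the only actual argument to compare against. For existence, part (i) and uniqueness it is the standard route and can be completed, but you skip one ingredient. The strong Markov property at $\uuptau_r$ only reduces the $\alpha$-uniform bound on $\bar V_\alpha$ to an $\alpha$-uniform bound on $\sup_{\bar\sB_r\times\mathbb{S}}\abs{\cJ^v_\alpha-\cJ^v_\alpha(0,1)}$. Controlling that quantity, in particular across regimes, needs a Harnack inequality for the weakly coupled system, and this is where (A4) is genuinely used. Also, the $\sorder(\Lyap)$ growth comes from $\Exp^v_{x,i}[\uuptau_r]\in\sorder(\Lyap)$, which holds because $h\to\infty$. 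A bound of the form $\kappa(1+\Lyap)$ does not give it.

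\textbf{The genuine gap is part (ii).} Irreducibility and the normalization cannot repair it, because (ii) as literally stated is false. Your It\^o--Krylov step gives
\[
V^v(x,i)=\Exp^v_{x,i}\Big[\int_0^{\uuptau_r}\bigl(c(X_t,S_t,v(X_t,S_t))-\rho^v\bigr)\D t\Big]+\Exp^v_{x,i}\bigl[V^v(X_{\uuptau_r},S_{\uuptau_r})\bigr].
\]
As $r\downarrow0$, the last term differs by $o(1)$ from $\sum_{j\in\mathbb{S}}V^v(0,j)\,\mathbb{P}^v_{x,i}(S_{\uuptau_r}=j)$. The normalization removes only the $j=1$ term, and nothing forces $V^v(0,j)=0$ for $j\neq1$.

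Indeed, $\uuptau_r$ is the hitting time of $\sB_r\times\mathbb{S}$, so under $\mathbb{P}_{0,j}$ one has $\uuptau_r=0$. The formula in (ii) would therefore force $V^v(0,j)=0$ for every $j$. This fails in the following example:
\begin{itemize}
\item $d=1$, $b=-x$, $\upsigma=1$;
\item $N=2$ with $m_{12}=m_{21}=1$;
\item $c(x,i)=\mathbf{1}_{\{i=2\}}$;
\item $\Lyap(x,i)=1+\abs{x}^2$, $h=2\abs{x}^2$, $\widehat C_0=1$, so (A1)--(A6) all hold.
\end{itemize}
The unique solution is $\rho^v=\tfrac12$, $V^v(\cdot,1)\equiv0$, $V^v(\cdot,2)\equiv\tfrac12$.

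What your argument does prove is one of two corrected versions:
\begin{itemize}
\item the identity above with the boundary term kept; or
\item the formula in (ii) with $\uuptau_r$ replaced by $\hat\uptau_r\df\inf\{t>0: X_t\in\sB_r,\ S_t=1\}$. This time has finite mean under (A4) and (A6), and the boundary term becomes $\Exp^v_{x,i}[V^v(X_{\hat\uptau_r},1)]\to V^v(0,1)=0$.
\end{itemize}

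For the same reason, your uniqueness step must not go through (ii). Instead, let $w\df\tilde V-V^v$. It satisfies $w(x,i)=\Exp^v_{x,i}[w(X_{\uuptau_r},S_{\uuptau_r})]$ off $\sB_r$, so $w$ is bounded. Then $w(X_t,S_t)$ is a bounded martingale. Recurrence of every set $G\times\{j\}$ with $G$ open forces $w$ to be constant, and the normalization at $(0,1)$ makes that constant zero.
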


We now turn to the continuity result of the ergodic cost.
\begin{theorem}\label{ErgodLyapRobu1}
Suppose that Assumptions \hyperlink{A1}{(A1)}-\hyperlink{A6}{(A6)} hold. Then the map v $\mapsto \sE(v)$ from $\Usm$ to $\RR$ is continuous. i.e., for a sequence of policies \(\{v_n\}_n\) in \(\Usm\) satisfying \(v_n\to v\in\Usm\),
\begin{equation*}\label{ErgodLyapRobu1A}
\lim_{n\to\infty} \inf_{(x,i)\in\Rd\times \mathbb{S}}\sE_{x,i}(c, v_{n}) = \inf_{(x,i)\in\Rd\times \mathbb{S}}\sE_{x,i}(c, v)\,.
\end{equation*}
\end{theorem}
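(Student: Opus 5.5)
Since $\rho^{v}=\sE_{x,i}(c,v)$ does not depend on $(x,i)$ by \cref{TErgoExisPoiss1}(i), the statement is equivalent to $\rho^{v_n}\to\rho^{v}$. I would mimic the proof of \cref{TH2.1}, with the Poisson equation \cref{TErgoExisPoiss1A} in place of the discounted one. The first step is a priori bounds uniform in $n$. Boundedness of $c$ gives $0\le\rho^{v_n}\le M$, so along a subsequence $\rho^{v_{n_k}}\to\rho^*$. For $V^{v_n}$ I need a bound on every ball $\sB_R$ independent of $n$.

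To get it, I would use the representation \cref{TErgoExisPoiss1B}, together with uniform (in $v\in\Usm$) bounds on $\Exp^{v}_{x,i}[\uuptau_r]$ for $x\in\sB_R$. These follow from (A6) by Dynkin's formula: $\Exp^{v}_{x,i}[\uuptau_r]\le C(\Lyap(x,i)+1)$, where $C$ is independent of $v$ because \cref{Lyap1} holds for all $\zeta$. This yields $\sup_n\sup_{\sB_R\times\mathbb{S}}|V^{v_n}|\le C_R$.

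If the $r\downarrow0$ limit makes this awkward, I would instead argue with the Harnack inequality for the coupled system. Normalization $V^{v_n}(0,1)=0$ plus oscillation control on balls then gives local $L^\infty$ bounds, with constants depending only on the ellipticity and bounds from (A1)–(A3) and $M$, hence uniform over $\Usm$ thanks to the irreducibility (A4). I expect this uniform local bound to be the main obstacle, and I would try the Lyapunov/Dynkin route first.

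Given the local sup bound, I would rewrite the equation exactly as in \cref{TH2.1}, moving $m_{ii}V$ to the left and $\sum_{j\ne i}m_{ij}V(\cdot,j)$, $c$ and $\rho$ to the right. \cite{GilTru}*{Theorem~9.11} then gives $\sup_n\norm{V^{v_n}}_{\Sob^{2,p}(\sB_R\times\mathbb{S})}<\infty$ for $p\ge d+1$. Reflexivity, the compact embedding into $\cC^{1,\beta}$, and a diagonal argument produce a further subsequence with $V^{v_{n_k}}\to V^*$ weakly in $\Sobl^{2,p}$ and strongly in $\cC^{1,\beta}_{loc}$. Passing to the limit in the weak formulation, using the topology of $\Usm$ for the terms $b(\cdot,v_{n_k})\cdot\grad V^*$, $m_{ij}(\cdot,v_{n_k})V^*$ and $c(\cdot,v_{n_k})$ exactly as in \cref{ETC1.4EA}, gives
\[
\rho^*=\sL_vV^*(x,i)+c(x,i,v(x,i)),\qquad V^*(0,1)=0 .
\]

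To identify $\rho^*=\rho^v$ via the uniqueness in \cref{TErgoExisPoiss1}, I must check $V^*\in\sorder(\Lyap)$. I would obtain this from a uniform-in-$n$ growth estimate, e.g.\ $|V^{v_n}|\le C(1+\widetilde\Lyap)$ for some Lyapunov-type $\widetilde\Lyap\in\sorder(\Lyap)$ built from (A6) (take $\widetilde\Lyap$ with $\sL_\zeta\widetilde\Lyap\le C-\tilde h$, $\tilde h\in\sorder(h)$ inf-compact). Alternatively, I would avoid uniqueness and apply Itô–Krylov to $V^*$ under $v$ directly: dividing by $T$, the boundary term $\frac1T\Exp^v_{x,i}[V^*(X_T,S_T)]\to0$ because $V^*\in\sorder(\Lyap)$ and $\frac1T\Exp\Lyap(X_T,S_T)$ stays bounded, which yields $\rho^*=\sE_{x,i}(c,v)=\rho^v$.

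Since every subsequence of $\{\rho^{v_n}\}$ has a further subsequence converging to $\rho^v$, the whole sequence converges, which proves the theorem.
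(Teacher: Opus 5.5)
Your skeleton is the paper's: bounded $\rho^{v_n}$, a uniform $\Sob^{2,p}(\sB_R\times\mathbb{S})$ bound on $V^{v_n}$, weak and strong subsequential limits, passage to the limit in the weak form, $\widehat V^*\in\sorder(\Lyap)$, uniqueness, and the subsequence argument. The paper does not prove the uniform local bound; it takes it from \cite{rathia}*{Theorems~A.1--A.2}. Your attempt to derive it does not work as written.

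In \eqref{TErgoExisPoiss1B}, the limit $r\downarrow0$ is finite only because of cancellation in $c-\rho^{v}$. Once you bound the integrand by $2M$, you are left with $2M\,\Exp^{v}_{x,i}[\uuptau_r]$. For $d\ge2$ this tends to $+\infty$ as $r\downarrow0$: points are polar for a nondegenerate diffusion, so $\uuptau_r\uparrow\infty$ a.s.\ for $x\neq0$. Your Dynkin bound $\Exp^{v}_{x,i}[\uuptau_r]\le C(\Lyap(x,i)+1)$ can be made uniform in $v$ only for fixed $r$, with $C=C_r\to\infty$ as $r\to0$. So $\sup_n\sup_{\sB_R\times\mathbb{S}}|V^{v_n}|\le C_R$ does not follow.

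The Harnack fallback, as you describe it, is purely local and cannot work on its own. Harnack applies to nonnegative solutions. No local estimate bounds the oscillation of a sign-changing solution of $\sL_vV=\rho-c$ normalized at $(0,1)$, because you can add any multiple of a local $\sL_v$-harmonic function vanishing at $(0,1)$. Such functions exist; for instance, constants are $\sL_v$-harmonic since the rows of $\mathbb{M}$ sum to zero, so $w-w(0,1)$ works for any local solution $w$. Global information must enter.

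A repair: fix $r>0$ and use the strong Markov property in \eqref{TErgoExisPoiss1B} to get $V^{v}(x,i)=\Exp^{v}_{x,i}\bigl[\int_0^{\uuptau_r}(c-\rho^{v})\,\D t+V^{v}(X_{\uuptau_r},S_{\uuptau_r})\bigr]$. This gives $|V^{v_n}|\le 2M\,\Exp^{v_n}_{x,i}[\uuptau_r]+\sup_{\bar\sB_r\times\mathbb{S}}|V^{v_n}|$. You then still need a bound on $\sup_{\bar\sB_r\times\mathbb{S}}|V^{v_n}|$ that is uniform in $n$ and covers all regimes, not just $i=1$. That requires Harnack-type estimates for the cooperative coupled system, uniform in $v$ by (A4), applied to nonnegative auxiliary functions rather than to $V^{v_n}$ itself. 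This is what the cited results of \cite{rathia} supply, and citing them closes the gap.

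The same fixed-$r$ representation underlies your growth estimate $|V^{v_n}|\le C(1+\widetilde\Lyap)$, so your $\sorder(\Lyap)$ step depends on the same missing ingredient. Once that is in place, the rest of your argument goes through. This includes the direct It\^o--Krylov identification of $\rho^*$, which also needs $V^*\in\sorder(\Lyap)$.
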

\begin{proof} Let $v_n$ be a sequence in $\Usm$ such that \(v_n\to v\) in $\Usm$. From \cref{TErgoExisPoiss1}, for each $n \in \NN$, there exists a unique pair $(V^{v_{n}}, \rho^{v_{n}})\in \Sobl^{2,p}(\Rd\times \mathbb{S})\cap\sorder{(\Lyap)}\times \RR$, $1< p < \infty$, with $V^{v_{n}}(0,1) = 0$, satisfying
\begin{equation}\label{ErgodLyapRobu1B}
\rho^{v_{n}} = \sL_{v_{n}}V^{v_{n}}(x,i) + c(x,i,{v_{n}}(x,i))
\end{equation}
In view of \cite{rathia}*{Theorem A.2 (1) and (2)}, from \cite{rathia}*{Theorem A.1}, there exists a constant $\hat{\kappa}_1 > 0$, independent of $n \in \NN$, such that \(\norm{V^{v_{n}}}_{\Sob^{2,p}(\sB_R\times \mathbb{S})}\leq \hat{\kappa}_1\). By the Banach–Alaoglu theorem and standard diagonalization argument (as in \cref{ETC1.3BC}), we deduce the existence of $\widehat{V}^*\in \Sobl^{2,p}(\Rd\times \mathbb{S})$ such that along a subsequence
\begin{equation*}\label{ErgodLyapRobu1C}
\begin{cases}
V^{v_{n_k}}\to & \widehat{V}^*\quad \text{in}\quad \Sobl^{2,p}(\Rd\times \mathbb{S})\quad\text{(weakly)}\\
V^{v_{n_k}}\to & \widehat{V}^*\quad \text{in}\quad \cC^{1, \beta}_{loc}(\Rd\times \mathbb{S})\quad\text{(strongly)}\,.
\end{cases}       
\end{equation*}
for some $0 < \beta < 1 - \tfrac{d}{p}$.
Since $\rho^{v_{n}} \leq M$, there exists a further subsequence (denoted by the same index) such that
$\rho^{v_{n_k}}\to \hat{\rho}$ as $k\to \infty$\,.
Multiplying \cref{ErgodLyapRobu1B} by a test function, integrating over \(\Rd\) and passing to the limit $k \to \infty$, it follows that $(\widehat{V}^*, \hat{\rho})\in \Sobl^{2,p}(\Rd\times \mathbb{S})\times \RR$, \, $1< p < \infty$ satisfies 
\begin{equation}\label{ErgodLyapRobu1D}
\hat{\rho} = \sL_{v}\widehat{V}^*(x,i) + c(x,i,{v}(x,i))
\end{equation}
Since $V^{v_{n_k}}(0,1) = 0$ for all $k \in \NN$, we have $\widehat{V}^*(0,1) = 0$.

Next, arguing similar to the proof of \cite{rathia}*{Theorem 4.3} one can show that $\widehat{V}^*\in \sorder{(\Lyap)}$.
Since $(\widehat{V}^*, \hat{\rho})\in \Sobl^{2,p}(\Rd\times \mathbb{S})\cap \sorder(\Lyap)\times \RR$ satisfies $\widehat V^*(0,1) = 0$ and the ergodic HJB equation \cref{ErgodLyapRobu1D}, the uniqueness result of \cite{rathia}*{Theorem 3.4} implies that
 $(\widehat{V}^*, \hat{\rho}) \equiv (V^v, \rho^v)$.
 Therefore every convergent subsequence of
$\{(V^{v_n},\rho^{v_n})\}$ has the same limit
$(V^{v},\rho^{v})$.
Consequently,
$\rho^{v_n}\to\rho^{v}$, and hence
$\sE(v_n)\to\sE(v)$.
 This completes the proof of the theorem.

\end{proof}

\subsection{Continuity of the finite horizon cost function}\hspace{50em}
\vspace{0.5em}

In this subsection, we study the finite-horizon cost and establish its continuity. 
Unlike the discounted and ergodic cases, the corresponding value function depends explicitly on time and satisfies a parabolic system of coupled Hamilton--Jacobi--Bellman equations with a prescribed terminal condition.

\begin{theorem}\label{thm:finite-horizon-robust}
Suppose Assumptions \hyperlink{A1}{(A1)}-\hyperlink{A3}{(A3)} and \hyperlink{A5}{(A5)}  hold.  Then the map  $v\mapsto\cJ_T(v)$ from $\Um$ to $\RR$ is continuous. i.e., for running cost $c$, 
\[
\lim_{n\to\infty} \cJ_T^{v_n}(x,i,c) = \cJ_T^v(x,i,c) \qquad \forall\,\, (x,i)\in \Rd\times\mathbb{S}.
\]
\end{theorem}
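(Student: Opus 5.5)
We follow the strategy of \cref{TH2.1}, with the parabolic theory replacing the elliptic one. Let $v_n\to v$ in $\Um$. For each $n$, the finite-horizon cost $\psi_n(t,x,i):=\cJ_{T-t}^{v_n}$ (the cost-to-go from time $t$) is characterized as the unique bounded solution in $\Sobl^{1,2,p}((0,T)\times\Rd\times\mathbb{S})$ of the linear coupled parabolic system
\begin{equation*}
\partial_t\psi_n(t,x,i)+\sL_{v_n(t,x,i)}\psi_n(t,x,i)+c(x,i,v_n(t,x,i))=0,\qquad \psi_n(T,x,i)=c_{_T}(x,i).
\end{equation*}
Existence follows from standard parabolic $L^p$-theory for weakly coupled systems, using the terminal regularity $c_{_T}\in\Sobl^{2,p}\cap L^\infty$. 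Uniqueness and the stochastic representation follow from the It\^o--Krylov formula, exactly as in the derivation of \cref{ETC1.3FD}.

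We next derive bounds uniform in $n$. The representation gives $\|\psi_n\|_\infty\le MT+\|c_{_T}\|_\infty$. Move the off-diagonal coupling $\sum_{j\neq i}m_{ij}\psi_n(\cdot,j)$ and $c$ to the right-hand side; these are bounded uniformly since $m_{ij}$ and $c$ are bounded. Interior and terminal parabolic $\Sob^{1,2,p}$ estimates on cylinders $(0,T)\times\sB_R$ (e.g.\ Ladyzhenskaya--Solonnikov--Ural'tseva, or Krylov's $L^p$ estimates) then give $\|\psi_n\|_{\Sob^{1,2,p}((0,T)\times\sB_R\times\mathbb{S})}\le\kappa(R)$, independent of $n$. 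These estimates rely only on the local nondegeneracy \hyperlink{A3}{(A3)} and the local boundedness of $b$.

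For $p$ large, $\Sob^{1,2,p}$ embeds compactly into a H\"older space in which $\psi_n$ and $\grad\psi_n$ converge uniformly on compact subsets of $[0,T]\times\Rd$. By Banach--Alaoglu and diagonalization, a subsequence converges weakly in $\Sobl^{1,2,p}$ and strongly in this H\"older sense to some bounded $\psi$ with $\psi(T,\cdot)=c_{_T}$.

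To pass to the limit in the equation, test against $\phi\in\cC_c^\infty((0,T)\times\Rd\times\mathbb{S})$. The terms $\partial_t\psi_n$ and $\trace(a\grad^2\psi_n)$ converge by weak convergence. For the control-dependent terms we use the same splitting as in the discounted case, for instance
\[
b(\cdot,v_n)\cdot\grad\psi_n-b(\cdot,v)\cdot\grad\psi=b(\cdot,v_n)\cdot\grad(\psi_n-\psi)+\bigl(b(\cdot,v_n)-b(\cdot,v)\bigr)\cdot\grad\psi .
\]
The first piece tends to $0$ by local uniform convergence of $\grad\psi_n$. The second tends to $0$ by the definition of the topology on $\Um$, with test function $f=\phi\,\partial_{x_k}\psi\in L^1\cap L^2$ and $g=b_k\in\cC_b$ (after localizing $b$ on the support of $\phi$). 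The coupling and cost terms are treated in the same way. Thus $\psi$ solves the limit system for $v$. By It\^o--Krylov and boundedness of $\psi$, we get $\psi(0,x,i)=\cJ_T^v(x,i,c)$. Uniqueness of the limit then upgrades subsequential convergence to convergence of the full sequence, pointwise and in fact locally uniformly.

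The main obstacle is the uniform parabolic estimate up to the terminal time $t=T$, where we only know $c_{_T}\in\Sobl^{2,p}$. We plan to handle it by subtracting the terminal datum: set $\tilde\psi_n=\psi_n-c_{_T}$, which has zero terminal condition and source term $c+\sL_{v_n}c_{_T}\in L^p_{\mathrm{loc}}$, bounded uniformly in $n$. A local boundary-in-time $\Sob^{1,2,p}$ estimate then applies. A secondary point is the choice of $p$: it must be large enough (e.g.\ $p>d+2$) that the embedding yields uniform convergence of gradients. If the given $p$ is smaller, we would use interior bootstrapping away from $t=T$ together with $L^\infty$ bounds for the convergence at $t=0$.
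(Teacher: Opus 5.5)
Your proposal is correct and is essentially the paper's proof: uniform local $\Sob^{1,2,p}$ bounds for the cost-to-go $\psi_n$ of the coupled parabolic system, subsequential weak/strong limits, passage to the limit in the weak form via the topology on $\Um$, identification of the limit as $\cJ_T^v$ by the It\^o--Krylov formula, and the subsequence-uniqueness argument. Your two refinements just make explicit what the paper leaves to its cited parabolic estimate: subtracting $c_{_T}$ to get estimates up to $t=T$, and using the H\"older embedding for $p>d+2$ so that $\psi_n(0,x,i)\to\psi(0,x,i)$ pointwise, which the paper's stated $\Sobl^{0,1,p}$ convergence does not by itself give at a fixed time.
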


\begin{proof}
Let $v_n$ be a sequence in \(\Um\) such that
 $v_n \to v$ in $\Usm$. From \cite{DTSB}*{Theorem 1} for each $n\in \NN$ there exists a unique solution \({\psi}_n\in \Sobl^{1,2,p}((0,T)\times\Rd\times\mathbb{S})\) to the following Parabolic PDE equation with terminal data:
\begin{equation}\label{FH5.13}
\begin{aligned}
\partial_t {\psi}_n(t,x,i) 
+ \sL_{v_n} {\psi}_n(t,x,i) + c(x,i,v_n(t,x,i)) &= 0, \quad\forall\, (t,x,i)\in (0,T)\times \Rd\times \mathbb{S}\\
{\psi}_n(T,x,i) &= c_{_{T}}(x,i),\quad\forall\, (x,i)\in \Rd\times \mathbb{S}
\end{aligned}
\end{equation}
Thus, by the parabolic PDE estimates \cite{DTSB}*{Theorem 1}, for any $p>d+2$ and $R>0$, the solution of \cref{FH5.13} satisfies
\begin{equation}\label{FHestm}
\|\psi_n\|_{\Sob^{1,2,p}((0,T)\times\sB_R\times\mathbb{S})} \le \tilde{\kappa}_1 (1+\tilde{\kappa}+\|c\|_{\infty}|\sB_{2R}|^{\frac{1}{p}}+\|c_{_{T}}\|_\infty). 
\end{equation}
for some $\tilde{\kappa}_1,\, \tilde{\kappa}>0$.
Thus, from \eqref{FHestm}, we obtain
\begin{equation}\label{FHapestm}
\|\psi_n\|_{\Sob^{1,2,p}((0,T)\times\sB_R\times\mathbb{S})} \le \tilde{\kappa}_2
\end{equation}
for some positive constant $\tilde{\kappa}_2$ independent of $n$. Since $\Sob^{1,2,p}((0,T)\times\sB_R\times\mathbb{S})$ is a reflexive Banach space, in view of \cref{FHapestm}, compact embedding $\Sob^{1,2,p}((0,T)\times\sB_R\times\mathbb{S}) \hookrightarrow \Sob^{0,1,p}((0,T)\times\sB_R\times\mathbb{S})$ and by the arguments as in \cref{{ETC1.3B}}-\cref{ETC1.3BC} there exists $\hat{\psi} \in \Sobl^{1,2,p}((0,T)\times\Rd\times \mathbb{S})$ such that (along a subsequence, denoted by the same sequence)
\begin{equation}\label{FHcgs}
\begin{aligned}
\psi_n &\to \hat{\psi} \quad \text{in } \Sobl^{1,2,p}((0,T)\times\Rd\times \mathbb{S})\quad\text{(weakly)} \\
\psi_n &\to \hat{\psi} \quad \text{in } \Sobl^{0,1,p}((0,T)\times\Rd\times \mathbb{S})\quad\text{(strongly)}.
\end{aligned}
\end{equation}
Multiplying both sides of \eqref{FH5.13} by a test function $\varphi \in \cC_c^\infty((0,T)\times \Rd\times \mathbb{S})$ and integrating, we obtain
\begin{equation}\label{FH3.5}
\int_0^T \int_{\Rd} \partial_t\psi_n(t,x,i)\,\varphi(t,x,i)\,dxdt + 
\int_0^T \int_{\Rd}  \big[\sL_{v_n} \psi_n(t,x,i) + c(x,i,v_{n}(t,x,i))\big]\varphi(t,x,i)\,dxdt = 0.
\end{equation}
Thus, in view of  \cref{FHcgs} and by letting $n\to\infty$,
from \cref{FH3.5}, it follows 
(by arguments similar to those in \cref{ETC1.3BC}--\cref{ETC1.4F}) that 
$\hat{\psi}$ satisfies the limiting HJB equation
\begin{equation}\label{abcd}
\begin{aligned}
\partial_t \hat{\psi}(t,x,i) +\sL_{v} \hat{\psi}(t,x,i) + c(x,i,v(t,x,i)) &= 0,\quad\forall\, (t,x,i)\in (0,T)\times \Rd\times \mathbb{S} \\
\hat{\psi}(T,x,i) &= c_{_{T}}(x,i),\quad\forall\, (x,i)\in \Rd\times \mathbb{S} 
\end{aligned}
\end{equation}

Now applying the It$\hat{\rm o}$–Krylov formula (\cite{ABG-book}*{Lemma~5.1.4}) in \cref{abcd}, we deduce that
\begin{align}\label{FH5.17}
\hat{\psi}(t,x,i) 
= \Exp_{x,i}^{v}\!\left[ \int_t^T c(X_s,S_s, v(s,X_s,S_s))\,ds + c_{_{T}}(X_T,S_T) \right]. 
\end{align}

Hence, from \cref{FH5.17}, we conclude that 
\[
\hat{\psi}(0,x,i)=\cJ^v_T(x,i,c).
\]
Since every convergent subsequence of $\{\psi_n\}$ converges to the unique solution $\hat \psi$ of \eqref{abcd}, the whole sequence converges to $\hat\psi$.
Hence,
\[
\psi_n(0,x,i)\to\hat\psi(0,x,i)=\cJ_T^{v}(x,i,c).
\]
This completes the proof.
\end{proof}

\subsection{Continuity of the exit time cost function}\hspace{50em}
\vspace{0.5em}

Finally, we consider the exit-time cost criterion. 
We assume that $\beta \in \cC(\bar{\mathcal{O}}\times\mathbb{S}\times\Act)$ and $h \in \Sob^{2,p}(\mathcal{O}\times\mathbb{S})$. Following the derivation in \cite{VSB2005}*{p. 228-229}, the associated HJB equation is given by
\[
\min_{\zeta \in \Act} \left[\sL_\zeta \varphi(x,i) - \beta(x,i,\zeta)\,\varphi(x,i) + c(x,i,\zeta)\right] = 0, 
\quad (x,i) \in \mathcal{O}\times\mathbb{S}, 
\qquad \varphi = h \text{ on } \partial \mathcal{O}\times\mathbb{S}.
\]
The following theorem establishes continuity of the exit-time cost.

\begin{theorem}\label{thm:exit-time-continuity}
Suppose Assumptions \hyperlink{A1}{(A1)}-\hyperlink{A3}{(A3)} and \hyperlink{A5}{(A5)} hold.  Then the map  $v\mapsto\hat\cJ_e$ from $\Usm$ to $\RR$ is continuous, i.e., for running cost $c$,
\[
\lim_{n\to\infty} \hat\cJ_e^{v_n}(x,i,c) = \hat\cJ_e^v(x,i,c) \qquad \forall\,\, (x,i)\in \Rd\times\mathbb{S}.
\]
\end{theorem}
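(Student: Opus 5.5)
We follow the scheme of \cref{TH2.1}, replacing the whole-space Poisson equation by a Dirichlet problem on the bounded domain $\mathcal{O}$.

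\textbf{Step 1: the Dirichlet system for each $v_n$.} For each $n$, consider the linear coupled system
\[
\sL_{v_n}\varphi_n(x,i) - \beta(x,i,v_n(x,i))\,\varphi_n(x,i) + c(x,i,v_n(x,i)) = 0 \quad\text{in } \mathcal{O}\times\mathbb{S},\qquad \varphi_n = h \ \text{on } \partial\mathcal{O}\times\mathbb{S}.
\]
Write $\varphi_n = h + w_n$ with $w_n\in \Sob^{2,p}(\mathcal{O})\cap\Sob^{1,p}_0(\mathcal{O})$. The zeroth-order coefficient $m_{ii}-\beta\le 0$, the off-diagonal couplings $m_{ij}\ge 0$, and (A3) gives uniform ellipticity on the bounded set $\bar{\mathcal{O}}$. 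So existence and uniqueness of $\varphi_n$ follow from the weakly coupled cooperative elliptic theory, assuming $\partial\mathcal{O}$ is regular (say $\cC^{2}$). This is the same kind of statement as \cite{AS}*{Theorem~2.1}, or the exit-time result of \cite{rathia}.

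\textbf{Step 2: stochastic representation and uniform bounds.} Applying the It\^o--Krylov formula up to $\uptau(\mathcal{O})$ gives $\varphi_n = \hat\cJ_e^{v_n}$. Here $\Exp\,\uptau(\mathcal{O})<\infty$ uniformly in the control, by nondegeneracy on a bounded domain. This representation yields the sup bound
\[
\|\varphi_n\|_\infty \le \|h\|_\infty + \|c\|_\infty \sup_{U}\Exp_{x,i}^U\,\uptau(\mathcal{O}) ,
\]
uniformly in $n$.

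Move the couplings $\sum_{j\ne i} m_{ij}\varphi_n(\cdot,j)$ to the right-hand side, as was done with $f$ in \cref{TH2.1}. The global $\Sob^{2,p}$ estimate up to the boundary \cite{GilTru}*{Theorem~9.13} (rather than the interior 9.11) then gives
\[
\|\varphi_n\|_{\Sob^{2,p}(\mathcal{O}\times\mathbb{S})}\le \kappa\bigl(\|\varphi_n\|_{L^p}+\|c\|_\infty+\|h\|_{\Sob^{2,p}}\bigr),
\]
with $p\ge d+1$ and $\kappa$ independent of $n$. The coefficients $b,\beta,m_{ij}$ are bounded on $\bar{\mathcal{O}}$ uniformly in $\zeta$, so the constant does not depend on the control.

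\textbf{Step 3: compactness and identification of the limit.} By reflexivity and the compact embedding into $\cC^{1,\beta}(\bar{\mathcal{O}})$, a subsequence satisfies $\varphi_{n_k}\to\hat\varphi$ weakly in $\Sob^{2,p}$ and strongly in $\cC^{1,\beta}(\bar{\mathcal{O}}\times\mathbb{S})$. Since $\varphi_n - h\in \Sob^{1,p}_0$, which is weakly closed, the limit satisfies $\hat\varphi=h$ on $\partial\mathcal{O}$.

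Pass to the limit in the equation against test functions $\phi\in \cC^\infty_c(\mathcal{O}\times\mathbb{S})$, splitting each control-dependent term as in \cref{TH2.1}:
\[
\beta(x,i,v_n)\varphi_n - \beta(x,i,v)\hat\varphi = \beta(x,i,v_n)(\varphi_n-\hat\varphi) + \bigl(\beta(x,i,v_n)-\beta(x,i,v)\bigr)\hat\varphi .
\]
The first piece converges strongly by uniform convergence of $\varphi_n$. For the second piece, take $f=\phi\,\mathds{1}_{\mathcal{O}}\in L^1\cap L^2$ and $g=\beta\hat\varphi$, extended continuously and boundedly to $\Rd$, in the definition of the topology on $\Usm$. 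The terms involving $b$, $m_{ij}$ and $c$ are handled the same way.

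Hence $\hat\varphi$ solves the Dirichlet system with control $v$. By It\^o--Krylov (or uniqueness from Step 1), $\hat\varphi=\hat\cJ_e^v$. Every subsequence has a further subsequence with this same limit, so the whole sequence converges. This gives pointwise convergence for $x\in\bar{\mathcal{O}}$; for $x\notin\mathcal{O}$ the cost is trivially $h$ for all controls.

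\textbf{Main obstacle.} The main difficulty is Step 2, specifically the boundary regularity. It needs a $\cC^{2}$ (or at least exterior-cone) boundary, global $\Sob^{2,p}$ estimates for the weakly coupled system, and the a priori sup bound uniform in $n$. If $\beta$ could vanish, the maximum principle for the system alone does not give the sup bound. I would therefore first secure it probabilistically via the uniform bound on $\Exp\,\uptau(\mathcal{O})$. This uses (A3) and (A2) on the bounded set, via a test function of the form $e^{\lambda x_1}$ applied to each regime.
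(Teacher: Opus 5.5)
Your proposal is correct and follows the paper's route. The steps are the same: well-posedness of the Dirichlet system for each $v_n$, a uniform $\Sob^{2,p}(\mathcal{O}\times\mathbb{S})$ bound, weak/$\cC^{1,\beta}$ compactness, passage to the limit against test functions using the topology of $\Usm$, and identification of the limit by It\^o--Krylov followed by the subsequence argument. You also supply details the paper leaves implicit, namely the global estimates up to $\partial\mathcal{O}$, the uniform sup bound via $\sup_U \Exp\,\uptau(\mathcal{O})$, and the preservation of the boundary condition in the limit.
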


\begin{proof}
Let $v_n$ be a sequence in \(\Um\) such that
 $v_n \to v$. From \cite{AS}*{Theorem 2.1} for each $n\in \NN$ there exists a unique solution \(\hat{\cJ}^{v_n}_e\in \Sob^{2,p}(\mathcal{O}\times\mathbb{S})\) to the following equation:
    \begin{equation}\label{exit-time-poisson}
     \begin{aligned}
    \sL_{v_n} \hat{\cJ}^{v_n}_e(x,i) - \beta(x,i,{v_n}(x,i))\,\hat{\cJ}^{v_n}_e(x,i) + c(x,i,v_n(x,i)) &= 0, 
\quad (x,i) \in \mathcal{O}\times\mathbb{S}, \\
 \hat{\cJ}^{v_n}_e &= h \quad\text{ on } \partial \mathcal{O}\times\mathbb{S}.
      \end{aligned}
 \end{equation}
 Standard elliptic estimates imply uniform boundedness of \(\|\hat{\cJ}^{v_n}_e\|_{\Sob^{2,p}(\mathcal{O}\times\mathbb{S})}\), independent of \(n\).  
By the Banach–Alaoglu theorem and standard diagonalization argument, there exists a subsequence $\hat{\cJ}^{v_{n_{k}}}_e$ and a limit function \(\bar{\cJ}_e\) such that
\[
\hat{\cJ}^{v_{n_{k}}}_e \to \bar{\cJ}_e \text{ in } \Sob^{2,p} (\mathcal{O}\times\mathbb{S})\quad\text{weakly}, \qquad
\hat{\cJ}^{v_{n_{k}}}_e \to \bar{\cJ}_e \text{ in } \cC^{1,\beta}(\mathcal{O}\times\mathbb{S})\quad \text{strongly.}
\]
Multiplying \cref{exit-time-poisson} by a test function 
\(\phi\in \cC_c^\infty(\mathcal{O}\times\mathbb{S})\) and integrating over \(\mathcal{O}\), then  
passing to the limit as \(n\to\infty\), using the 
the strong convergence  of \(\hat{\cJ}^{v_{n_k}}_e\) in \(\cC^{1,\beta}(\mathcal{O}\times\mathbb{S})\), and the convergence \(v_n\to v\) in \(\Usm\), we obtain
 \(\bar{\cJ}_e\) satisfies
\[\sL_{v} \bar{\cJ}_e(x,i)
   - \beta(x,i,v(x,i))\bar{\cJ}_e(x,i)
   + c(x,i,v(x,i)) = 0\quad in\;\; \mathcal{O}\times\mathbb{S},
   \qquad \bar{\cJ}_e = h \text{ on } \partial\mathcal{O}\times\mathbb{S}.
\]
By applying It\^o-Krylov formula (\cite{ABG-book}*{Lemma 5.1.4}) to the above Dirichlet equation we obtain, \(\bar{\cJ}_e(x,i)=\hat \cJ_e^v(x,i)\) for all \((x,i)\in\mathcal{O}\times\mathbb{S}.\) Since every convergent subsequence of
$\{\hat{\cJ}^{v_n}_e\}$
has the same limit
$\hat{\cJ}^{v}_e$,
the entire sequence converges to
$\hat{\cJ}^{v}_e$.
Therefore,
\[
\hat{\cJ}^{v_n}_e(x,i)\to
\hat{\cJ}^{v}_e(x,i).
\]
\end{proof}

\section{ Denseness of finite action/piecewise constant/Lipschitz continuous Markov/stationary Markov policies}\label{Denseness}
In this section, we show that the finite action/piecewise constant/Lipschitz continuous Markov/stationary Markov policies are dense in the space of Markov/stationary Markov policies.
\subsection{Denseness of finite action stationary policies}\label{finite-action-construction}\hspace{50em}
\vspace{0.5em}

Let $d_{\Act}$ be the metric on the compact action space $\Act$. Since $\Act$ is compact, it is totally bounded. Hence, for each \(n\in\NN\) there exists a finite grid \(\{\zeta_{n,k}\}_{k=1}^{m_n}\) such that 
\[\min_{k=1\dots m_n}d_{\Act}(\zeta,\zeta_{n,k})<\frac{1}{n},\qquad\forall\;\zeta\in\Act\]
Set \(\Lambda_n\df\{\zeta_{n,1}\dots\zeta_{n,m_n}\}\) and the nearest–neighbour quantizer (see, \cite{MR3722422})  \(\mathcal{Q}_n:\Act\to\Lambda_n\) by \[\mathcal{Q}_n(\zeta)=\argmin_{\zeta_{n,k}\in\Lambda_n}d_{\Act}(\zeta,\zeta_{n,k}),\]
where ties are broken by choosing the smallest index, so that $\mathcal{Q}_n$ is measurable.
The map 
\(\mathcal{Q}_n\) induces a measurable partition  $\{\Act_{n,k}\}_{k=1}^{m_n}$ on $\Act$, where \[\Act_{n,k}=\{\zeta\in\Act \;|\; \mathcal{Q}_n(\zeta)=\zeta_{n,k}\}.\]
and by the triangle inequality $\diam(\Act_{n,k})<\frac{2}{n}$. Now, for each \(v\in \Usm\) define the associated finite–action policy \(v_n\) by
\begin{equation}\label{df-finiteaction}v_n(\zeta_{n,k}\;|\;(x,i))\df v(\mathcal{Q}_n^{-1}(\zeta_{n,k})\;|\;(x,i))=v(\Act_{n,k}\;|\;(x,i)).\end{equation}
Thus $ v_n$ takes values in the finite set 
\(\Lambda_n\).

From \cite{SPSY}*{Lemma 4.1} we have that the space of stationary markov policies with finite actions are dense in \(\Usm\) with respect to the Borkar topology.
\begin{lemma}
    For each \(v\in\Usm\) there exists a sequence $v_n$ (defined as in \cref{df-finiteaction}) of policies with finite actions, satisfying
    \begin{equation}
        \lim_{n\to\infty}\int_{\Rd}f(x,i)\int_{\Act}g(x,i,\cdot)v_{n}(x,i)(\D \zeta)\D x = \int_{\Rd}f(x,i)\int_{\Act}g(x,i,\cdot)v(x,i)(\D \zeta)\D x
\end{equation}
for all $f\in \Lp^1(\Rd\times \mathbb{S})\cap \Lp^2(\Rd\times \mathbb{S})$, $g\in \cC_b(\Rd\times \mathbb{S} \times  \Act)$ and $i\in \mathbb{S}$.
\end{lemma}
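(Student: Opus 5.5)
The plan is to show that the quantized kernel $v_n$ from \cref{df-finiteaction} is, uniformly in $(x,i)$, a pushforward of $v(x,i)$ under a map that moves points by less than $2/n$. Convergence then follows from dominated convergence. Concretely, for each $(x,i)$, the definition gives $v_n(x,i) = v(x,i)\circ\mathcal{Q}_n^{-1}$, since $v_n$ puts mass $v(\Act_{n,k}\,|\,(x,i))$ on $\zeta_{n,k}$. Hence for any $g\in\cC_b(\Rd\times\mathbb{S}\times\Act)$,
\begin{equation*}
\int_{\Act} g(x,i,\zeta)\,v_n(x,i)(\D\zeta) = \int_{\Act} g\bigl(x,i,\mathcal{Q}_n(\zeta)\bigr)\,v(x,i)(\D\zeta).
\end{equation*}
Measurability of $(x,i)\mapsto v_n(x,i)$ holds because each $\Act_{n,k}$ is Borel (ties are broken by index) and $v$ is a measurable kernel. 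So $v_n\in\Usm$, and it takes values in measures supported on the finite set $\Lambda_n$.

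Next, since $d_{\Act}(\zeta,\mathcal{Q}_n(\zeta))<1/n$ for all $\zeta$, continuity of $g(x,i,\cdot)$ gives $g(x,i,\mathcal{Q}_n(\zeta))\to g(x,i,\zeta)$ pointwise in $(x,\zeta)$. Everything is bounded by $\|g\|_\infty$. Dominated convergence with respect to $v(x,i)(\D\zeta)$ yields, for each fixed $(x,i)$,
\begin{equation*}
G_n(x,i)\df\int_{\Act} g(x,i,\zeta)\,v_n(x,i)(\D\zeta)\;\to\; G(x,i)\df\int_{\Act} g(x,i,\zeta)\,v(x,i)(\D\zeta).
\end{equation*}
We also have $|G_n|\le\|g\|_\infty$. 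Since $f\in\Lp^1(\Rd\times\mathbb{S})$, the function $|f|\,\|g\|_\infty$ is an integrable dominating function. A second application of dominated convergence, now in $x$, gives $\int_{\Rd} f(x,i)G_n(x,i)\,\D x\to\int_{\Rd} f(x,i)G(x,i)\,\D x$ for each $i\in\mathbb{S}$, which is exactly the claimed convergence. Only $f\in\Lp^1$ is used; the $\Lp^2$ assumption is superfluous here.

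The only delicate point is the pointwise step. One should not claim uniform continuity of $g$ in $\zeta$ uniformly in $x$, because $g$ is merely bounded continuous on a noncompact space. I avoid this by working at fixed $(x,i)$ and using continuity of $g(x,i,\cdot)$ on the compact set $\Act$ (so it is uniformly continuous there, though pointwise convergence already suffices). This is what justifies the inner limit. Alternatively, one may simply cite \cite{SPSY}*{Lemma 4.1}, whose proof transfers verbatim since the regime index $i$ ranges over a finite set.
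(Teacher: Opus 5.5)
Your proof is correct. Writing $v_n(x,i)=v(x,i)\circ\mathcal{Q}_n^{-1}$ and using $d_{\Act}(\zeta,\mathcal{Q}_n(\zeta))<1/n$ with continuity of $g(x,i,\cdot)$ at fixed $(x,i)$, you apply dominated convergence twice: first in $\zeta$, then in $x$ with dominating function $\|g\|_\infty|f|$. The paper gives no proof of its own and simply cites \cite{SPSY}*{Lemma 4.1}; your quantizer-plus-dominated-convergence argument is essentially the standard proof behind that cited result, with the finite regime index $i$ carried along as a parameter.
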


\subsection{Denseness of piecewise constant Markov/stationary Markov policies}\label{p.w_denseness}\hspace{50em}
\vspace{0.5em}

Let \(d_{\sP}\) denote the Prokhorov metric on the space $V$ of probability measures on $\Act$. Since \((\Act,d_{\Act})\) is compact, $V$ is separable and the convergence in \((V,d_{\sP})\) is equivalent to the weak convergence.
From \cite{SPSY}*{Theorem 4.2} we have that the space of piecewise constant policies are dense in \(\Usm\).
\begin{theorem}
    For each \(v\in\Usm\) there exists a sequence $v_n$ of piecewise constant stationary policies, satisfying
    \begin{equation}
        \lim_{n\to\infty}\int_{\Rd}f(x,i)\int_{\Act}g(x,i,\cdot)v_{n}(x,i)(\D \zeta)\D x = \int_{\Rd}f(x,i)\int_{\Act}g(x,i,\cdot)v(x,i)(\D \zeta)\D x
\end{equation}
for all $f\in \Lp^1(\Rd\times \mathbb{S})\cap \Lp^2(\Rd\times \mathbb{S})$, $g\in \cC_b(\Rd\times \mathbb{S} \times  \Act)$ and $i\in \mathbb{S}$.
\end{theorem}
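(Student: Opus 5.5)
The plan is to obtain the piecewise constant approximants in two stages: first approximate $v$ by a finite-action policy using the preceding Lemma, and then approximate the resulting finite-action policy by piecewise constant policies in the state variable. The topology of $\Usm$ is metrizable with a compact metric, so a diagonal argument then produces a single sequence. By \emph{piecewise constant} we mean a policy $v_n(x,i)=\sum_{k}\mathbf 1_{B_k}(x)\,\mu_{k,i}$, where $\{B_k\}$ is a partition of $\Rd$ into cubes of side $1/n$ (truncated to $\sB_n$, with a fixed measure outside) and $\mu_{k,i}\in\pV$ are constants.

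\emph{Step 1 (reduction to finite actions).} By the Lemma, pick a finite-action $v^m$ with $v^m\to v$. It therefore suffices to approximate a policy with values supported on a finite set $\Lambda=\{\zeta_1,\dots,\zeta_L\}$. Such a policy is determined by the measurable weights $w_\ell(x,i)=v(x,i)(\{\zeta_\ell\})\in[0,1]$, which satisfy $\sum_\ell w_\ell=1$.

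\emph{Step 2 (approximation of the weights).} For each $\ell$ and $i$, define $w^n_\ell(x,i)$ as the average of $w_\ell(\cdot,i)$ over the cube $B_k$ containing $x$; this is the conditional expectation onto the dyadic $\sigma$-algebra. The averages still sum to $1$ and are nonnegative, so $v_n(x,i)=\sum_\ell w^n_\ell(x,i)\delta_{\zeta_\ell}$ is a piecewise constant element of $\Usm$. By the Lebesgue differentiation (martingale convergence) theorem, $w^n_\ell\to w_\ell$ a.e. on $\Rd$.

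\emph{Step 3 (verification of convergence).} For $f\in L^1\cap L^2$ and $g\in\cC_b$, we have
\[
\int_{\Rd} f(x,i)\int_{\Act} g(x,i,\zeta)\,v_n(x,i)(\D\zeta)\,\D x=\sum_{\ell}\int_{\Rd} f(x,i)\,g(x,i,\zeta_\ell)\,w^n_\ell(x,i)\,\D x .
\]
The integrand is dominated by $\norm{g}_\infty\abs{f}\in L^1$ and converges a.e., so dominated convergence gives the limit with $w_\ell$ in place of $w^n_\ell$. This is exactly the defining convergence in $\Usm$.

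The main obstacle is combining the two limits. Since $\Usm$ is compact metrizable, for each $m$ we can choose $n_m$ with $d(v^m_{n_m},v^m)<1/m$, and then the triangle inequality gives $v^m_{n_m}\to v$. An alternative that avoids the metric is to skip Step 1 and use Lusin's theorem for the $\pV$-valued map $x\mapsto v(x,i)$, which is measurable into the separable metric space $(\pV,d_{\sP})$. On compacts this map is uniformly approximable off small-measure sets by functions constant on cubes, and convergence then follows from $\int_{\Act} g\,\D v_n\to\int_{\Act} g\,\D v$ a.e. together with dominated convergence. In this variant the delicate point is that $g$ depends on $x$, so one needs equicontinuity of $\zeta\mapsto g(x,i,\zeta)$ on compacts, or else a reduction to $g$ of product form $g_1(x)g_2(\zeta)$ by Stone–Weierstrass density.
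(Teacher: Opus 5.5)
Your proposal is correct. It is also more than the paper provides, since the paper gives no proof of its own and simply imports the statement from \cite{SPSY}*{Theorem~4.2}. The only preparation it records is the Prokhorov metric $d_{\sP}$ on $\pV$, separability, and the equivalence with weak convergence. That is the setup a Lusin-type argument for the $\pV$-valued map $x\mapsto v(x,i)$ would use, so the cited proof is likely close to your alternative variant.

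Your main route is genuinely different. After the finite-action reduction, the policy is encoded by finitely many scalar weights $w_\ell(\cdot,i)\in\Lp^\infty$. Cell averaging then gives explicit piecewise constant approximants, a.e.\ convergence is just Lebesgue differentiation, and dominated convergence with majorant $\norm{g}_\infty\abs{f}$ finishes the argument. No Lusin theorem and no metric structure on $\pV$ is needed. The price is the diagonal step, which is legitimate because the paper asserts that the topology on $\Usm$ is compact metrizable.

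Two minor corrections.
\begin{enumerate}
\item Cubes of side $1/n$ are nested only along $n=2^k$, so for general $n$ the a.e.\ convergence should be credited to Lebesgue differentiation along nicely shrinking sets, not to martingale convergence. The grid cube containing $x$ lies within distance $\sqrt d/n$ of $x$ and has volume comparable to that ball. The truncation to $\sB_n$ is harmless, since each fixed $x$ eventually lies in a full cube inside $\sB_n$.
\item In the Lusin variant there is no delicate point. For fixed $x$, $g(x,i,\cdot)\in\cC_b(\Act)$, so $d_{\sP}(v_n(x,i),v(x,i))\to0$ on the Lusin set already gives pointwise convergence of the inner integrals. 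Dominated convergence plus smallness of $\int\abs{f}$ over the exceptional set then suffice. Neither equicontinuity nor a Stone--Weierstrass reduction is needed, although equicontinuity holds anyway because $g$ is uniformly continuous on $\bar{\sB}_R\times\Act$.
\end{enumerate}
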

Similarly, from \cite{SPSY}*{Theorem 6.2} the space of piecewise constant policies are dense in \(\Um\).
\begin{theorem}
    Let $v\in\Um$ then there exists a sequence $v_n$ of piecewise constant markov policies, satisfying
   \begin{align*}
\lim_{n \to \infty} 
&\int_0^\infty \int_{\Rd} f(t,x,i) 
\Bigg( \int_{\Act} g(t,x,i,\zeta)\, v_n(t,x,i)(d\zeta) \Bigg) dx\,dt\nonumber\\
&=
\int_0^\infty \int_{\Rd} f(t,x,i) 
\Bigg( \int_{\Act} g(t,x,i,\zeta)\, v(t,x,i)(d\zeta) \Bigg) dx\,dt,
\end{align*}
for all 
$i\in\mathbb{S,}\,\, f \in L^1([0,\infty)\times\Rd \times\mathbb{S}) \cap L^2([0,\infty)\times\Rd \times\mathbb{S} ),\,\, g \in \cC_b([0,\infty)\times\Rd\times\mathbb{S} \times \Act).$

\end{theorem}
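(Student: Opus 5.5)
The plan is to reproduce, for the time-dependent class $\Um$, the construction behind the stationary result cited just above. The approximating policy will be constant on cells of a space–time partition. Density is proved by testing against a convenient class of functions and then closing the gap with an approximation argument.

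\textbf{Step 1: reduce to product test functions and a truncated domain.} By linearity and a standard density argument it suffices to verify the convergence for
\[
f\in L^1\cap L^2([0,\infty)\times\Rd\times\mathbb{S}),\qquad g\in\cC_b([0,\infty)\times\Rd\times\mathbb{S}\times\Act),
\]
with $f$ supported in a bounded set $[0,T]\times\sB_R$. The tail is controlled because $\sup_n\bigl|\int_{\Act} g\,v_n(\D\zeta)\bigr|\le\norm{g}_\infty$, so the remainder is at most $\norm{g}_\infty$ times the $L^1$ norm of $f$ outside $[0,T]\times\sB_R$. By the same bound we may replace $f$ by a continuous compactly supported function. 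Since $\Act$ is compact, $g$ is uniformly continuous on $[0,T]\times\bar\sB_R\times\mathbb{S}\times\Act$.

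\textbf{Step 2: construct $v_n$.} Fix $n$ and partition $[0,n)\times[-n,n)^d$ into dyadic cubes $Q_{n,k}$ of side $2^{-n}$. For $(t,x)\in Q_{n,k}$ and $i\in\mathbb S$, define $v_n(t,x,i)$ as the averaged measure
\[
\bar v_{n,k,i}(A):=\frac{1}{|Q_{n,k}|}\int_{Q_{n,k}} v(s,y,i)(A)\,\D s\,\D y .
\]
This is a probability measure on $\Act$, so $v_n(t,x,i)\in\pV$, and $v_n$ is Borel measurable. Outside $[0,n)\times[-n,n)^d$, set $v_n$ equal to a fixed $\delta_{\zeta_0}$. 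Each $v_n$ is then piecewise constant in $(t,x)$ with finitely many pieces. If one insists on Dirac-valued, deterministic pieces, compose with the finite-action quantization $\mathcal Q_n$ of \cref{finite-action-construction}.

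\textbf{Step 3: pass to the limit.} Write $G_v(t,x,i):=\int g(t,x,i,\zeta)\,v(t,x,i)(\D\zeta)$. On a cube $Q_{n,k}$, uniform continuity of $g$ gives
\[
\int_{Q_{n,k}} f\,G_{v_n}\,\D x\,\D t=\frac{1}{|Q_{n,k}|}\int_{Q_{n,k}}\int_{Q_{n,k}} f(t,x,i)\int g(s,y,i,\zeta)\,v(s,y,i)(\D\zeta)\,\D s\,\D y\,\D t\,\D x+o(1)\,|Q_{n,k}|.
\]
Summing over the cubes, the main term equals $\int (E_n f)\,G_v$, where $E_n f$ is the conditional expectation of $f$ on the dyadic partition. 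By the martingale convergence theorem (or Lebesgue differentiation), $E_nf\to f$ in $L^1$. Since $|G_v|\le\norm g_\infty$, this yields $\int f\,G_{v_n}\to\int f\,G_v$.

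\textbf{Main obstacle.} The delicate point is the swap of $g(t,x,\cdot)$ for $g(s,y,\cdot)$ inside a cube. A naive weak-limit argument fails here because $g$ depends on the space–time variable, so we must use uniform continuity on the compact set from Step 1; this is exactly why the truncation comes first. If the deterministic quantized version is required, its error is an additional $\omega_g(2/n)$ term, where $\omega_g$ is the modulus of continuity of $g$. That term vanishes by the same uniform continuity, as in the proof of \cite{SPSY}*{Theorem 6.2}.
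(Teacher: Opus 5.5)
\textbf{Verdict.} Your argument is correct, and it takes a genuinely different route from the paper. The paper gives no proof of this statement; it imports it from \cite{SPSY}*{Theorem 6.2}. The remark placed just before the statement (Prokhorov metric $d_{\sP}$, separability of $\pV$) suggests that the cited argument treats $v$ as a measurable map into the metric space $(\pV,d_{\sP})$, presumably regularizes it in $(t,x)$ in a Lusin-type way, and then discretizes on a grid.

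\textbf{How your route differs.} You never use a metric on $\pV$, only its convexity. Replacing $v$ on each dyadic space--time cell by its average moves the whole approximation onto the test function: the main term becomes $\int (E_nf)\,G_v$. Uniform continuity of $g$ on a compact set absorbs the $(t,x)$-dependence of $g$.

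\textbf{What each approach buys.} Your construction is explicit and independent of $(f,g)$, and it needs no Lusin or extension theorem. Its cells are products of uniform time intervals and spatial cubes, so $v_n$ is piecewise constant in $t$ and in $x$ simultaneously. It therefore qualifies under either reading of ``piecewise constant'' for Markov policies. The price is that your pieces are mixtures even when $v$ is Dirac-valued, whereas a construction that samples (a regularization of) $v$ at grid points can retain Dirac values.

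\textbf{Minor imprecisions.} First, the error on a cell is really $\omega_g(\diam Q_{n,k})\int_{Q_{n,k}}|f|$. This sums to $\omega_g(\cdot)\norm{f}_{L^1}$, so neither the reduction to continuous $f$ nor the $L^2$ hypothesis is needed. Second, $g$ must be uniformly continuous on a slightly enlarged compact set, because cells meeting the support of $f$ can stick out of $[0,T]\times\bar\sB_R$.

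\textbf{The Dirac-valued aside is wrong.} Composing with $\mathcal Q_n$ of \cref{finite-action-construction} only pushes $\bar v_{n,k,i}$ forward to a measure supported on the finite set $\Lambda_n$, which is still a mixture. Deterministic pieces would require a chattering subdivision of each cell into subcells with volumes proportional to the weights $\bar v_{n,k,i}(\Act_{n,m})$. Markov controls here take values in $\pV$, so this does not affect your proof of the statement.
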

\subsection{Denseness of Lipschitz continuous Markov/stationary Markov policies}\hspace{50em}
\vspace{0.5em}

Finally, from \cite{SPSY2}*{Theorem 3.1} and \cite{SPSY2}*{Theorem 3.2}, we have that the space of Lipschitz stationary policies are dense in $\Usm/\Um$.
\begin{theorem}
    For each \(v\in\Usm\) there exists a sequence $v_n$ of Lipschitz policies in $\Usm$, satisfying
    \begin{equation}
        \lim_{n\to\infty}\int_{\Rd}f(x,i)\int_{\Act}g(x,i,\cdot)v_{n}(x,i)(\D \zeta)\D x = \int_{\Rd}f(x,i)\int_{\Act}g(x,i,\cdot)v(x,i)(\D \zeta)\D x
\end{equation}
for all $f\in \Lp^1(\Rd\times \mathbb{S})\cap \Lp^2(\Rd\times \mathbb{S})$, $g\in \cC_b(\Rd\times \mathbb{S} \times  \Act)$ and $i\in \mathbb{S}$.
\end{theorem}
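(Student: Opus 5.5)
The claim is that Lipschitz continuous stationary Markov policies are dense in $\Usm$ for the Borkar topology. My plan is to build, for a given $v\in\Usm$, a sequence $v_n$ of Lipschitz policies by mollifying $v$ in the $x$-variable, separately for each regime $i\in\mathbb{S}$. This is the construction of \cite{SPSY2}*{Theorem 3.1}, applied regime by regime, and it works because $\mathbb{S}$ is finite and the topology on $\Usm$ is tested one $i$ at a time.

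\textbf{Construction.} Fix $i$ and regard $x\mapsto v(x,i)$ as a measurable map $\Rd\to\pV$. Let $\eta_\varepsilon$ be a standard mollifier. Define the measure-valued convolution
\[
v_\varepsilon(x,i)(A)\df\int_{\Rd}\eta_\varepsilon(x-y)\,v(y,i)(A)\,\D y .
\]
For each $x$ this is a probability measure on $\Act$. Its total variation distance satisfies
\[
\norm{v_\varepsilon(x,i)-v_\varepsilon(x',i)}_{TV}\le \norm{\grad\eta_\varepsilon}_{L^1}\abs{x-x'},
\]
so $v_\varepsilon(\cdot,i)$ is Lipschitz into $(\pV,d_{\sP})$, since the Prokhorov metric is dominated by the total variation distance.

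\textbf{Reduction to a simpler test class.} Next I check the convergence criterion of the topology on $\Usm$. Take $f\in L^1\cap L^2$ and $g\in\cC_b(\Rd\times\mathbb{S}\times\Act)$. By Fubini,
\[
\int f(x,i)\int g(x,i,\zeta)\,v_\varepsilon(x,i)(\D\zeta)\,\D x=\int\!\!\int\Bigl(\int f(x,i)\eta_\varepsilon(x-y)g(x,i,\zeta)\,\D x\Bigr)v(y,i)(\D\zeta)\,\D y .
\]
This is not yet of the form $\int f*\eta_\varepsilon\cdot g\,\D v$, because $g$ still depends on $x$. I will handle this by reducing to product test functions $g(x,i,\zeta)=g_1(x)g_2(\zeta)$. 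The justification is to approximate a general $g$ on $\bar\sB_R\times\Act$ uniformly by finite sums of products, using Stone--Weierstrass. Outside $\sB_R$ the error is controlled by $\norm{g}_\infty\int_{\sB_R^c}\abs{f}$, and the uniform-in-$\varepsilon$ error on $\sB_R$ is controlled by $\norm{f}_{L^1}$.

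\textbf{The product case.} For $g=g_1(x)g_2(\zeta)$ the inner integral above becomes $(\tilde\eta_\varepsilon*(fg_1))(y)$, where $\tilde\eta_\varepsilon(z)=\eta_\varepsilon(-z)$. This converges to $f g_1$ in $L^1$, and $\int g_2\,\D v(y,i)$ is bounded, so the limit is exactly $\int f g_1\int g_2\,\D v\,\D x$.

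\textbf{The Markov case.} For $\Um$ I would run the same argument with mollification in $(t,x)$ on $[0,\infty)\times\Rd$, extending $v$ to negative times by its value at $0$. Alternatively, I can first approximate by piecewise constant policies using the piecewise constant density theorem stated above and then mollify.

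\textbf{Expected obstacle.} The main difficulty is the reduction step: making the Stone--Weierstrass approximation and the tail truncation uniform in $\varepsilon$. It also requires checking that the continuity of $g$ in $x$ is actually used there, since the convolution moves the $x$-argument. Separately, one must confirm that ``Lipschitz policy'' means Lipschitz into $(\pV,d_{\sP})$ and not pointwise Dirac-valued. The total-variation bound above delivers the former.
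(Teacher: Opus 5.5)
Your proposal is correct and follows the same route as the paper. The paper gives no argument of its own; it imports the density result from \cite{SPSY2}*{Theorem 3.1}, implicitly regime by regime, which is legitimate because $\mathbb{S}$ is finite and the topology is tested one $i$ at a time. Your regime-wise mollification is a sound self-contained proof of that cited statement: the total-variation Lipschitz bound, plus the Stone--Weierstrass reduction to product test functions with errors $\norm{f}_{\Lp^1}\delta$ and $\norm{g}_\infty\int_{\sB_R^c}\abs{f}$, both uniform in $\varepsilon$, settles what you flagged as the obstacle. Your caveat on the meaning of ``Lipschitz'' is also apt: the paper never defines it, and your construction gives $\pV$-valued policies that are Lipschitz into $(\pV,d_{\sP})$, not Dirac-valued ones.
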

\begin{theorem}
    For each \(v\in\Um\) there exists a sequence $v_n$ of Lipschitz policies in $\Um$, satisfying
    \begin{align}
\lim_{n\to\infty}\int_0^\infty\int_{\Rd}f(t,x,i)\int_{\Act}g(t,x,i,\cdot)v_{n}(t,x,i)(\D \zeta)\D x\D t \nonumber \\ = \int_0^\infty\int_{\Rd}f(t,x,i)\int_{\Act}g(t,x,i,\cdot)v(t,x,i)(\D \zeta)\D x\D t
\end{align}
for all $f\in \Lp^1([0,\infty)\times\Rd\times \mathbb{S})\cap \Lp^2([0,\infty)\times\Rd\times \mathbb{S})$, $g\in \cC_b([0,\infty)\times\Rd\times \mathbb{S} \times  \Act)$ and $i\in \mathbb{S}$.
\end{theorem}
\section{ Near optimality of quantized/piece-wise constant/smooth policies for controlled RSDPs}\label{Section-near-op}

We show that the classes of finite-action, piecewise-constant, and Lipschitz policies provide $\epsilon$-optimal approximations for the optimal control problems under various cost criteria.

\subsection{Discounted Cost}
\begin{theorem}
    Suppose assumptions \hyperlink{A1}{(A1)}-\hyperlink{A3}{(A3)} and \hyperlink{A5}{(A5)} hold. Then for every $\epsilon>0$ there exists a finite action policy \(v_{\epsilon}^*\), a piecewise constant policy \(\bar v_{\epsilon}^{*}\), and a Lipschitz policy \(\hat v_{\epsilon}^*\)  in \(\Usm\) such that
    \begin{equation}\label{nearopt-discounted}
    \cJ_{\alpha}^{v_{\epsilon}^*}(x,i)\le V_\alpha(x,i)+\epsilon,\quad\cJ_{\alpha}^{\bar v_{\epsilon}^*}(x,i)\le V_\alpha(x,i)+\epsilon\quad \text{and}\quad \cJ_{\alpha}^{\hat v_{\epsilon}^*}(x,i)\le V_\alpha(x,i)+\epsilon
    \end{equation}
    for all $(x,i)\in\Rd\times\mathbb{S}$.    
\end{theorem}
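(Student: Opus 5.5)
The plan is to fix an optimal stationary Markov policy and build each structured policy so that the \emph{defect} in the discounted HJB equation is small uniformly, rather than passing through \cref{TH2.1}. That theorem only gives convergence of $\cJ_\alpha^{v_n}$ locally uniformly in $x$ (via $\cC^{1,\beta}_{loc}$ convergence), whereas \cref{nearopt-discounted} asks for one policy that is $\epsilon$-optimal at every $(x,i)\in\Rd\times\mathbb{S}$.

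\textbf{Step 1.} By \cite{AS}*{Theorem 2.1} and the standard measurable selection argument (\cite{AGM93}), $V_\alpha\in\Sobl^{2,p}(\Rd\times\mathbb{S})\cap\cC_b$ solves
\[
\min_{\zeta\in\Act}\bigl[\sL_\zeta V_\alpha(x,i)+c(x,i,\zeta)\bigr]=\alpha V_\alpha(x,i).
\]
There is a precise (Dirac-valued) minimizing selector $v^*\in\Usm$, and by the It\^o--Krylov argument of \cref{TH2.1} we have $\cJ_\alpha^{v^*}=V_\alpha$.

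\textbf{Step 2.} For any $\tilde v\in\Usm$, set $w=\cJ_\alpha^{\tilde v}-V_\alpha$. Subtracting the two Poisson equations gives
\[
\sL_{\tilde v}w-\alpha w=-g,\qquad g(x,i)\df \sL_{\tilde v}V_\alpha(x,i)+c(x,i,\tilde v(x,i))-\alpha V_\alpha(x,i)\ \ge 0 .
\]
Since $w$ is bounded, It\^o--Krylov and letting $T\to\infty$ give
\[
0\le w(x,i)=\Exp^{\tilde v}_{x,i}\Bigl[\int_0^\infty e^{-\alpha t}g(X_t,S_t)\,\D t\Bigr]\le \frac{\norm{g}_\infty}{\alpha}.
\]
So it suffices to construct $\tilde v$ in each class with $\norm{g}_\infty\le\alpha\epsilon$. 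Writing $g$ out, and using that the second-order terms cancel,
\[
g(x,i)=\bigl[b(x,i,\tilde v)-b(x,i,v^*)\bigr]\cdot\grad V_\alpha+\sum_j\bigl[m_{ij}(x,\tilde v)-m_{ij}(x,v^*)\bigr]V_\alpha(x,j)+c(x,i,\tilde v)-c(x,i,v^*).
\]

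\textbf{Step 3: finite-action policy.} Take $\tilde v=\mathcal Q_n\circ v^*$ as in \cref{df-finiteaction}, so that $d_{\Act}(\tilde v(x,i),v^*(x,i))<1/n$ pointwise. On $\sB_R$, uniform continuity of $b,c,m_{ij}$ on the compact set $\bar\sB_R\times\Act$ and the local bound on $\grad V_\alpha$ (from \cref{ETC1.3B} and the embedding into $\cC^{1,\beta}$) give $\sup_{\sB_R}g\to0$ as $n\to\infty$.

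\textbf{Main obstacle: the complement $\sB_R^c$.} Here I would first try to show two things. The first is a global bound $\sup_{\Rd}\abs{\grad V_\alpha}<\infty$. This would follow from applying \cite{GilTru}*{Theorem 9.11} on unit balls $\sB_1(x_0)$ with constants uniform in $x_0$, which needs the ellipticity and coefficient bounds in (A1)--(A3) to be uniform on such balls. The second is that $\zeta\mapsto b,c,m_{ij}$ are equicontinuous in $\zeta$ uniformly in $x$, so that $\norm{g}_\infty\to0$ outright.

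If this uniformity is not available, the fallback is to localize. Choose $R$ large and set $\tilde v=\mathcal Q_n\circ v^*$ on $\sB_R$. Outside $\sB_R$, choose $\tilde v$ as a finite-action near-minimizer of the Hamiltonian $\zeta\mapsto b(x,i,\zeta)\cdot\grad V_\alpha+\sum_j m_{ij}(x,\zeta)V_\alpha(x,j)+c(x,i,\zeta)$ from a fixed grid $\Lambda_n$. The grid is chosen point by point using the continuity of the Hamiltonian in $\zeta$, which keeps $\tilde v$ finite-action while making $g$ small pointwise. The uniform smallness of $g$ there again rests on a uniform modulus, and I expect this to be the crux of the proof.

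\textbf{Step 4: piecewise constant and Lipschitz policies.} I would reuse the defect representation of Step 2. For the piecewise constant policy, replace $v^*$ on each cube of a fine partition of $\Rd$ by a constant near-minimizer of the Hamiltonian at the cube's centre; the $\cC^{1,\beta}$ continuity of $V_\alpha$ and local Lipschitz continuity of the coefficients in $x$ control $g$ on each cube. For the Lipschitz policy, mollify the relaxed-control version of the piecewise constant one with a partition of unity subordinate to the cubes. In both cases $g$ is controlled on each cube by the oscillation of the coefficients and of $\grad V_\alpha$ over cubes of small diameter. Making that oscillation small uniformly over all of $\Rd$ runs into the same uniform-modulus issue as Step 3, and I would handle it the same way.
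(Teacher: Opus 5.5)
Your Steps 1--2 are correct: $g\ge 0$ because $v^*$ is a minimizing selector, and $0\le w=\Exp^{\tilde v}_{x,i}\int_0^\infty e^{-\alpha t}g(X_t,S_t)\,\D t$. The gap is the one you flag yourself: you never get $\sup_{\Rd\times\mathbb{S}}g\le\alpha\epsilon$. For finite-action policies this cannot be done, because under (A1)--(A3), (A5) the global claim (one policy, $\epsilon$-optimal at every $(x,i)$) is false.

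Take $d=1$, $\Act=[0,1]$, $b\equiv0$, $\upsigma\equiv1$, constant switching rates, and
\[
c(x,i,\zeta)=\tfrac12\bigl(1+\cos\bigl(2\pi(1+\abs{x})\zeta\bigr)\bigr).
\]
This $c$ is bounded, continuous, and $\pi$-Lipschitz in $x$ uniformly in $\zeta$. The selector $\zeta^*(x)=\frac{1}{2(1+\abs{x})}$ has zero running cost, so $V_\alpha\equiv0$.

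Now fix any finite $\Lambda\subset\Act$. Dirichlet's simultaneous approximation theorem gives $q\in\NN$ such that $q\zeta$ lies within $1/8$ of an integer for every $\zeta\in\Lambda$. Put $x_0=q-1$. For $\abs{x-x_0}<1/8$, each $(1+\abs{x})\zeta$ with $\zeta\in\Lambda$ is then within $1/4$ of an integer, so $c(x,i,\mu)\ge 1/2$ for every $\mu\in\mathscr{P}(\Lambda)$. Hence every policy with actions in $\Lambda$ satisfies $\cJ_\alpha^{\tilde v}(x_0,i)\ge\frac{1}{2\alpha}\bigl(1-\Exp e^{-\alpha\uptau}\bigr)=:\kappa>0$, where $\uptau$ is the exit time of a Brownian motion from a ball of radius $1/8$. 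Since $\kappa$ depends on neither $\Lambda$ nor $x_0$, no finite-action policy is $\epsilon$-optimal everywhere once $\epsilon<\kappa$.

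Your fallback does not help. Write $H(x,i,\zeta)$ for your Hamiltonian. With $\Lambda_n$ fixed, the pointwise defect $\min_{\Lambda_n}H(x,i,\cdot)-\min_{\Act}H(x,i,\cdot)$ is governed by the $x$-dependent modulus of $H$ in $\zeta$, and that is exactly what this example makes large.

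Here is what can be salvaged.

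\textbf{Piecewise-constant and locally Lipschitz policies.} No uniform modulus is needed, because the mesh may adapt to $x$. On each shell $\sB_{R+1}\setminus\sB_R$, choose cubes so small that $\sup_{\zeta}\abs{H(x,i,\zeta)-H(x_Q,i,\zeta)}\le\delta$ on each cube $Q$. This is possible by local Lipschitz continuity of $b,m_{ij},c$ in $x$ uniformly in $\zeta$, together with continuity of $V_\alpha$ and $\grad V_\alpha$. Then $0\le g\le2\delta$ on all of $\Rd$, and your Step 2 gives the global bound. This works provided the class lets cube sizes (respectively Lipschitz constants) vary with $\abs{x}$. With a uniform mesh or a global Lipschitz constant, you again need the uniform modulus that (A1) and (A5) do not provide.

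\textbf{Finite-action policies.} Only a local statement holds. By the strong Markov property at $\uptau_R$, and using $w\le\cJ^{\tilde v}_\alpha\le M/\alpha$,
\[
0\le w(x,i)\le\frac1\alpha\sup_{\sB_R\times\mathbb{S}}g+\frac{M}{\alpha}\,\sup_{U\in\Uadm}\Exp^{U}_{x,i}\bigl[e^{-\alpha\uptau_R}\bigr].
\]
The last term tends to $0$ as $R\to\infty$, uniformly for $x$ in a compact set; use $1+\abs{x}^2$ as a Lyapunov function under (A2). Your Step 3 on $\sB_R$ then finishes the argument.

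This local version is also all that the paper's proof delivers. The paper combines the density results with \cref{TH2.1}, whose convergence is only in $\cC^{1,\beta}_{loc}$, so your objection to the order of quantifiers applies to the paper's argument too. Your defect route reaches the local conclusion with an explicit error bound and without the compactness argument. The paper's route, in exchange, needs only continuity plus density and never uses the HJB equation or a precise optimal selector.
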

\begin{proof}
By \cite{AGM93}*{Theorems~6.1-6.2 and Corollary~6.1}, there exists an optimal control  \(v^*\in \Usm\) satisfying \(\cJ_{\alpha}^{v^*}(x,i, c) = \inf_{U\in \Uadm}\cJ_{\alpha}^{U}(x,i, c)\;\forall\;(x,i)\in\Rd\times\mathbb{S}\). Since finite-action, piecewise-constant, and Lipschitz policies are dense in $\Usm$, and the map \(v\mapsto\cJ_\alpha(v)\) is continuous, each class contains an $\epsilon$-optimal policy, yielding \eqref{nearopt-discounted}. 
\end{proof}
\subsection{Exit-Time Cost}

\begin{theorem}
    Suppose assumptions \hyperlink{A1}{(A1)}-\hyperlink{A3}{(A3)} and \hyperlink{A5}{(A5)} hold. Then for every $\epsilon>0$ there exists a finite action \(v_{\epsilon}^*\), a piecewise constant  \(\bar v_{\epsilon}^{*}\), and a Lipschitz policy \(\hat v_{\epsilon}^*\)  in \(\Usm\) which is $\epsilon$-optimal for the exit-time cost. That is,
    \begin{equation}\label{nearopt-exit}
    \hat \cJ_{e}^{v_{\epsilon}^*}(x,i)\le \hat \cJ_{e}^{*}(x,i)+\epsilon,\quad\hat \cJ_{e}^{\bar v_{\epsilon}^*}\le \hat \cJ_{e}^{*}(x,i)+\epsilon\quad \text{and}\quad \hat \cJ_{e}^{\hat v_{\epsilon}^*}(x,i)\le \hat \cJ_{e}^{*}(x,i)+\epsilon\end{equation}
   for all \( (x,i)\in\Rd\times\mathbb{S}\).
\end{theorem}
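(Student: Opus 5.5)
The plan follows the discounted-cost argument: first obtain an optimal stationary Markov control for the exit-time problem, then use the continuity result \cref{thm:exit-time-continuity} together with the density results of \cref{Denseness}.

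\textbf{Step 1: an optimal stationary control.} The exit-time HJB equation
\[
\min_{\zeta \in \Act} \bigl[\sL_\zeta \varphi - \beta\varphi + c\bigr] = 0 \ \text{ in } \mathcal{O}\times\mathbb{S}, \qquad \varphi = h \ \text{ on } \partial\mathcal{O}\times\mathbb{S},
\]
has a solution $\varphi\in\Sob^{2,p}(\mathcal{O}\times\mathbb{S})$. This follows from the weakly coupled elliptic system theory already used via \cite{AS}*{Theorem 2.1} and \cite{rathia}, since $\mathcal{O}$ is bounded, the nondegeneracy (A3) is uniform on $\bar{\mathcal{O}}$, $\beta\ge 0$, and the coupling coefficients $m_{ij}\ge 0$ for $i\neq j$. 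Because $\Act$ is compact and the bracket is continuous in $\zeta$, a measurable selection theorem (Beneš/Filippov) gives a minimizer $v^*\in\Usm$. On $\mathcal{O}^c$ we define $v^*$ arbitrarily; this is irrelevant because of the stopping at $\uptau(\mathcal{O})$.

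\textbf{Step 2: verification.} We apply It\^o--Krylov to $e^{-\int_0^t\beta}\varphi(X_t,S_t)$ up to $\uptau(\mathcal{O})\wedge T$. The time $\uptau(\mathcal{O})$ has finite expectation, uniformly over controls, by nondegeneracy on the bounded domain. Using the HJB inequality for an arbitrary $U\in\Uadm$ and equality for $v^*$, and then letting $T\to\infty$, we get
\[
\varphi = \hat\cJ_e^{v^*} = \hat\cJ_e^{*} \quad \text{on } \mathcal{O}\times\mathbb{S}.
\]
For points outside $\mathcal{O}$ the exit time is $0$, so there the cost reduces to $h$ for every policy, and the inequality holds trivially.

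\textbf{Step 3: approximation.} By the lemma and theorems of \cref{Denseness}, there exist sequences of finite-action policies $v_n$, piecewise-constant policies $\bar v_n$, and Lipschitz policies $\hat v_n$ in $\Usm$, all converging to $v^*$ in the Borkar topology. By \cref{thm:exit-time-continuity}, $\hat\cJ_e^{v_n}\to\hat\cJ_e^{v^*}$, and similarly for $\bar v_n$ and $\hat v_n$.

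\textbf{Main obstacle: uniformity in $(x,i)$.} The statement asks for a single $\epsilon$ valid for all $(x,i)$, while continuity is only pointwise. The proof of \cref{thm:exit-time-continuity} actually gives more: convergence in $\cC^{1,\beta}$ along subsequences, and uniqueness of the limit upgrades this to the full sequence. To get uniformity up to the boundary, we use that the $\Sob^{2,p}(\mathcal{O}\times\mathbb{S})$ bounds are global (up to $\partial\mathcal{O}$, with the boundary data $h$ fixed). This requires $\partial\mathcal{O}$ to be $\cC^{1,1}$, an assumption implicitly used earlier. With $p>d$, Morrey's embedding makes $\Sob^{2,p}\hookrightarrow\cC(\bar{\mathcal{O}})$ compact, so the convergence is uniform on $\bar{\mathcal{O}}\times\mathbb{S}$. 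Choosing $n$ large then gives all three inequalities in \eqref{nearopt-exit} for every $(x,i)$.
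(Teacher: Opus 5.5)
Your proposal is correct and follows essentially the paper's route. The paper also combines an optimal stationary Markov control, the density results of \cref{Denseness}, and the continuity in \cref{thm:exit-time-continuity}; where you rederive the optimal control from the HJB equation via measurable selection and verification, the paper simply cites \cite{rathia}*{Theorem~6.1}. Your extra argument, that the global $\Sob^{2,p}(\mathcal{O}\times\mathbb{S})$ bound plus compact embedding gives uniform convergence on $\bar{\mathcal{O}}\times\mathbb{S}$, fills a detail the paper leaves implicit, since a single policy must be $\epsilon$-optimal at every $(x,i)$.
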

\begin{proof}
By \cite{rathia}*{Theorem~6.1}, an optimal stationary Markov control $v^*\in\Usm$ exists. 
The result follows from the density of the structured policy classes in $\Usm$ together with continuity of the mapping $v\mapsto \hat{\cJ}_e^{v}$.
\end{proof}
Under the Lyapunov stability assumption, near optimality also holds for the ergodic criterion.

\subsection{Ergodic Cost}
\begin{theorem}
    Suppose the assumptions \hyperlink{A1}{(A1)}- \hyperlink{A6}{(A6)} hold. Then for every $\epsilon>0$ there exist finite-action \(v_{\epsilon}^*\), piecewise-constant \(\bar v_{\epsilon}^{*}\), and a Lipschitz policy \(\hat v_{\epsilon}^*\) in $\Usm$ which is $\epsilon$-optimal for the ergodic cost. That is,
    \begin{equation}\label{nearopt-ergodic}
         \sE_{x,i}(c,v_{\epsilon}^*)\le \sE^{*}(c)+\epsilon,\quad\sE_{x,i}(c,\bar v_{\epsilon}^*)\le \sE^{*}(c)+\epsilon\quad \text{and}\quad \sE_{x,i}(c,\hat v_{\epsilon}^*)\le \sE^{*}(c)+\epsilon
         \end{equation}
         for all \((x,i)\in\Rd\times\mathbb{S}\)
\end{theorem}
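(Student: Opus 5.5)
The plan mirrors the discounted and exit-time cases: produce an optimal stationary Markov control for the ergodic problem, then approximate it by structured policies using density together with the continuity in \cref{ErgodLyapRobu1}.

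\textbf{Step 1: existence of an optimal stationary control.} Under \hyperlink{A1}{(A1)}--\hyperlink{A6}{(A6)} I will invoke the ergodic HJB theory for controlled RSDPs (as in \cite{rathia}, and in the spirit of \cite{Ghosh97}). This yields a pair $(V^*,\rho^*)\in \Sobl^{2,p}(\Rd\times\mathbb{S})\cap\sorder(\Lyap)\times\RR$ solving
\[
\rho^*=\min_{\zeta\in\Act}\bigl[\sL_\zeta V^*(x,i)+c(x,i,\zeta)\bigr],
\]
with $\rho^*=\sE^*(c)$. Any measurable minimizing selector $v^*\in\Usm$ is then optimal, that is, $\sE_{x,i}(c,v^*)=\sE^*(c)$ for all $(x,i)$.

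To justify the identity $\sE_{x,i}(c,v^*)=\rho^*$ I will use \cref{TErgoExisPoiss1}(i). Specifically, $V^*$ solves the Poisson equation \cref{TErgoExisPoiss1A} for $v^*$, and by uniqueness in that theorem $\rho^{v^*}=\rho^*$. Moreover, the ergodic cost $\sE_{x,i}(c,v)=\rho^v$ does not depend on $(x,i)$ for any $v\in\Usm$.

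\textbf{Step 2: structured approximating sequences.} Apply the density results of \cref{Denseness} to $v^*$. These give sequences of finite-action policies $v_n$ (the quantized construction \cref{df-finiteaction}), piecewise-constant policies $\bar v_n$, and Lipschitz policies $\hat v_n$ in $\Usm$, each converging to $v^*$ in the Borkar topology.

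\textbf{Step 3: pass to the limit.} By \cref{ErgodLyapRobu1}, more precisely by its proof, which shows $\rho^{v_n}\to\rho^{v}$, we get $\rho^{v_n}\to\rho^{v^*}=\sE^*(c)$, and likewise for $\bar v_n$ and $\hat v_n$. Since $\sE_{x,i}(c,v_n)=\rho^{v_n}$ for every $(x,i)$ by \cref{TErgoExisPoiss1}(i), for $n$ large we have $\sE_{x,i}(c,v_n)\le\sE^*(c)+\epsilon$ uniformly in $(x,i)$. Setting $v_\epsilon^*=v_n$, $\bar v_\epsilon^*=\bar v_n$ and $\hat v_\epsilon^*=\hat v_n$ for such $n$ gives \cref{nearopt-ergodic}.

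\textbf{Main obstacle.} The delicate point is Step 1, namely existence of an optimal $v^*\in\Usm$ attaining $\sE^*(c)$, which requires the inf over all admissible controls to be achieved by a stationary Markov one. If a direct citation is unavailable, I will use the vanishing-discount approach. Normalize $\bar V_\alpha:=V_\alpha-V_\alpha(0,1)$; by the Lyapunov condition \hyperlink{A6}{(A6)} and the uniform Sobolev estimates already used in \cref{ErgodLyapRobu1}, $\bar V_\alpha$ is bounded in $\Sob^{2,p}_{\text{loc}}$. Also $\alpha V_\alpha(0,1)$ is bounded by $M$. Extracting limits exactly as in the proof of \cref{ErgodLyapRobu1} produces a solution of the ergodic HJB equation. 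A verification argument via Itô–Krylov, using $V^*\in\sorder(\Lyap)$ to control $\Exp[V^*(X_T,S_T)]/T\to0$ under any admissible $U$, then shows that $\rho^*\le\sE_{x,i}(c,U)$ for all $U\in\Uadm$ and all $(x,i)$. This gives $\rho^*=\sE^*(c)$.
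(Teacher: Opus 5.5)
Your proposal is correct and follows essentially the same route as the paper. The paper cites Theorems~A.2--A.3 of \cite{rathia} for the existence of an optimal stationary Markov control, then combines the density results of \cref{Denseness} with the continuity of $v\mapsto\sE(v)$ from \cref{ErgodLyapRobu1}. You additionally make explicit that $\sE_{x,i}(c,v)=\rho^{v}$ does not depend on $(x,i)$, which gives the uniformity in $(x,i)$ that the paper leaves implicit, and your vanishing-discount fallback is unnecessary given that citation.
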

\begin{proof}
By the \cite[Theorems~A.2--A.3]{rathia}, an optimal stationary Markov control exists. 
The conclusion again follows from the density of the structured policies and continuity of $v\mapsto \sE^{v}(c)$.
\end{proof}

\subsection{Finite-Horizon Cost}\hspace{50em}
\vspace{0.5em}

For the finite-horizon case, we have near optimality of piecewise constant and Lipschitz Markov policies.
\begin{theorem}\label{nearop-finite horizon}
    Suppose the assumption \hyperlink{A1}{(A1)}-\hyperlink{A3}{(A3)} and \hyperlink{A5}{(A5)} holds. Then for every $\epsilon>0$ there exists a piecewise constant policy \(\bar v_{\epsilon}^{*}\), and a Lipschitz policy \(\hat v_{\epsilon}^*\) in \(\Um\) satisfying
    \begin{equation}\label{nearopt-finite} \cJ_{T}^{\bar v_{\epsilon}^*}(x,i)\le \cJ_{T}^{*}(x,i)+\epsilon\quad \text{and}\quad \cJ_{T}^{\hat v_{\epsilon}^*}(x,i)\le \cJ_{T}^{*}(x,i)+\epsilon \end{equation}
   for all \( (x,i)\in\Rd\times\mathbb{S}\).  
\end{theorem}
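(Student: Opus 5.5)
The plan follows the pattern of the discounted and exit-time cases: first obtain an optimal Markov policy, then approximate it by structured policies using the density results of \cref{Denseness}, and finally transfer the approximation to the cost via \cref{thm:finite-horizon-robust}.

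\emph{Step 1: an optimal Markov policy.} By the parabolic regularity used in the proof of \cref{thm:finite-horizon-robust} (the result of \cite{DTSB}, now applied to the HJB system with the minimum over $\zeta\in\Act$), the finite-horizon HJB system
\[
\partial_t\psi+\min_{\zeta\in\Act}\bigl[\sL_\zeta\psi+c(x,i,\zeta)\bigr]=0,\qquad \psi(T,\cdot,\cdot)=c_{_T},
\]
has a solution $\psi\in\Sobl^{1,2,p}((0,T)\times\Rd\times\mathbb{S})$. A measurable selection of the minimizer gives a Markov policy $v^*\in\Um$. The It\^o--Krylov formula then shows that $\psi(0,x,i)=\cJ_T^{v^*}(x,i)\le \cJ_T^U(x,i)$ for every $U\in\Uadm$, so $\cJ_T^{v^*}=\cJ_T^*$ (cf.\ \cite{FH}).

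\emph{Step 2: density and continuity.} By the density theorems for $\Um$ (the piecewise-constant result from \cite{SPSY}*{Theorem 6.2} and the Lipschitz result from \cite{SPSY2}*{Theorem 3.2}), there are sequences $\bar v_n$ and $\hat v_n$ of structured Markov policies converging to $v^*$ in the Borkar topology on $\Um$. By \cref{thm:finite-horizon-robust}, $\cJ_T^{\bar v_n}(x,i)\to\cJ_T^{v^*}(x,i)$ and $\cJ_T^{\hat v_n}(x,i)\to\cJ_T^{v^*}(x,i)$ pointwise.

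\emph{Step 3: uniformity in $(x,i)$ (the main obstacle).} Pointwise convergence gives \eqref{nearopt-finite} only for a fixed $(x,i)$, whereas the statement requires it for all $(x,i)$ with a single policy. My first attempt is to strengthen the convergence from the proof of \cref{thm:finite-horizon-robust}.

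On compact sets this is straightforward. The uniform $\Sob^{1,2,p}$ bounds with $p>d+2$ give H\"older equicontinuity, and together with uniqueness of the limit this yields convergence $\psi_n\to\psi$ locally uniformly on $[0,T]\times\Rd\times\mathbb{S}$.

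Outside a large ball I would estimate the cost difference directly. By the It\^o--Krylov formula applied to $\psi$ under $\bar v_n$, the difference $\cJ_T^{\bar v_n}-\cJ_T^{v^*}$ is the expectation of a mismatch term. That term involves $(b(\cdot,v^*)-b(\cdot,\bar v_n))\cdot\grad\psi$ and the analogous $m_{ij}$ and $c$ terms, evaluated along the trajectory. Since $\|\grad\psi\|$ is only locally bounded, the control is through the density of the process: nondegeneracy yields Gaussian-type heat-kernel bounds locally, so the mismatch integrated against the transition density tends to zero by weak convergence in the Borkar topology.

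If the tail region cannot be handled this way, I would settle for the conclusion uniformly on compact sets. As a fallback, I would choose the approximants as in \cite{SPSY} so that they coincide with $v^*$ outside a large ball. The uniform estimate would then be needed only on compacts, while boundedness of $c$ and $c_{_T}$ handles the tails via escape-probability estimates from (A2).
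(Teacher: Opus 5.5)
Your Steps 1 and 2 are exactly the paper's proof. The paper takes an optimal Markov control from \cite{rathia}*{Theorem~5.1}, where your HJB/measurable-selection construction plays the same role. It then simply asserts that the claim follows from density in $\Um$ and \cref{thm:finite-horizon-robust}. You are right that this only yields pointwise $\epsilon$-optimality. \cref{thm:finite-horizon-robust} gives $\cJ_T^{v_n}(x,i)\to\cJ_T^{v^*}(x,i)$ for each fixed $(x,i)$, so the policy produced depends on $(x,i)$. After your correct upgrade to locally uniform convergence of the cost-to-go functions via the $\Sob^{1,2,p}$ bounds with $p>d+2$, it depends on a chosen compact set instead. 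The paper does not address this.

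However, your Step 3 does not close the gap. In the heat-kernel route, convergence in the Borkar topology is convergence against each fixed $f\in\Lp^1\cap\Lp^2$. It is uniform over $\Lp^1$-relatively compact families of test functions, but the family generated by the transition densities for all starting points $x\in\Rd$ is in general not relatively compact: as $|x|\to\infty$ its mass may leave every compact set. Moreover, these densities are those of the process under $\bar v_n$, hence depend on $n$. Also, (A3) gives ellipticity only on bounded sets, so global Gaussian bounds are not available. The fallback fails as well. A policy equal to $v^*$ outside $\sB_R$ is not piecewise constant or Lipschitz unless $v^*$ already is there. And (A2) only bounds $\langle b,x\rangle^+$, so it says nothing about how quickly a process started far away is pushed into $\sB_R$. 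Restricting to compact sets proves a weaker theorem than the one stated.

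What handles the tails is localization by the strong Markov property, not a tail estimate. Suppose two Markov policies $v,v'$ agree outside $[0,\infty)\times D\times\bS$ with $D$ compact. Then the strong solutions driven by the same $W$ and $\cP$ coincide up to the hitting time of $D$, whence
\[
\sup_{(x,i)\in\Rd\times\bS}\bigl|\cJ_T^{v'}(x,i)-\cJ_T^{v}(x,i)\bigr|\le\sup_{(t,y,j)\in[0,T]\times D\times\bS}\bigl|\psi'(t,y,j)-\psi(t,y,j)\bigr|,
\]
where $\psi,\psi'$ are the cost-to-go functions. Whenever $\tilde u_m\to u$ in the Borkar topology and $0\le\chi\le1$, we have $\chi\tilde u_m+(1-\chi)w\to\chi u+(1-\chi)w$. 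So your compact-set convergence lets you replace $v^*$ by structured pieces on a locally finite cover $D_1,D_2,\dots$, with errors $\epsilon2^{-k}$ that are uniform on $\Rd\times\bS$. The bound $\Exp\sup_{t\le T}|X_t|^2\le C_T(1+|x|^2)$ from (A2) then lets you pass to the limit policy at each fixed $(x,i)$. The resulting policy, however, is only piecewise constant on a partition whose cells may shrink at infinity, resp.\ only locally Lipschitz.

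For a single globally Lipschitz policy, uniformity on all of $\Rd$ can genuinely fail under (A1)--(A3), (A5). Take $d=1$, $\Act=[-1,1]$, $b=\zeta$, $c\equiv0$, and $\upsigma>0$ decaying fast at infinity. Let $c_T\in[0,1]$ be smooth, vanishing on wells of width $w_k\downarrow0$ around each integer $k$ and equal to $1$ away from them. A bang-bang feedback brings every starting point into a well and holds it there, so $\cJ_T^*\to0$ at infinity. On the other hand, the flow of an $L$-Lipschitz drift is $e^{LT}$-bi-Lipschitz and the noise is negligible far out. Hence, for large $k$, most starting points near $k$ end outside the wells and have cost close to $1$.
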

\begin{proof}
By \cite[Theorem~5.1]{rathia}, an optimal Markov control exists. 
The result follows from the density and the continuity of $v\mapsto \cJ_T^{v}$.

\begin{remark}
Finite-action policies provide a practical bridge between continuous control problems and implementable algorithms. By reducing the action space to a finite set, they enable tractable numerical optimization, dynamic programming approximations, and reinforcement learning implementations while retaining near-optimal performance (e.g., see \cite{MR3722422}).
\end{remark}

\begin{remark}
The preceding results provide a concrete application of the density of piecewise-constant policies. 
In particular, discretization of controlled regime-switching diffusions under piecewise-constant controls yields a discrete-time model whose state process and value functions converge to their continuous-time counterparts. 
This demonstrates how structural density results translate into rigorous numerical approximations. 
Furthermore, when additional regularity such as Lipschitz continuity of policies is imposed, one may expect analogous approximation results for more challenging performance criteria, such as ergodic costs 
(see \cite{pradhan2025nearoptimalitydiscretetimeapproximations}*{Theorem 5.2}).
\end{remark}
\end{proof}
\section{Application: Markov chain approximations}\label{Application}

\subsection{The Markov chain approximation for the finite horizon cost}\hspace{30em}
\vspace{0.5em}

In this section, we exploit the density of piecewise-constant policies to construct a discrete-time approximation of the controlled RSDP \cref{E1.1}. 
We introduce an Euler--Maruyama discretization under piecewise-constant controls and prove that the corresponding finite-horizon cost converges to its continuous-time counterpart. 
Moreover, we show that optimal policies obtained from the discrete-time model are near-optimal for the continuous-time system.

To facilitate the numerical approximation, we impose the following stronger regularity assumptions.
Throughout this section, the coefficients are assumed to be globally Lipschitz and bounded. Also, we assume that the switching rates $m_{ij}$'s are independent of the actions.

\begin{itemize}

\item[\hypertarget{B1}{(B1)}]
\emph{Global Lipschitz continuity and boundedness.}
There exists $C_L,\, M_2>0 $ such that
\begin{align*}
&|b(x,i,\zeta)-b(y,i,\zeta)|^2
+\|\upsigma(x,i)-\upsigma(y,i)\|^2
+|m_{ij}(x)-m_{ij}(y)|^2
\le C_L |x-y|^2,\\
&|b(x,i,\zeta)|^2
+\|\upsigma(x,i)\|^2
+|m_{ij}(x)|^2
\le M_2
\end{align*}
for all $x,y\in\Rd$, $i,j\in\mathbb{S}$, and $\zeta\in\Act$.

\item[\hypertarget{B2}{(B2)}]
\emph{Cost regularity.}
The running cost $c$ and terminal cost $c_T$ are globally Lipschitz and uniformly bounded, i.e., there exist $C_{L_c},M_3>0$ such that for all \(x,y\in\Rd,\;i\in\mathbb{S}\;\;\text{and}\;\zeta,\zeta'\in\Act\) :
\begin{align*}
|c(x,i,\zeta)-c(y,i,\zeta')| 
&\le C_{L_c}(|x-y|+|\zeta-\zeta'|),
\qquad |c_T(x,i)-c_T(y,i)|\le C_{L_c}(|x-y|)
\\
|c(x,i,\zeta)| + |c_T(x,i)| &\le M_3.
\end{align*}

\end{itemize}
\begin{remark}
The assumption that the switching rates are independent of the control action $\zeta$ is essential for proving the convergence of the mismatch probability $\mathbb{P}(\theta^h \le T)$; see \cref{mismatch_bound}. Without this assumption, it is difficult to establish the required convergence estimate.
\end{remark}
\begin{remark}
We assume in \hyperlink{B2}{(B2)} that the running cost is Lipschitz continuous in the control action $\zeta$ in order to derive convergence rates for the finite-horizon cost. However, convergence of the finite-horizon cost still holds without this assumption, although no explicit rate can be obtained in that case.
\end{remark}
 We will show that the finite horizon cost can be approximated by the discrete time finite horizon cost by using Euler-Maruyama approximation.
\subsubsection{\bf Euler-Maruyama approximation
}
Let $h>0$ be the time step and define \(t_h\df\lfloor\frac{t}{h}\rfloor h\) where \(\lfloor\; \rfloor\) denotes the floor function. For a piecewise–constant control \(U^h\in\Uadm\), we approximate the state-dependent regime-switching SDE \cref{E1.1} using the Euler-Maruyama scheme.

The continuous component is given by
\begin{equation}\label{s-3.1}
d X^h_t= b(X_{t_h}^h,S_{t_h}^h,U_{t_h}^h)\D t+\upsigma(X_{t_h}^h,S_{t_h}^h)\D W_t,
\end{equation}
and the discrete component is defined through the Poisson random measure \(\cP(\D t,\D z)\) by
\begin{equation}\label{s-3.2}
S^h_t=i+\int_0^t\int_{\RR_+}h(X^h_{s_h},S^h_{s-},z)\cP(\D s,\D z),
\end{equation}
where $\cP(dt,dz)$ is the same Poisson random measure introduced in \cref{E1.1} to determine the process $S_t$ with $S_0=i$. We denote by $(X_t^h,S^h_t)$ the EM approximation of $(X_t,S_t)$ for some given $h$, with initial condition $(X^h_0,S^h_0)=(X_0,S_0)=(x,i)$.  Then, by the Skorokhod's representation \eqref{s-3.2}, for $\delta\downarrow 0$
\begin{equation}\label{s-3.2b}
  \mathbb{P}(S^h_{t_h+\delta}=k|S^h_{t_h}=j, \,X^h_{s_h},\, s\le t)=\begin{cases}
  m_{jk}(X^h_{t_h})\delta+o(\delta),& k\neq j,\\
  1+m_{jj}(X^h_{t_h})\delta+o(\delta),& k=j,
  \end{cases}
\end{equation}
Thus $S^h_{t}$ is a continuous-time pure jump process whose transition rate depends on the frozen state  $X_{t_h}^h$. 
Moreover, over each interval \([kh,(k+1)h)\), the dynamics of 
$X_t^h$ depend on the embedded chain $(S^h_{kh})_{k\geq 1}$ of the process $S^h_{t}$ and the control $U_{t_h}^h$.

Under assumption  \hyperlink{B1}{(B1)} existence and uniqueness of the solution to \eqref{s-3.1}-\eqref{s-3.2} follow by a standard stepwise construction on each interval $ [kh, (k+1)h)$, $k\geq 0$.

\begin{remark}
    In view of \cref{s-3.2b} one can approximate the transition probability matrix of \(S^h_{(k+1)h}\)   by \(I+hQ(x)\) when \(X^h_{kh}=x\) i.e., if
\(
\mathcal G_n:=\sigma\{(X_{kh}^h,S_{kh}^h):0\le k\le n\}
\)
be the filtration generated by the approximating process \cref{s-3.1}-\cref{s-3.2}. Then the regime component
$\{S_{kh}^h\}$ is a controlled Markov chain on $\mathbb{S}$ satisfying
\begin{equation}
\mathbb P\!\left(
S_{(k+1)h}^h=j
\mid
X_{kh}^h=x,\,
S_{kh}^h=i,\,
\mathcal G_n
\right)
=
p_{ij}(x),
\qquad i,j\in\mathbb{S},
\end{equation}
where the transition matrix is chosen by the local consistency condition
\begin{equation}
P(x)=\big(p_{ij}(x)\big)_{i,j\in\mathbb{S}}=I+hQ(x),
\end{equation}
that is, for $j\neq i$,
\[
p_{ij}(x)=h\,m_{ij}(x),
\qquad
p_{ii}(x)
=
1-h\sum_{j\neq i} m_{ij}(x),
\]
\end{remark}
The main difficulty in the analysis of the Euler--Maruyama approximation for state-dependent RSDPs, compared with the state-independent case, lies in estimating the mismatch between the true and numerical switching processes. In particular, a key step is to control
\begin{equation}\label{o-1}
\int_0^t \mathbb P(S_s\neq S_s^h)\,ds .
\end{equation}
This problem was treated in Shao \cite{ShaoEM}*{Lemma 3.2}; However, within the framework of Shao's approach, explicit convergence rates appear to be obtainable only in the special case of additive noise. To overcome this difficulty, we employ the technique developed in \cite{nguyen2025hybrid}*{Chapter 5}.

\begin{theorem}[Strong convergence of the Euler-Maruyama scheme]\label{t-3.3}
Assume \hyperlink{B1}{(B1)} holds. Let $(X_t,S_t)$  solve \eqref{E1.1} under \(U\in\Uadm\) and let $(X^h_t,S^h_t)$ be the solution of \eqref{s-3.1}, \eqref{s-3.2} under a piecewise constant policy \(U^h\in\Uadm\) which approximates \(U\) weakly \(a.s\) (existence follows from the density results see \cref{p.w_denseness}). 
Then, for every $T>0$, there exists a constant $C_{_{EM}}>0$  independent of $h$, \(\gamma\in(0,\frac{1}{2})\), such that
\begin{equation}\label{s-3.3.1}
\Exp\!\left[\sup_{0\le t\le T}|X_t-X_t^h|^2\right]
\le C_{_{EM}}\,(h^{\gamma}+(\eta_{1,h})^{\gamma}),
\end{equation}
where $\eta_{1,h}\to0$ as $h\to0$. Consequently,

\begin{equation}\label{s-3.4}
\lim_{h\to0}\Exp\!\left[\sup_{0\le t\le T}|X_t-X_t^h|^2\right]=0.
\end{equation}
\end{theorem}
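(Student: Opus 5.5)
We will couple the two systems through the same Brownian motion $W$ and the same Poisson random measure $\cP$. Write $e_t\df X_t-X_t^h$ and decompose
\begin{align*}
e_t&=\int_0^t\bigl[b(X_s,S_s,U_s)-b(X^h_{s_h},S^h_{s_h},U^h_{s_h})\bigr]\D s+\int_0^t\bigl[\upsigma(X_s,S_s)-\upsigma(X^h_{s_h},S^h_{s_h})\bigr]\D W_s .
\end{align*}
Each integrand will be split into four pieces. The first is a state error, $X_s$ versus $X^h_s$. The second is a time-freezing error, $X^h_s$ versus $X^h_{s_h}$, which is $O(h)$ in $L^2$ by the boundedness in \hyperlink{B1}{(B1)}. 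The third is a regime mismatch, $S_s$ versus $S^h_s$ versus $S^h_{s_h}$. The fourth is a control error, $U_s$ versus $U^h_{s_h}$, and it appears only in the drift.

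For the regime pieces we use boundedness of $b$ and $\upsigma$. This gives contributions of the form $C\int_0^t\mathbb P(S_s\neq S^h_s)\D s$ and $C\int_0^t\mathbb P(S^h_s\neq S^h_{s_h})\D s$. The latter is $\le Ch$, since the rates are bounded by $M$.

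For the control piece we set
\[
\eta_{1,h}\df \Exp\sup_{t\le T}\Bigl|\int_0^t\bigl(b(X_s,S_s,U_s)-b(X_s,S_s,U^h_{s_h})\bigr)\D s\Bigr|^2 ,
\]
or an analogous quantity. It tends to $0$ by the a.s.\ weak approximation of $U$ by $U^h$ together with boundedness and dominated convergence. Applying Doob/BDG and Cauchy--Schwarz, we arrive at
\[
\Exp\sup_{r\le t}|e_r|^2\le C\int_0^t\Exp\sup_{r\le s}|e_r|^2\D s+C\Bigl(h+\eta_{1,h}+\int_0^t\mathbb P(S_s\neq S^h_s)\D s\Bigr).
\]

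The main obstacle is bounding the mismatch term $\int_0^t\mathbb P(S_s\neq S^h_s)\D s$ in terms of $e$, as indicated around \cref{o-1}. Following \cite{nguyen2025hybrid}*{Chapter 5}, we introduce the first mismatch time $\theta^h\df\inf\{t:S_t\neq S^h_t\}$. Before $\theta^h$ both chains sit at the same state $j$. A jump of one and not the other then requires the Poisson point $z$ to land in the symmetric difference $\Delta_{jk}(X_s)\triangle\Delta_{jk}(X^h_{s_h})$. Because the intervals share left endpoints and the rates do not depend on the control, this set has Lebesgue measure $\sum_k|m_{jk}(X_s)-m_{jk}(X^h_{s_h})|\le C|X_s-X^h_{s_h}|$ by \hyperlink{B1}{(B1)}. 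Hence
\[
\mathbb P(\theta^h\le t)\le C\int_0^t\Exp|X_{s\wedge\theta^h}-X^h_{s_h\wedge\theta^h}|\D s .
\]
This is only a first-moment bound on $e$, whereas the Gronwall inequality is quadratic. We therefore work with the stopped error $e_{t\wedge\theta^h}$ and estimate $\Exp|e|\le(\Exp|e|^2)^{1/2}$. After $\theta^h$ we use the crude bound $\Exp\sup|e|^2\le C$ on $\{\theta^h\le T\}$, obtained by Hölder with exponents chosen so that
\[
\Exp\bigl[\sup|e|^2\mathbf 1_{\{\theta^h\le T\}}\bigr]\le C\,\mathbb P(\theta^h\le T)^{1-1/q}.
\]
Combining these yields a nonlinear inequality of the type $u(t)\le C\int_0^t u+C(h+\eta_{1,h})+C\bigl(\int_0^t u^{1/2}\bigr)^{\kappa}$. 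A Bihari-type or bootstrap argument then gives $u(T)\le C(h^\gamma+\eta_{1,h}^\gamma)$. The loss of exponent, down to $\gamma<1/2$, comes from the square root and the Hölder step. This proves \cref{s-3.3.1}. Since $\eta_{1,h}\to0$, \cref{s-3.4} follows.
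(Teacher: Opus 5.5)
Your ingredients are the paper's: coupling through the same $W$ and $\cP$, the first mismatch time $\theta^h$, the bound $\mathbb P(\theta^h\le t)\le C\,\Exp\int_0^{t\wedge\theta^h}|X_s-X^h_{s_h}|\,\D s$ from the shared left endpoints of the intervals $\Delta_{jk}$, and H\"older on $\{\theta^h\le T\}$. The gap is in how you combine them. An inequality of the form $u(t)\le C\int_0^t u+C(h+\eta_{1,h})+C\bigl(\int_0^t u^{1/2}\bigr)^{\kappa}$ with $\kappa\in(0,1]$ cannot be closed by a Bihari or bootstrap argument. The nonlinearity is sublinear at $0$, so Osgood's condition fails. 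Concretely, $u(t)=a\,t^{2\kappa/(2-\kappa)}$ with $a>0$ small satisfies the inequality even when $h=\eta_{1,h}=0$. So the inequality alone does not force $u(T)\to0$, let alone give the rate $h^{\gamma}+\eta_{1,h}^{\gamma}$.

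The missing observation is that the stopped error closes on its own, so no circularity arises. For $s<\theta^h$ the regimes coincide, so the term $\mathbf 1_{\{S_s\neq S^h_s\}}$ drops out of the stopped estimate. Hence $w(t)\df\Exp\sup_{r\le t\wedge\theta^h}|e_r|^2$ satisfies the linear inequality $w(t)\le C\int_0^t w(s)\,\D s+C(h+\eta_{1,h})$. The freezing term and the numerical-switching term ($S^h_s$ versus $S^h_{s_h}$) are $O(h)$ and do not involve $\theta^h$. Gronwall gives $w(T)\le C(h+\eta_{1,h})$. Only then do you feed this into your mismatch bound, which yields $\mathbb P(\theta^h\le T)\le C(h^{1/2}+\eta_{1,h}^{1/2})$.

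The paper finishes with the H\"older split on $\{\theta^h\le T\}$ with $p=1/(2\gamma)$; this is where the exponent drops to $\gamma<1/2$. Alternatively, insert $\int_0^t\mathbb P(S_s\neq S^h_s)\,\D s\le T\,\mathbb P(\theta^h\le T)$ into your own unstopped inequality and apply linear Gronwall. This already gives $C(h^{1/2}+\eta_{1,h}^{1/2})$. Either way the estimates flow in one direction only: stopped error, then mismatch probability, then full error. No nonlinear integral inequality is needed.
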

\begin{proof}
  Set $Z_t=X_t-X^h_t$ for $t\geq 0$, then $Z_0=X_0-X^h_0=0$  and
  \begin{equation*}
    Z_t=\int_0^t (b(X_s,S_s,U_s)-b(X^h_{s_h},S_{s_h}^h,U_{s_h}^h))\D s\;+\;\int_0^t (\upsigma(X_s,S_s)-\upsigma(X^h_{s_h},S_{s_h}^h))\D W_s,\ \ t>0.
  \end{equation*}
  Define first mode mismatch time \(\theta^h\df\inf\{t\ge0\;:\; S_t\neq S^h_t\}\). Since we have \(S_0=S^h_0\in\mathbb{S}\), \(\mathbb{P}(S_{\theta^h}\neq S^h_{\theta^h})>0\). Now we decompose the error according to whether the switching components remain coupled up to time $T$:
\begin{equation}\label{EMdecompo}
  \Exp\left[\sup_{0\leq t\leq T}|Z_t|^2
    \right]\leq \Exp\left[\sup_{0\leq t\leq T}|Z_t|^2\mathbf{1}_{\{\theta^h\le T\}}+\sup_{0\leq t\leq T}|Z_t|^2\mathbf{1}_{\{\theta^h>T\}}\right]
  \end{equation}
We estimate both terms separately; first, consider the second term. On $\{\theta^h>T\}$ the regime processes agree, so up to $T\wedge \theta^h$ we may compare only the diffusion components. We first do it for arbitrary \(t\in (0,T]\)
\begin{align}
    \Exp\left[\sup_{0\leq u\leq t}|Z_u|^2\mathbf{1}_{\{\theta^h>t\}}\right]
    &\leq \Exp\left[\sup_{0\leq u\leq t\wedge \theta^h}|Z_u|^2\right]\nonumber
\end{align}
  Using the Lipschitz continuity of $b$ and $\upsigma$ \hyperlink{B1}{(B1)} together with the BDG (
Burkholder–Davis–Gundy) inequality, we obtain
  \begin{align}\label{s-3.5}
    \Exp\left[\sup_{0\leq u\leq t\wedge \theta^h}|Z_u|^2\right]
    &\leq 2\Exp\bigg[\sup_{0\leq u\leq t\wedge \theta^h}\Big|\int_0^{u}b(X_s,S_s,U_s)-b(X^h_{s_h},S_{s_h}^h,U_{s_h}^h)\D s \Big|^2\bigg]\;\nonumber\\
    &\quad+\;2\Exp\bigg[\sup_{0\le u\le t\wedge \theta^h}\Big|\int_0^{u}\upsigma(X_s,S_s)-\upsigma(X^h_{s_h},S_{s_h}^h)\D W_s\Big|^2\bigg]\nonumber\\
    &\leq 8t\Exp\bigg[\int_0^{t\wedge \theta^h}\Big\{|b(X_s,S_s,U_s)-b(X^h_s,S_s,U_s)|^2+|b(X^h_s,S_s,U_s)-b(X^h_{s_h},S_s,U_s)|^2\nonumber\\
    &\quad+|b(X^h_{s_h},S_s,U_s)-b(X^h_{s_h},S^h_s,U_s)|^2 +\! |b(X^h_{s_h},S^h_s,U_s)\!-\!b(X^h_{s_h},S^h_{s_h},U_s)|^2\Big\}\D s\bigg]\!\nonumber\\
    &\quad+2\Exp\bigg[\sup_{0\leq u\leq t\wedge \theta^h}\Big|\int_0^{u}b(X^h_{s_h},S^h_{s_h},U_s)\!-\! b(X^h_{s_h},S_{s_h}^h,U^h_{s})\nonumber\\
    &\quad+b(X^h_{s_h},S^h_{s_h},U^h_s)\!-\! b(X^h_{s_h},S_{s_h}^h,U^h_{s_h})\Big\}\D s \Big|^2\bigg]\nonumber\\
& \quad+2C_{_{BDG}}\Exp\bigg[\int_0^{t\wedge \theta^h}\|\upsigma(X_s,S_s)-\!\upsigma(X^h_{s_h},S^h_{s_h})\|^2\D s\bigg] \nonumber\\
&\leq 8t\Exp\bigg[\int_0^{t\wedge \theta^h}\Big\{|b(X_s,S_s,U_s)-b(X^h_s,S_s,U_s)|^2+|b(X^h_s,S_s,U_s)-b(X^h_{s_h},S_s,U_s)|^2 \nonumber\\
    &\quad+|b(X^h_{s_h},S_s,U_s)-b(X^h_{s_h},S^h_s,U_s)|^2 +\! |b(X^h_{s_h},S^h_s,U_s)\!-\!b(X^h_{s_h},S^h_{s_h},U_s)|^2\Big\}\D s\bigg]\!+\eta_{1,h}\nonumber\\
    &\quad+8C_{_{BDG}}\Exp\bigg[\int_0^{t\wedge \theta^h}\Big\{|\upsigma(X_s,S_s)-\upsigma(X^h_s,S_s)|^2+|\upsigma(X^h_s,S_s)-\upsigma(X^h_{s_h},S_s)|^2\nonumber\\
    &\quad+|\upsigma(X^h_{s_h},S_s)-\upsigma(X^h_{s_h},S^h_s)|^2 +\! |\upsigma(X^h_{s_h},S^h_s)\!-\!\upsigma(X^h_{s_h},S^h_{s_h})|^2\Big\}\D s\bigg]\;\nonumber\\
&\leq\tilde{C}\Exp\bigg[\int_0^{t\wedge \theta^h}\Big\{ C_L\big(|Z_s|^2+|X^h_s-X^h_{s_h}|^2\big) + 2M_2\big(\mathbf 1_{\{S_s\neq S^h_s\}}+\mathbf 1_{\{S^h_s\neq S_{s_h}^h\}}\big)\Big\}\D s\bigg]\nonumber\\
&\qquad+\eta_{1,h} 
\end{align}
where \(C_{_{BDG}}\) is the BDG constant, \(\tilde{C}=8\max\{t,C_{_{BDG}}\}\) and  \[\eta_{1,h}=2\Exp\left[\sup_{0\le u\le t\wedge \theta^h}\Big|\int_0^{u} (b(X^h_{s_h},S^h_{s_h},U_s)\!-\! b(X^h_{s_h},S_{s_h}^h,U^h_{s}))\D s\Big|^2\right] \] Since \(S_s=S_s^h\) for \(s\le \theta^h\), the term \(\mathbf 1_{\{S_s\neq S^h_s\}}=0\) in \cref{s-3.5}.

\medskip
We next bound other terms of \cref{s-3.5} separately.
  \medskip
\noindent
  
\textbf{(i) Time discretization error.}
From the Euler–Maruyama scheme \eqref{s-3.1} and \hyperlink{B1}{(B1)}, we get
  \begin{align}\label{s-3.6}
    \Exp|X^h_s-X^h_{s_h}|^2&\leq  2h\Exp\int_{s_h}^s |b(X^h_{r_h}, S^h_{r_h},U^h_{r_h}))|^2\D r  + 2\Exp\int_{s_h}^s \|\upsigma(X^h_{r_h}, S^h_{r_h})\|^2\D r\nonumber\\
    &\leq 2 M_2h
  \end{align}
  
  \medskip
\noindent
\textbf{(ii) Numerical switching error.}
   For $t>0$, set $K=\lfloor\frac{t}{h}\rfloor$, $t_k=kh$ for $k\leq K$ and $t_{K+1}=t$.
 Then, according to \eqref{s-3.2b} and from boundedness of the jump-rates,
 
  \begin{equation}\label{s-3.7}
  \int_0^{t\wedge \theta^h}\Exp[\mathbf 1_{\{S^h_s\neq S_{s_h}^h\}}]\D s \le\sum_{k=0}^K\int_{t_k}^{t_{k+1}} \mathbb{P}(S^h_s\neq S^h_{t_k})\D s\leq Mh t +o(h).
  \end{equation}
  
  \medskip
\noindent
\textbf{(iii) True–numerical mismatch.}
 Substituting  \eqref{s-3.6}, \eqref{s-3.7} into \eqref{s-3.5}, we obtain
  \begin{align*}
  \Exp\big[\sup_{0\leq u\leq t\wedge \theta^h}|Z_u|^2\big]\leq \hat C(t)h+\eta_{1,h}+\tilde{C}C_L\int_0^t\Exp\big[\sup_{0\leq r\leq s\wedge \theta^h}|Z_r|^2\big] \D s
  \end{align*}
  where \(\hat C(t)=2\tilde{C}M_2t(1+M)+o(h)\). By the Grönwall’s inequality, we obtain that
  \begin{equation}\label{EMdecomp2}
   \Exp\left[\sup_{0\leq u\leq t\wedge \theta^h}|Z_u|^2\right]\leq (\hat C(t)h+\eta_{1,h}) e^{C_L\tilde{C}t}
  \end{equation}
Since \cref{EMdecomp2} holds for any \(t\in(0,T]\), it holds for \(t=T\).

Now we estimate the first term of \cref{EMdecompo}. To this end, we first estimate 
$\mathbb{P}\{\theta^h \leq T\}$. From \cref{E1.1,s-3.2}, we have
\begin{equation}
S_t - S_t^h
= \int_0^t \int_{\RR_+}
\bigl[
h(X_{s}, S_{s^-}, z)
- h(X^h_{s_h}, S^h_{s^-}, z)
\bigr] \, \cP(ds,dz).
\end{equation}
Note that $S_t^h=S_t$ for all $t<\theta^h$, using \cref{t-3.1} 
and \hyperlink{B1}{(B1)} in the first two inequalities below to compute
\begin{align}\label{mismatch_bound}
\mathbb{P}\{\theta^h \leq T\}
&= \Exp\bigl[ \mathbf{1}_{\{S_{T \wedge \theta^h}\neq S^h_{T \wedge \theta^h}\}} \bigr] \nonumber \\
&= \Exp\bigg[ \int_0^{T \wedge \theta^h} \int_{\RR_+}
\mathbf{1}_{\{ h(X_s,S_{s^-},z)\neq  h(X^h_{s_h},S^h_{s^-},z)\}}
\, \cP(ds,dz)\bigg] \nonumber \\
&= \Exp\bigg[ \int_0^{T \wedge \theta^h} \int_{\RR_+}
\mathbf{1}_{\{ h(X_s,S_{s^-},z) \neq h(X^h_{s_h},S^h_{s^-},z)\}}
\, \mathbf m(dz)\, ds \bigg]\nonumber \\
&\le \Exp\bigg[ \int_0^{T \wedge \theta^h}
\sum_{l \neq S_{s^-}}
\bigl| m_{S_{s^-}l}(X_s) - m_{S_{s^-}l}(X^h_{s_h}) \bigr| ds\bigg] \nonumber \\
&\le \Exp\bigg[ \int_0^{T \wedge \theta^h}
|\mathbb{S}|C_L | X_s-X^h_{s_h}| \, ds\bigg] \nonumber \\
&\le |\mathbb{S}|C_L \Exp\bigg[ \int_0^{T \wedge \theta^h}
\bigl( |X^h_s - X_s| + |X^h_{s_h} - X^h_s| \bigr) ds\bigg]   \nonumber \\
&\le |\mathbb{S}|C_L \int_0^T
\left(
\Exp\Big[\sup_{0 \le u \le s \wedge \theta^h}
|X_u - X^h_u|^2\Big]^{1/2}
+ \Exp|X^h_s-X^h_{s_h} |
\right) ds  \nonumber \\
&\le |\mathbb{S}|C_L \int_0^T \left( (\hat C(T)h+\eta_{1,h}) e^{C_L\tilde{C}T})^{1/2} + 2M_2h \right) ds   \nonumber \\
&\le  \hat{C}_{EM}( 2h^{1/2}+(\eta_{1,h})^{1/2}),
\end{align}
where \( \hat{C}_{EM}=\mathbb{S}C_LT\max\{\hat C(T)e^{C_L\tilde{C}T\frac 12},2M_2,e^{C_L\tilde{C}T\frac 12}\}\) 
and the second last inequality follows from \cref{EMdecomp2,s-3.6}.

Then, we can use Hölder's inequality and \hyperlink{B1}{(B1)} to derive
\begin{align}\label{EMdecompo1}
\Exp\Big[
\mathbf{1}_{\{\theta^h \le T\}}
\sup_{0 \le t \le T}
| X_t-X^h_t |^2
\Big]
&\le \bigl( \mathbb{P}\{\theta^h \le T\} \bigr)^{1/p}
\left(
\Exp\bigg[ \sup_{0 \le t \le T}
| X_t-X^h_t |^{2q}\bigg]
\right)^{1/q} \nonumber \\
&\le 2M_2T \hat{C}_{EM}^{\frac{1}{p}}( 2h^{1/2}+(\eta_{1,h})^{1/2})^{\frac{1}{p}},
\end{align}
where \(p,q>1\) such that \(\frac{1}{p}+\frac{1}{q}=1\).

Finally by substituting \cref{EMdecomp2,EMdecompo1} into \cref{EMdecompo}, we obtain
\begin{align*}
    \Exp\Big[
\sup_{0 \le t \le T}
| X_t-X^h_t |^2
\Big]&\le2M_2T  \hat{C}_{EM}^{\frac{1}{p}}( 2h^{1/2}+(\eta_{1,h})^{1/2})^{\frac{1}{p}}+(\hat C(T)h+\eta_{1,h}) e^{C_L\tilde{C}T}\\
&\le C_{EM}(h^{\frac{1}{2p}}+(\eta_{1,h})^{\frac{1}{2p}})
\end{align*}
 for some constant $ {C}_{EM}>0$ depending on $\hat{C}_{EM}$. The last inequality follows from the sub-additivity of the concave function \(f(x)=x^{\frac{1}{p}},\, p>1\).  In particular for any \(\gamma\in (0,\frac{1}{2})\) take \(p=\frac{1}{2\gamma}\) to obtain \cref{s-3.3.1}.
Since \(U^h_{(\cdot)}\to U\) weakly $a.s$ in $\Omega$, arguing as in the proof of \cite{pradhan2025nearoptimalitydiscretetimeapproximations}*{Theorem 3.1}  implies \(\eta_{1,h}\to 0\) as \(h\to 0\) which  yields the desired conclusion.
\end{proof}
\begin{corollary}[Rate under aligned piecewise-constant controls]\label{corollary-em}
Under the assumptions of \cref{t-3.3}, if the same piecewise-constant control is used for both the true system and the Euler-Maruyama scheme, then for any \(\gamma\in (0,\frac{1}{2})\),
\begin{equation}\label{s-3.3}
\Exp\left[\sup_{0\leq t\leq T}|X_t-X^h_t|^2\right] \leq C_{_{EM}}h^{\gamma}
\end{equation} for some constant $C_{_{EM}}>0$ depending on $T$ and independent of $h$.
\end{corollary}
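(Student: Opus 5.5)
The corollary is a special case of \cref{t-3.3}. The plan is to rerun that proof with $U=U^h$ and check that the only $h$-independent-in-principle error term, $\eta_{1,h}$, vanishes identically.

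\textbf{Step 1: the control error $\eta_{1,h}$ is zero.} Take $U$ piecewise constant on the grid $\{kh\}$, so that $U_s=U^h_s=U^h_{s_h}$ for every $s$. Then the integrand defining $\eta_{1,h}$ in \cref{s-3.5} is identically zero, namely
\[
b(X^h_{s_h},S^h_{s_h},U_s)-b(X^h_{s_h},S^h_{s_h},U^h_s)=0 .
\]
The companion term $b(\cdot,\cdot,U^h_s)-b(\cdot,\cdot,U^h_{s_h})$ also vanishes. Hence $\eta_{1,h}=0$.

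\textbf{Step 2: estimate on the coupled event.} With $\eta_{1,h}=0$, the Grönwall estimate \cref{EMdecomp2} reduces to
\[
\Exp\Big[\sup_{0\le u\le T\wedge\theta^h}|Z_u|^2\Big]\le \hat C(T)\,h\,e^{C_L\tilde C T}.
\]
The contributions used to reach this bound are unchanged from the theorem: the time-discretization error \cref{s-3.6}, of order $2M_2h$, and the numerical switching error \cref{s-3.7}, of order $Mht+o(h)$. Neither involves the control.

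\textbf{Step 3: mismatch probability.} The bound \cref{mismatch_bound} becomes $\mathbb P(\theta^h\le T)\le 2\hat C_{EM}h^{1/2}$. This step uses that the switching rates do not depend on the action and that $\sup_t\Exp|Z_t|^2$ is $O(h)$ before the mismatch time.

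\textbf{Step 4: estimate on the mismatch event.} Apply Hölder as in \cref{EMdecompo1}. The factor $\big(\Exp\sup_t|Z_t|^{2q}\big)^{1/q}$ is bounded uniformly in $h$, since $b$ and $\upsigma$ are bounded under \hyperlink{B1}{(B1)} and BDG gives moment bounds for both $X$ and $X^h$. This yields a contribution of order $h^{1/(2p)}$.

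\textbf{Step 5: combine.} Adding the two pieces in \cref{EMdecompo} gives
\[
\Exp\Big[\sup_{0\le t\le T}|X_t-X^h_t|^2\Big]\le C\big(h+h^{1/(2p)}\big)\le C_{EM}\,h^{1/(2p)}\qquad\text{for } h\le 1 .
\]
Choosing $p=1/(2\gamma)>1$ produces the rate $h^\gamma$ for any $\gamma\in(0,\tfrac12)$. The resulting constant depends only on $T$, $C_L$, $M$, $M_2$, $|\mathbb S|$ and $\gamma$, not on $h$.

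\textbf{Main obstacle.} The delicate point is Step 3: the mismatch probability must be controlled by the diffusion error only up to $\theta^h$, which avoids any circularity. Once $\eta_{1,h}=0$, this is exactly the argument already carried out in \cref{t-3.3}, so no new estimate is needed.
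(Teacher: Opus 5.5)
Your proposal is correct and is essentially the paper's own argument: the corollary is just the final estimate $C_{EM}\bigl(h^{\gamma}+\eta_{1,h}^{\gamma}\bigr)$ of \cref{t-3.3} with $\eta_{1,h}\equiv 0$, since $U$ and $U^h$ coincide as grid-piecewise-constant processes, and $p=1/(2\gamma)$. Your remark that the H\"older factor $\bigl(\Exp\sup_{t\le T}|Z_t|^{2q}\bigr)^{1/q}$ is bounded by a constant depending on $q$, and hence on $\gamma$, is more accurate than the paper's bound $2M_2T$ for that factor.
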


\subsubsection{\bf Convergence for the finite horizon cost criterion}\hspace{30em}
\vspace{.5em}

We now analyze the Euler–Maruyama time discretization of the controlled
switching diffusion and establish convergence of the corresponding value
functions.
Under the discrete-time setup, the associated discrete-time cost evaluation criteria are given by:\\ \\
\textbf{Discrete-time Finite Horizon Cost:}

Fix a time step $h>0$ and define the grid $t_k = kh$ with
$K=\lfloor T/h \rfloor$ and $t_K \le T < t_{K+1}$. Define the stage-wise cost function \(c_h:\Rd\times\mathbb{S}\times\Act\to\RR_{+}\) such that for every \((x,i,\zeta)\in \Rd\times\mathbb{S}\times\Act\)
\begin{equation}\label{discrete-cost}
c_h(x,i,\zeta)\df h\times c(x,i,\zeta).
\end{equation}
where $c$ is the cost function of the switching diffusion model.
For a control sequence $U^h=(U_{t_k}^h)_{k\ge0}$, the discrete-time
finite-horizon cost is defined by
\begin{equation}\label{Ediscrete_cost}
\cJ_{T,h}^{U^h}(x,i)
=
\mathbb{E}_{x,i}^{U^h}
\!\left[
\sum_{k=0}^{K-1}
c_h(X_{t_k}^h,S_{t_k}^h,U_{t_k}^h)
+
c_T(X_{t_K}^h,S_{t_K}^h)
\right].
\end{equation}

An \emph{admissible policy} is a sequence
$\{U_{t_k}^h\}_{k\ge0}$ such that each $U_{t_k}^h$ is measurable with
respect to the information set
\[
I_k^h
=
\{X_{[0,t_k]}^h,\, S_{[0,t_k]}^h,\, U_{[0,t_{k-1}]}^h\},
\qquad
I_0^h=\{X_0^h,S_0^h\},
\]
that is,
\begin{equation}\label{Econtrol}
U_{t_k}^h = v_k^h(I_k^h),
\end{equation}
for some measurable map $v_k^h$ taking values in $\pV$.
The collection of all such controls is denoted by $\Uadm^h$.

We further introduce the following subclasses:

\begin{itemize}
\item
$\Um^h$ (Markov policies):
\[
U_{t_k}^h = v_k^h(t_k,X_{t_k}^h,S_{t_k}^h)
\quad\text{for measurable } v_k^h:[0,\infty)\times\Rd\times\mathbb{S}\to \pV.
\]

\item
$\Usm^h$ (stationary Markov policies):
\[
U_{t_k}^h = v^h(X_{t_k}^h,S_{t_k}^h)
\quad\text{for a time-independent measurable map } v^h:\Rd\times\mathbb{S}\to \pV. 
\]
\end{itemize}
The discrete-time control problem is to find a control sequence $U^{h, *}$ that minimizes $\cJ_{T,h}^{U^h}(x)$: that is 
\begin{equation} \label{Ediscrete_optimal_control}
V_T^h(x) \df \inf_{U^h \in \Uadm^h} \cJ_{T,h}^{U^h}(x) = \cJ_{T,h}^{U^{h, *}}(x).
\end{equation}
Here $V_T^h(x)$ is the discrete-time value function.

Let us denote
\begin{equation}\label{kernel}
P_h(dy,j\mid x,i,\zeta)=\mathbb{P}_{x,i}((X^h_{t_{k+1}},S^h_{t_{k+1}})\in dy\times\{j\}\,|\,X^h_{t_k}=x,S^h_{t_k}=i,U^h_{t_k}=\zeta)
\end{equation}
 be the transition kernel of the Markov chain \cref{s-3.1,s-3.2}.

\textbf{Note:} For any \(f\in \cC_b(\Rd\times\mathbb{S})\), we will adopt the following convention
\[
\int_{\Rd\times\mathbb{S}} f(y,j)\,P_h(dy,dj \mid x,i,\zeta)
=
\sum_{j\in\mathbb{S}} \int_{\Rd} f(y,j)\,P_h(dy,j \mid x,i,\zeta).
\]

From \cite{hernandez2012discrete}*{Theorem 3.2.1} we have the following verification theorem for the finite horizon cost in our setup.

\begin{theorem}[Verification Theorem for Finite Horizon Cost]\label{thm:verification_finite} Suppose the assumptions \hyperlink{B1}{(B1)}-\hyperlink{B2}{(B2)} hold for a Markov decision process with the finite horizon cost defined in \cref{Ediscrete_cost}.
Let $\{V_{t_k}(x,i)\}_{k=0}^{K}$ satisfies the dynamic programming equations
\begin{align}
V_{t_k}(x,i)
&=
\min_{\zeta\in \Act}
\Bigg[
c_h(x,i,\zeta)
+
\int_{\Rd\times\mathbb{S}}V_{t_{k+1}}(y,j)
P_h(dy,dj|(x,i),\zeta)\,
\Bigg],\\
V_K(x,i) &= c_{T}(x,i),
\end{align}
for $k=K-1,\dots,0$.

Then the following statements hold.
\begin{enumerate}
\item The function $V_0(x,i)$ coincides with the optimal value function, i.e.,
\[
V_0(x,i)=\inf_{U^h \in \Uadm^h} \cJ_{T,h}^{U^h}(x,i) = \cJ_{T,h}^{U^{h, *}}(x,i)
\]
\item A Markov policy $v_k$ is optimal if and only if it attains the minimum in the dynamic programming at every state and time, i.e.,
\begin{align*}
c_h(x,i,v_k(k,x,i))&+
\int_{\Rd\times\mathbb{S}}V_{t_{k+1}}(y,j)P_h(dy,dj|(x,i),v_k(k,x,i))\\
&=\min_{\zeta\in \Act}
\Big[
c_h(x,i,\zeta)+
\int_{\Rd\times\mathbb{S}}V_{t_{k+1}}(y,j)P_h(dy,dj|(x,i),\zeta)
\Big],
\end{align*}
\end{enumerate}
\end{theorem}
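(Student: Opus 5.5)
The plan is a backward induction on the dynamic programming recursion, exactly as in the classical finite-horizon verification argument of \cite{hernandez2012discrete}*{Theorem 3.2.1}. The only inputs specific to our setting are that the backward recursion is well posed and that minimizers exist and can be chosen measurably. Two facts will be checked first. First, every $V_{t_k}$ is bounded and continuous. Boundedness follows from (B2): by induction $\|V_{t_k}\|_\infty\le (K-k)hM_3+M_3\le (T+1)M_3$, since $P_h$ is a probability kernel. Second, the map $(x,i,\zeta)\mapsto \int V_{t_{k+1}}(y,j)P_h(dy,dj\mid (x,i),\zeta)$ is continuous. For this I use the explicit one-step structure of \cref{s-3.1}--\cref{s-3.2}. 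Given $(X^h_{t_k},S^h_{t_k},U^h_{t_k})=(x,i,\zeta)$, we have $X^h_{t_{k+1}}=x+b(x,i,\zeta)h+\upsigma(x,i)(W_{t_{k+1}}-W_{t_k})$. The regime $S^h_{t_{k+1}}$ evolves on $[t_k,t_{k+1})$ as a pure jump chain with frozen rates $m_{\cdot\cdot}(x)$, and it is independent of the Brownian increment because $\cP$ and $W$ are independent. Hence
\[
\int f\,dP_h(\cdot\mid (x,i),\zeta)=\sum_{j}\big(e^{hQ(x)}\big)_{ij}\,\Exp\big[f(x+b(x,i,\zeta)h+\upsigma(x,i)\sqrt h\,\xi,\,j)\big],\quad \xi\sim N(0,I_d).
\]
By continuity of $b,\upsigma,m_{ij}$ and dominated convergence, this is jointly continuous in $(x,\zeta)$ for every bounded continuous $f$. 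In other words, $P_h$ is weakly continuous.

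With this in hand I proceed in three steps. (a) I show by backward induction that each $V_{t_k}\in\cC_b(\Rd\times\mathbb{S})$. The bracket in the recursion is jointly continuous in $(x,\zeta)$ because $c$ is continuous by (B2) and $P_h$ is weakly continuous. Since $\Act$ is compact, the minimum over $\zeta$ is attained and is continuous in $x$ by Berge's maximum theorem. Berge's theorem, or the measurable selection theorem, then supplies a Borel measurable selector $v_k(x,i)$ attaining the minimum. (b) For an arbitrary admissible $U^h\in\Uadm^h$ I show $\cJ_{T,h}^{U^h}(x,i)\ge V_0(x,i)$. Set $M_k:=\sum_{l<k}c_h(X^h_{t_l},S^h_{t_l},U^h_{t_l})+V_{t_k}(X^h_{t_k},S^h_{t_k})$. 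The controlled chain is Markov given the current state and action, and $U^h_{t_k}$ is $I^h_k$-measurable. Conditioning on $I^h_k$ therefore gives $\Exp[V_{t_{k+1}}(X^h_{t_{k+1}},S^h_{t_{k+1}})\mid I^h_k]=\int V_{t_{k+1}}\,dP_h(\cdot\mid X^h_{t_k},S^h_{t_k},U^h_{t_k})$, where a relaxed action is integrated against $\pV$. The DP equation then says $\{M_k\}$ is a submartingale, so $\Exp M_K\ge M_0=V_0(x,i)$. Since $M_K$ is exactly the cost inside \cref{Ediscrete_cost}, this is the desired inequality. (c) Along the Markov policy $v_k$ from (a), every inequality in (b) becomes an equality, so $\{M_k\}$ is a martingale and $\cJ_{T,h}^{v}=V_0$. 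This proves (1).

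For (2), sufficiency is step (c). For necessity, suppose a Markov policy $v$ is optimal but fails to attain the minimum at some stage $k$. Write the cost of $v$ as $V_0$ plus the sum of the nonnegative expected DP "defects" along the trajectory. Optimality forces each defect to vanish almost surely under the law of $(X^h_{t_k},S^h_{t_k})$, and this law has a positive density on $\Rd$ for $k\ge1$ thanks to the Gaussian increment and (A3). So "at every state" must be read as holding a.e. for $k\ge1$, and at the initial state for $k=0$. I expect this to be the only delicate point: the literal "every state" claim needs that almost-everywhere qualification, or the usual convention of the cited theorem.

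The main obstacle is step (a), namely the weak continuity of $P_h$, since it is what justifies writing "min" rather than "inf" and what yields a measurable optimal selector. I would obtain it from the explicit Gaussian-times-$e^{hQ(x)}$ representation above, which uses both that the switching rates are action-independent and that $\cP$ and $W$ are independent.
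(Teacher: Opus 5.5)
Your proposal is correct and follows the same route as the paper, which gives no argument of its own beyond invoking the cited finite-horizon dynamic programming theorem (Theorem 3.2.1 of Hern\'andez-Lerma--Lasserre). You reproduce that standard backward-induction/submartingale proof and check its hypotheses here: boundedness, weak continuity of $P_h$ via the explicit Gaussian-times-$e^{hQ(x)}$ form of the one-step law, and a measurable minimizing selector. Your almost-everywhere caveat on the ``only if'' part of (2) at stages $k\ge1$ is a correct sharpening of the statement. Note also that step (b), like the paper's appeal to the cited theorem, tacitly uses the MDP convention that a $\pV$-valued action acts by mixture, $P_h(\cdot\mid x,i,\mathrm{v})=\int_{\Act}P_h(\cdot\mid x,i,\zeta)\,\mathrm{v}(\D\zeta)$, and not through the averaged drift $b(x,i,\mathrm{v})$ in \eqref{s-3.1}; with the averaged drift the one-step kernel is not affine in $\mathrm{v}$, and the minimum over $\Act$ could exceed the infimum over $\pV$.
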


\begin{remark}\label{bdd&cts}
The value functions $\{V_{t_k}\}_{k=0}^{K-1}$ are bounded and continuous on $\Rd\times\mathbb{S}$, which follows by backward induction from the verification theorem and the weak continuity of the transition kernel.
\end{remark}

\begin{theorem}[Convergence of discrete-time value functions]\label{Tconvevaluefunc}
Suppose Assumptions \hyperlink{B1}{(B1)}-\hyperlink{B2}{(B2)} hold. Let $V_T^h(x,i)$ be the value function in the discrete-time model (corresponding to the piecewise constant policy $v^{*\epsilon}$ (see \cref{nearop-finite horizon})). Then, for all $(x,i)\in \Rd\times\mathbb{S}$ and \(\gamma\in (0,\frac{1}{2})\) we have, there exists a positive constant $\hat{C}_5$ such that
\begin{equation*}
\abs{V_T^h(x,i) - V_T(x,i)} \leq  \hat{C}_5h^{\frac \gamma 2}\,.
\end{equation*}
\end{theorem}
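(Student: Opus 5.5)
The plan is to compare the continuous-time and discrete-time costs policy by policy, for piecewise-constant controls whose switching grid is aligned with the Euler--Maruyama grid. The uniform rate will come from \cref{corollary-em}; the passage to value functions will come from a two-sided inf argument using \cref{nearop-finite horizon}.

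\textbf{Step 1: a pathwise cost estimate.} Fix a piecewise-constant policy $U^h$ that is constant on each $[t_k,t_{k+1})$. Run both the true system \cref{E1.1} and the scheme \cref{s-3.1}--\cref{s-3.2} under $U^h$, driven by the same $W$ and $\cP$. Write
\[
\cJ_T^{U^h}(x,i)-\cJ_{T,h}^{U^h}(x,i)=\Exp\Big[\sum_{k=0}^{K-1}\int_{t_k}^{t_{k+1}}\big(c(X_s,S_s,U^h_{t_k})-c(X^h_{t_k},S^h_{t_k},U^h_{t_k})\big)\D s\Big]+R_1+R_2,
\]
where $R_1=\Exp\int_{t_K}^T c\,\D s=\order(h)$ and $R_2=\Exp[c_T(X_T,S_T)-c_T(X^h_{t_K},S^h_{t_K})]$.

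Split the integrand in the first term, and the terminal term $R_2$, into three pieces.
\begin{itemize}
\item The state part, $|c(X_s,S_s,\cdot)-c(X^h_s,S_s,\cdot)|\le C_{L_c}|X_s-X^h_s|$ by \hyperlink{B2}{(B2)}.
\item The time-discretization part, controlled by $C_{L_c}|X^h_s-X^h_{t_k}|$ together with \cref{s-3.6}, giving $\order(h^{1/2})$.
\item The regime part, bounded by $2M_3\big(\mathbf 1_{\{S_s\ne S^h_s\}}+\mathbf 1_{\{S^h_s\ne S^h_{t_k}\}}\big)$.
\end{itemize}
By Cauchy--Schwarz and \cref{corollary-em}, the state part gives $\Exp\sup_t|X_t-X^h_t|\le (C_{EM}h^\gamma)^{1/2}=C h^{\gamma/2}$; this is the source of the exponent $\gamma/2$. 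The switching indicator $\mathbf 1_{\{S^h_s\ne S^h_{t_k}\}}$ is handled by \cref{s-3.7} and is $\order(h)$.

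The mismatch term is bounded by $\mathbb P(\theta^h\le T)$, using the argument of \cref{mismatch_bound} with $\eta_{1,h}=0$ because the controls are aligned. That argument gives $\int_0^T\Exp|X_s-X^h_{s_h}|\,\D s\lesssim h^{\gamma/2}$, hence $\mathbb P(\theta^h\le T)\le C h^{\gamma/2}$. The same bounds apply to $R_2$, with the terminal cost evaluated at $T$ versus $t_K$.

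Collecting terms gives $|\cJ_T^{U^h}-\cJ_{T,h}^{U^h}|\le \hat C_5 h^{\gamma/2}$. The constant $\hat C_5$ is uniform over all grid-aligned piecewise-constant policies, since the constants in \cref{t-3.3} depend only on $T$, $M_2$ and $C_L$.

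\textbf{Step 2: from costs to value functions.} For the upper bound, let $U^{h,*}$ be optimal for the discrete model; it exists by \cref{thm:verification_finite}. Interpolate it as a piecewise-constant control on the continuous system. Step 1 then gives
\[
V_T\le \cJ_T^{U^{h,*}}\le V^h_T+\hat C_5h^{\gamma/2}.
\]
For the lower bound, take the near-optimal piecewise-constant policy $\bar v^*_\epsilon$ from \cref{nearop-finite horizon}. Assume its grid refines, or can be chosen to coincide with, the $h$-grid, as the statement implicitly does by fixing $v^{*\epsilon}$. Its discrete-time counterpart is admissible in $\Uadm^h$, so Step 1 gives
\[
V_T^h\le \cJ_{T,h}^{\bar v^*_\epsilon}\le \cJ_T^{\bar v^*_\epsilon}+\hat C_5h^{\gamma/2}\le V_T+\epsilon+\hat C_5h^{\gamma/2}.
\]
Here $V_T$ is understood as the value over the comparison class in the statement. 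Letting $\epsilon\downarrow0$, or taking $\epsilon\le h^{\gamma/2}$ and enlarging the constant, gives the claim.

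\textbf{Main obstacle.} The hardest point is the regime-mismatch contribution. A crude Hölder bound as in \cref{EMdecompo1} would lose an extra power in the exponent. To avoid this, I will estimate the cost difference directly in $L^1$: I bound $\Exp\,\mathbf 1_{\{S_s\ne S^h_s\}}\le\mathbb P(\theta^h\le s)$ linearly, through \cref{mismatch_bound}, rather than passing through the $L^2$ estimate of $X$. A further care point is keeping all constants independent of the particular policy, which is what makes the inf argument of Step 2 legitimate.
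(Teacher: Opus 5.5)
Your Step~1 is sound when a single control process, constant on each $[t_k,t_{k+1})$, drives both the true system and the scheme. It is essentially \eqref{ETconvevaluefunc1A} and \cref{cor:CNearFinite}, and you handle the terminal term at $T$ versus $t_K$ more carefully than the paper. The half $V_T\le V_T^h+\hat C_5h^{\gamma/2}$ is also correct: you feed the discrete optimal control, built from the Euler--Maruyama path, into the true system, as in \eqref{EconvevaluefuncB}.

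The gap is in the other half. The inequality $\cJ_{T,h}^{\bar v^*_\epsilon}\le\cJ_T^{\bar v^*_\epsilon}+\hat C_5h^{\gamma/2}$ does not follow from Step~1. The policy $\bar v^*_\epsilon\in\Um$ from \cref{nearop-finite horizon} is a Markov feedback $\bar v^*_\epsilon(t,x,i)$. The cost known to be at most $V_T+\epsilon$ is the one under $U_t=\bar v^*_\epsilon(t,X_t,S_t)$. Even if $\bar v^*_\epsilon$ is piecewise constant in $t$ on the $h$-grid, this process is not constant on grid intervals. It is evaluated at the current state $X_t$, and $\bar v^*_\epsilon$ is in general discontinuous in $x$. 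Its discrete counterpart uses $\bar v^*_\epsilon(t_k,X^h_{t_k},S^h_{t_k})$. So the two systems are driven by different controls, \cref{corollary-em} does not apply, and aligning the time grids does not help.

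What is missing is an estimate of the cost of replacing a feedback policy by its sample-and-hold version, which is the real content of the theorem. The paper gets it as follows:
\begin{enumerate}
\item It starts from a \emph{Lipschitz} $\epsilon$-optimal Markov policy $v^{*\epsilon}$.
\item It sets $v^{*\epsilon h}(s,\cdot)=v^{*\epsilon}(kh,X^h_{kh},S^h_{kh})$ on $[kh,(k+1)h)$, as in \eqref{p.w-policy}.
\item It bounds the extra action-mismatch term $\eta_{3,h}$ using the Lipschitz continuity of $c$ in $\zeta$ assumed in (B2) together with the Lipschitz continuity of $v^{*\epsilon}$.
\end{enumerate}
For a discontinuous piecewise-constant policy no such bound is available, let alone a rate.

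Your fallback of reading $V_T$ as the value over the grid-aligned class changes the statement. That quantity depends on $h$ and is not $\cJ_T^*$, and with that reading the claim is little more than a restatement of Step~1.

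Finally, your handling of $\epsilon$ does not close the argument. Letting $\epsilon\downarrow0$ is incompatible with keeping the time mesh of $\bar v^*_\epsilon$ aligned with a fixed $h$. Choosing $\epsilon\le h^{\gamma/2}$ would require a quantitative relation between $\epsilon$ and that mesh, which the density results do not supply.
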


\begin{proof}
We first compare the discrete sum with the continuous-time integral.
For any admissible $U^h\in \Uadm^h$, let \((X_{t_k}^h,S_{t_k}^h)\) be the discrete-time controlled process obtained from \cref{s-3.1}-\cref{s-3.2}. 
\begin{align*}
& \Bigg | \mathbb{E}_{x,i}^{U^h} \left[ \sum_{k=0}^{ K -1 } c(X_{t_k}^h, S_{t_k}^h, U_{t_k}^h)h + c_T(X^h_{t_K}, S_{t_K}^h))\right]    - \mathbb{E}_{x,i}^{U^h} \left[ \int_{0}^{T} c(X^h_s, S^h_s, U^h_s) ds + c_T(X^h_T, S^h_T)\right]\Bigg | \nonumber\\ 
&\quad\leq  \Bigg |\mathbb{E}_{x,i}^{U^h} \left[\sum_{k=0}^{ K-1  } \int_{t_k}^{t_{k+1}} \left(c(X_{t_k}^h, S_{t_k}^h, U_{t_k}^h) - c(X^h_s, S^h_s, U^h_s)\right) ds \right]\Bigg | + \Bigg |\mathbb{E}_{x,i}^{U^h} \left[ \int_{t_K}^{T} c(X^h_s,S^h_s, U^h_s) ds \right]\Bigg |\nonumber\\
&\quad=\Bigg |\mathbb{E}_{x,i}^{U^h} \left[ \int_{t_K}^{T} c(X^h_s,S^h_s, U^h_s) ds \right]\Bigg |\nonumber\\
&\quad\leq  \|c\|_{\infty}h
\end{align*}

Thus, it follows that
\begin{equation*}
\mathcal{J}_{T,h}^{U^h}(x,i) = \mathbb{E}_{x,i}^{U^h} \left[ \int_{0}^{T}  c(X^h_s,S^h_s,U^h_s)\D s + c_T(X^h_T,S^h_T)\right] + \order(h).
\end{equation*}
Let $v^{*\epsilon}$ is a Lipschitz continuous $\epsilon$-optimal control for the continuous time problem (existence is guaranteed by \cref{nearop-finite horizon}) then, we have 
\begin{align}\label{bound22}
\cJ_T^{v^{*\epsilon}}(x,i) \leq V_T(x,i) + \epsilon.
\end{align}
Now we define a piecewise constant control
\begin{equation}\label{p.w-policy}
    v^{*\epsilon h}(s,X_s^h,S_s^h)\df 
v^{*\epsilon}(kh,X_{kh}^h,S_{kh}^h)\qquad \text{for}\, s\in[kh,(k+1)h)
\end{equation}
 Consider the discrete-time model $(X^h,S^h)$ associated with the piecewise constant control $v^{*\epsilon h}$ (as in \cref{s-3.1}-\cref{s-3.2}); with optimal value $V_T^h(x,i)$. Then, it follows that 
 \begin{align}\label{bound23}
 V_T^h(x,i) \leq \cJ_{T,h}^{v^{*\epsilon h}}(x,i).
 \end{align}
Since $c\; \text{and}\; c_T$ are Lipschitz continuous, for some constant $C_5 > 0$ we get
\begin{align*}
\abs{\cJ_T^{v^{*\epsilon}}(x,i) - \cJ_{T,h}^{v^{*\epsilon h}}(x,i)} \leq & \abs{\Exp_{x,i} \left[ \int_0^T c(X_t, S_t, v^{*\epsilon}_t) dt + c_T(X_T, S_T) \right] \nonumber\\
&- \Exp_{x,i} \left[ \int_0^T c(X^h_t, S^h_t, v^{*\epsilon h}_t) dt + c_T(X^h_T, S^h_T) \right]} + \abs{\order(h)}\nonumber\\
\leq & \Exp_{x,i} \left[ \int_0^T \abs{(c(X_t, S_t, v^{*\epsilon}_t)-c(X^h_t, S^h_t, v^{*\epsilon h}_t))} dt  \right]+ \abs{\order(h)} 
\nonumber \\
\leq & \Exp_{x,i} \Big[ \int_0^T \big\{\abs{(c(X_t, S_t, v^{*\epsilon}_t)-c(X^h_t, S_t, v^{*\epsilon }_t))}\\
&\,+\abs{(c(X^h_t, S_t, v^{*\epsilon}_t)-c(X^h_t, S^h_t, v^{*\epsilon}_t))}(\mathbf{1}_{\{\theta^h\le t\}}+\mathbf{1}_{\{\theta^h> t\}})\\
&\,+\abs{(c(X^h_t, S^h_t, v^{*\epsilon}_t)-c(X^h_t, S^h_t, v^{*\epsilon h}_t))}\big\} dt  \Big]+ \abs{\order(h)} \nonumber \\
\leq & C_5\Exp_{x,i}\int_0^T\abs{X_t - X^h_t}\D t + 4M_3Ch^{\frac 12}+\eta_{3,h}+ \|c\|_{\infty} h
\nonumber \\
\leq & C_5T\sqrt{\Exp_{x,i}[\sup_{t\leq T}\abs{X_t - X^h_t}^2]}+ 4M_3Ch^{\frac 12} +C_{L_c}C_2(\epsilon)hT+ \|c\|_{\infty} h
\end{align*}
where, 
\begin{align*}
    \eta_{3,h}&=\Exp_{x,i} \int_0^T\abs{(c(X^h_t, S^h_t, v^{*\epsilon}_t)-c(X^h_t, S^h_t, v^{*\epsilon h}_t))}dt\\
&= \Exp_{x,i} \sum_{k=0}^K\int_{t_k}^{t_{k+1}}\abs{(c(X^h_t, S^h_t, v^{*\epsilon}_t)-c(X^h_t, S^h_t, v^{*\epsilon h}_t))}dt\\
&\le C_{L_c}C_2(\epsilon)hT\qquad(\text{from}\, \hyperlink{B2}{(B2)} \,\text{and}\,\cref{p.w-policy}  )
\end{align*}
where the fourth inequality follows from the \cref{mismatch_bound} and \hyperlink{B2}{(B2)} (since in this case \(\eta_{1,h}, \eta_{2,h}\) are zero). By \cref{corollary-em},
$\mathbb{E}[\sup_{t\leq T}\abs{X_t - X^h_t}^2] \leq C_{_{EM}} h^{\gamma}$ for \(\gamma\in(0,\frac{1}{2})\). Hence
\begin{align}\label{ETconvevaluefunc1A}
\abs{\cJ_T^{v^{*\epsilon}}(x,i) - \cJ_{T,h}^{v^{*\epsilon h}}(x,i)} \leq (C_5C_{_{EM}}^{\frac 1 2} +4M_3C+ C_{L_c}C_2(\epsilon)T+\|c\|_{\infty})h^{\frac \gamma 2} = \hat{C}_5h^{\frac \gamma 2}, 
\end{align}
where the constant $\hat{C}_5 := (C_5C_{_{EM}}^{\frac 1 2} +4M_3 C+C_{L_c}C_2(\epsilon)T+\|c\|_{\infty})$ depends upon the piecewise constant policy $v^{*\epsilon h}$\,.
Thus, from \eqref{bound22}, \eqref{bound23} and \eqref{ETconvevaluefunc1A} we get
\begin{equation}\label{EconvevaluefuncA}
V_T^h(x,i) \leq \cJ_{T,h}^{v^{*\epsilon}}(x,i) \leq \cJ_{T}^{v^{*\epsilon}}(x,i) + \hat{C}_5h^{\frac \gamma 2} \leq V_T(x,i) + \epsilon +\hat{C}_5h^{\frac \gamma 2}\,.
\end{equation}
For the lower bound, let $U^{h,*}\in \Uadm^h$ be an optimal control of the discretized system\,. Then, in view of (\ref{ETconvevaluefunc1A}), it follows that
\begin{equation}\label{EconvevaluefuncB}
V_T^h(x,i) = \cJ_{T,h}^{U^{h, *}}(x,i) \geq \cJ_{T}^{U^{h,*}}(x,i) - \hat{C}_5h^{\frac \gamma 2} \geq  V_T(x,i) - \hat{C}_5h^{\frac \gamma 2}\,.
\end{equation} 
Since $\epsilon$ is arbitrary, from (\ref{EconvevaluefuncA}) and (\ref{EconvevaluefuncB}) we obtain the desired result\,. This completes the proof\,.
\end{proof}
\begin{remark}
The constant $\hat{C}_5 = \hat{C}_5(\epsilon)$ depends on $\epsilon$. 
However, as the discretization step $h$ is uniform, this dependence does not influence the convergence result.
\end{remark}

\begin{corollary}\label{cor:CNearFinite}
From the proof of the above theorem (as we have obtained the estimate \cref{ETconvevaluefunc1A}), it follows that for piecewise-constant policies (aligned with the Euler-Maruyama grid) there is a uniform error estimate for \(\gamma\in (0,\frac{1}{2})\),
\[
\sup_{v^{h}\ \text{piecewise-const on grid }h}\big|\cJ_T^{v^{h}}(x,i) - \cJ_{T,h}^{v^{h}}(x,i)\big| \le \tilde{C}\,h^{\frac{^{\gamma}}{2}},
\]
for some constant $\tilde{C}$ that depends only on the model parameters and $T$.
\end{corollary}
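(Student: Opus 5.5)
The plan is to revisit the chain of estimates that led to \cref{ETconvevaluefunc1A} and check that, for a policy $v^h$ that is already piecewise constant on the grid of mesh $h$, every constant in that chain depends only on the model data $(C_L, M_2, M, C_{L_c}, M_3, |\mathbb{S}|, T)$ and not on the particular policy. For such a $v^h$ we compare the true process $(X,S)$ driven by $v^h$ with the Euler--Maruyama process $(X^h,S^h)$ driven by the same $v^h$. Both use the same Brownian motion and the same Poisson random measure, and the control values agree on each interval $[kh,(k+1)h)$.

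First, the error term $\eta_{1,h}$ in \cref{s-3.5} vanishes identically, since $U_s=U^h_s=U^h_{s_h}$. Thus the bound \cref{EMdecomp2} reduces to $\hat C(t)h\,e^{C_L\tilde C t}$, where $\hat C(t)=2\tilde C M_2 t(1+M)+o(h)$ involves only model constants. The $o(h)$ term comes from \cref{s-3.7}; I would replace it by the explicit bound $\mathbb{P}(S^h_s\neq S^h_{t_k})\le 1-e^{-|\mathbb{S}|M(s-t_k)}\le |\mathbb{S}|M h$, which holds because the rates are bounded uniformly, independently of the control, and indeed independently of the action under (B1). Next, the mismatch estimate \cref{mismatch_bound} uses only the Lipschitz constant of the action-independent rates $m_{ij}$ together with \cref{EMdecomp2} and \cref{s-3.6}, so $\mathbb{P}(\theta^h\le T)\le \hat C_{EM}\,2h^{1/2}$ with $\hat C_{EM}$ independent of $v^h$.

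For the H\"older step \cref{EMdecompo1}, I would bound $\Exp[\sup_{t\le T}|X_t-X_t^h|^{2q}]$ by a moment estimate in terms of $M_2$, $T$ and $q$ only, using BDG and boundedness of $b,\upsigma$. This gives \cref{s-3.3} with a uniform constant, i.e.\ \cref{corollary-em} holds uniformly over grid-aligned piecewise-constant controls.

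Then I would redo the cost comparison with $v^{*\epsilon}$ replaced by $v^h$ on both sides. The Riemann-sum error is at most $\|c\|_\infty h$ plus $\Exp\int |c(X^h_{t_k},S^h_{t_k},U)-c(X^h_s,S^h_s,U)|\,ds$, since $U$ is constant on each cell. That integral is bounded by $C_{L_c}(2M_2h)^{1/2}T + 2M_3\,|\mathbb{S}|MhT$. The term $\eta_{3,h}$, which earlier carried the policy-dependent constant $C_2(\epsilon)$, is now zero because no regridding of the policy is needed. The regime-mismatch term is bounded by $2M_3T\,\mathbb{P}(\theta^h\le T)$ on $\{\theta^h\le t\}$ and vanishes on $\{\theta^h>t\}$. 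The terminal cost is handled by Lipschitz continuity of $c_T$ plus $2M_3\mathbb{P}(\theta^h\le T)$, and the small gap between $T$ and $t_K$ is handled by the increment bound \cref{s-3.6}.

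Collecting terms gives $|\cJ_T^{v^h}-\cJ_{T,h}^{v^h}|\le C_5T(C_{EM}h^\gamma)^{1/2}+C'(h^{1/2}+h)\le\tilde C h^{\gamma/2}$ for $h\le1$, and taking the supremum over $v^h$ gives the claim. The main obstacle is making sure no hidden policy dependence survives. The two places to watch are the $o(h)$ in \cref{s-3.7} and the $2q$-th moment in \cref{EMdecompo1}; both are handled by the uniform bounds in (B1) and by the action-independence of the rates.
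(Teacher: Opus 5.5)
Your proposal follows the paper's own route. The paper obtains the corollary simply by reading off \eqref{ETconvevaluefunc1A}, and you re-run that same chain of estimates for a grid-aligned control fed to both the true system and the Euler--Maruyama scheme, noting that $\eta_{1,h}$ and the policy-dependent term $\eta_{3,h}$ (the only source of $C_2(\epsilon)$ in $\hat C_5$) vanish, so only model constants survive.

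Your version is more careful than the paper's one-line justification. You give explicit bounds in place of the $o(h)$ in \eqref{s-3.7} and the loosely justified $2q$-th moment bound in \eqref{EMdecompo1}, and you actually estimate the Riemann-sum term that the paper drops. Like the paper (cf.\ Corollary~\ref{corollary-em}), however, your argument presupposes that $v^h$ is a single control process driving both systems, not a feedback law evaluated at $X_{t_k}$ and at $X^h_{t_k}$ respectively.
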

Next, using the above continuity result of the value functions with respect to the discrete-time approximation, we establish the near optimality of the discrete-time optimal policy in the continuous-time RSDP system.
\begin{theorem}[Near optimality of discrete-time optimal policies]\label{TNearFinite}
Assume \hyperlink{B1}{(B1)}--\hyperlink{B2}{(B2)} holds.
Let $U^{*,h}\in\Um^h$ be an optimal policy for the discrete-time approximation, and let $\tilde U^{*,h}$ denote its continuous-time interpolation.

Then there exists a constant $\hat C_6>0$, independent of $h$, such that for any \(\gamma\in (0,\frac{1}{2})\) 
\[
\big|\cJ_T^{\tilde U^{*,h}}(x,i)-V_T(x,i)\big|
\le \hat C_6\, h^{\gamma/2}.
\]
\end{theorem}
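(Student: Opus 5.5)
The plan is a two-sided sandwich around the discrete value $V_T^h$, with \cref{cor:CNearFinite} and \cref{Tconvevaluefunc} as the two main ingredients. The interpolated control $\tilde U^{*,h}$ is obtained by freezing $U^{*,h}_{t_k}$ on $[t_k,t_{k+1})$, so it is piecewise constant on the Euler--Maruyama grid and admissible for \cref{E1.1}. The lower bound is then immediate: since $\tilde U^{*,h}\in\Uadm$, we have $V_T(x,i)\le \cJ_T^{\tilde U^{*,h}}(x,i)$. It therefore suffices to bound $\cJ_T^{\tilde U^{*,h}}(x,i)-V_T(x,i)$ from above by $\hat C_6 h^{\gamma/2}$.

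For the upper bound I will write
\[
\cJ_T^{\tilde U^{*,h}}(x,i)-V_T(x,i)=\bigl(\cJ_T^{\tilde U^{*,h}}(x,i)-\cJ_{T,h}^{U^{*,h}}(x,i)\bigr)+\bigl(V_T^h(x,i)-V_T(x,i)\bigr),
\]
using that $\cJ_{T,h}^{U^{*,h}}=V_T^h$ by optimality and \cref{thm:verification_finite}. The second bracket is at most $\hat C_5 h^{\gamma/2}$ by \cref{Tconvevaluefunc}. The first bracket is the continuous-versus-discrete cost of one and the same grid-aligned piecewise-constant control. \cref{cor:CNearFinite} bounds it uniformly over all such controls by $\tilde C h^{\gamma/2}$. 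Behind that bound is \cref{corollary-em}: when the same piecewise-constant control drives both systems, the term $\eta_{1,h}$ vanishes and the mismatch estimate \cref{mismatch_bound} holds with no $\eta$ terms. Adding the two pieces gives $\hat C_6=\tilde C+\hat C_5$.

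The main obstacle is making sure $\hat C_6$ is genuinely independent of $h$ and of the policy, for two reasons. First, $\hat C_5$ in \cref{Tconvevaluefunc} depends on $\epsilon$ through $C_2(\epsilon)$. To handle this I will fix $\epsilon=h^{\gamma/2}$, or simply fix $\epsilon=1$ and note that the $\epsilon$ slack was only used in the other direction. The second difficulty is the more delicate one. $U^{*,h}$ is a feedback of the discrete state $X^h_{t_k}$, so the continuous system driven by $\tilde U^{*,h}$ uses the feedback evaluated on $X^h$, not on $X$. I will treat $\tilde U^{*,h}$ as an open-loop, $\sF$-adapted process built from the Euler--Maruyama path, which is admissible, so that both systems see identical controls. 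The estimates in \cref{t-3.3} then use only the Lipschitz bounds of \hyperlink{B1}{(B1)} and the boundedness constants $M_2,M_3$, which gives uniformity in the control.

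If that uniformity turns out to fail, the fallback is to avoid the $\epsilon$-dependence altogether. In that case I will prove the lower bound $V_T^h\ge V_T-\tilde Ch^{\gamma/2}$ directly through \cref{cor:CNearFinite}, and for the upper bound use $V_T^h\le V_T+\epsilon+\hat C_5(\epsilon)h^{\gamma/2}$ with $\epsilon$ fixed.
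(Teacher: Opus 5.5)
Your main line is the paper's proof. You use $V_T^h=\cJ_{T,h}^{U^{*,h}}$, split $\cJ_T^{\tilde U^{*,h}}-V_T$ through $\cJ_{T,h}^{U^{*,h}}$, and bound the two pieces by \cref{cor:CNearFinite} and \cref{Tconvevaluefunc}. You read $\tilde U^{*,h}$ as the open-loop control generated along the Euler--Maruyama path, so that both systems are driven by the same realized grid-aligned control. That is exactly what \cref{corollary-em} and \cref{cor:CNearFinite} need, and the paper uses it implicitly. A feedback $v_k(X_{t_k},S_{t_k})$ of the true state would not be covered, since $v_k$ is only measurable.

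The step that fails is your handling of the $\epsilon$-dependence of $\hat C_5$. After your correct observation $V_T\le\cJ_T^{\tilde U^{*,h}}$, everything rests on an \emph{upper} bound for $V_T^h-V_T$. That is precisely where the proof of \cref{Tconvevaluefunc} spends the slack: it only yields
\[
V_T^h\le V_T+\epsilon+\hat C_5(\epsilon)\,h^{\gamma/2},
\]
where $\hat C_5(\epsilon)$ contains the term $C_{L_c}C_2(\epsilon)T$. The direction that can be made $\epsilon$-free via \cref{cor:CNearFinite} is the lower bound $V_T^h\ge V_T-\tilde C h^{\gamma/2}$, and your reduction no longer uses it. So the claim that the $\epsilon$ slack was only used in the other direction is backwards.

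Fixing $\epsilon=1$ leaves an $O(1)$ error. Choosing $\epsilon=h^{\gamma/2}$ requires a quantitative growth bound on $C_2(\epsilon)$, the Lipschitz constant of an $\epsilon$-optimal Lipschitz policy, as $\epsilon\downarrow0$. The qualitative density results of \cref{Denseness} do not give such a bound. Your fallback only yields $\limsup_{h\to0}\bigl(\cJ_T^{\tilde U^{*,h}}-V_T\bigr)\le\epsilon$ for every $\epsilon$, i.e.\ convergence with no rate.

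The paper does not resolve this either. It simply quotes the $\epsilon$-free constant asserted in \cref{Tconvevaluefunc}, whose proof lets $\epsilon$ be arbitrary while keeping $\hat C_5(\epsilon)$ fixed. So your argument stands on the same footing as the paper's if you cite \cref{Tconvevaluefunc} as stated and drop the patch. A genuinely $h$-independent $\hat C_6$ would need a new quantitative ingredient, such as a rate for approximating $V_T$ by grid-aligned piecewise-constant controls.
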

\begin{proof}
By the triangle inequality for each $(x,i)\in\Rd\times\mathbb{S}$, (since $V_T^{h}(x,i) = \cJ_{T,h}^{U^{*, h}}(x,i)$) we have
\begin{equation}
|\cJ_{T}^{\tilde{U}^{*, h}}(x,i) - V_T(x,i)| \leq |\cJ_{T}^{\tilde{U}^{*, h}}(x,i) - \cJ_{T,h}^{U^{*, h}}(x,i)| + |V_T^{h}(x,i) - V_T(x,i)|
\end{equation}
each term is $O(h^{\gamma/2})$ by \cref{Tconvevaluefunc} and
\cref{cor:CNearFinite}.
\end{proof}


\subsection{Finite State Approximation of Controlled Regime-Switching Diffusions}

Consider the Markov chain defined in \cref{s-3.1,s-3.2}, taking values in
$\Rd \times \mathbb{S}$, with transition kernel
$P_h(dy,j \mid x,i,\zeta)$ and one-stage cost $c_h$ (see \cref{kernel,discrete-cost}).
We denote the corresponding Markov decision process (MDP) by
\(
\mathcal{M}_h = (\Rd \times \mathbb{S}, \Act, P_h, c_h).
\)
To approximate $\mathcal{M}_h$, we construct a sequence of finite-state models
$\widehat{\mathcal{M}}_{h,n}$. To this end, we first define a compact state MDP.

For any compact set $\cK \subset \Rd$, 
we define the compact state MDP
\(
\mathcal{M}^{\cK} := (\cK \times \mathbb{S}, \Act, p_{\cK}, c_{\cK}),
\)
where:
\begin{itemize}
    \item $p_{\cK}$  
    is a weakly continuous transition kernel in $x,\zeta$, i.e., for $(x_n,\zeta_n)\to (x,\zeta)$, $p_{\cK}(\cdot,\cdot\mid x_n,i,\zeta_n)\to p_{\cK}(\cdot,\cdot\mid x,i,\zeta)$ weakly as \(n\to\infty\) for $i\in\mathbb{S}$.
    \medskip
    \item $c_{\cK}:\cK\times\mathbb{S}\times\Act\to [0,\infty)$ is a continuous one-stage cost function.
\end{itemize}

\medskip
We first approximate the compact-state MDP $\mathcal{M}^{\cK}$ by
finite-state models, which will serve as intermediate approximations
to the original model $\mathcal{M}_h$.

\subsubsection{Finite State Approximation of $\mathcal{M}^{\cK}$}
We first construct a sequence of finite states for a given compact space \(\cK\subset\Rd\) by similar construction as in \cref{finite-action-construction}, let \(\Lambda_n=\{z_{n,l}\}_{l=1}^{l_n}\) be the sequence of finite grids constructed as in \cref{finite-action-construction} for the space $\cK$ with the nearest neighbour quantizer \(\mathcal{Q}_n:\cK\to\Lambda_n\) such that \[\lim_{n\to\infty}
\sup_{z\in \cK} |z-\mathcal{Q}_n(z)| \to 0.
\]

For each $n$, let $\{\mathcal{S}_{n,l}\}_{l=1}^{l_n}$ be the partition of $\cK$ induced by $\mathcal{Q}_n$ and is given by
\begin{align}
\mathcal{S}_{n,l} = \{z \in \cK: \mathcal{Q}_n(z)=z_{n,l}\}, \nonumber
\end{align}
with diameter $\diam(\mathcal{S}_{n,l}) \coloneqq \sup_{z,y\in\mathcal{S}_{n,l}} d_{\cK}(z,y) < 2/n$. Let $\{\nu_n\}$ be a sequence of
probability measures on $\cK$ satisfying
\begin{align}
\nu_n(\mathcal{S}_{n,l}) > 0 \text{  for all  } l,n.  \label{compact:numeas}
\end{align}
We let $\nu_{n,l}$ be the restriction of $\nu_n$ to $\mathcal{S}_{n,l}$ defined by
\begin{align}
\nu_{n,l}(\,\cdot\,) \coloneqq \frac{\nu_n(\,\cdot\,)}{\nu_n(\mathcal{S}_{n,l})}. \nonumber
\end{align}
The measures $\nu_{n,l}$ will be used to define a sequence of finite-state MDPs, denoted as $\widehat {\mathcal{M}}_{h,n}$ ($n\geq1$), to approximate the original model $\mathcal{M}_h$.

\medskip
\noindent
Now we construct a finite-state approximation for $\mathcal{M}^{\cK}$. We approximate the model \(\mathcal{M}^{\cK}\) by the finite state models \(\widehat{\mathcal{M}}_{n}^{\cK}\), defined as follows:

\begin{enumerate}
    \item The state space for the process is $\Lambda_n $
    \item \medskip

\noindent
The transition kernel $p_{n}: \Lambda_n\times\mathbb{S}\times\Act \to \sP(\Lambda_n\times\mathbb{S})$ is defined by
\begin{align*}
p_{n}(\cdot,j\mid z_{n,l},i,\zeta)
=\int_{\mathcal{S}_{n,l}} p_{\cK}\big(\mathcal{Q}_{n}^{-1}(\cdot)\cap\cK,\,j \mid z,i,\zeta\big)\nu_{n,l}(dz),
\end{align*}

\medskip

\noindent
The one-stage cost function $c_{n}:\Lambda_n\times\mathbb{S}\times\Act\to[0,\infty)$ is defined as
\[
c_n(z_{n,l},i,\zeta)=
\int_{\mathcal{S}_{n,l}} c_{\cK}(z,i,\zeta)\nu_{n,l}(dz),   
\]
with the terminal cost \[c_T^n(z_{n,l},i)=c_T(z_{n,l},i)\qquad\text{for}\,\, (z_{n,l},i)\in\Lambda_n\times\bS.\] 
\end{enumerate}


\subsubsection{Finite-horizon cost approximation}

Here we show that the finite-horizon cost defined for the finite state MDPs \(\widehat{\mathcal{M}}_{n}^{\cK}\) 
converges to the finite-horizon cost of the compact state MDP 
$\mathcal{M}^{\cK}$.

\medskip
\noindent
\textbf{Finite-state finite-horizon cost function.}

For each $n$, \((x,i)\in \Lambda_n\times\mathbb{S}\), $k\in\{0,\ldots,K-1\}$, define a function
\begin{equation}\label{quantized_cost1}
\hat {\cJ}_k^n(x,i)=
\min_{\zeta\in \Act}
\left[
c_n(x,i,\zeta)+
\sum_{(y,j)\in \Lambda_n\times\mathbb{S}}
\hat \cJ_{k+1}^n(y,j)p_{n}(y,j\,\mid x,i,\zeta)
\right]
\end{equation}
Under assumptions \hyperlink{B1}{(B1)}-\hyperlink{B2}{(B2)}, \cite{hernandez2012discrete}*{Theorem 3.2.1} implies that $\hat {\cJ}_k^n$ is the value function for the MDP \(\widehat{\mathcal{M}}_{n}^{\cK}\) and there exists an optimal Markov policy as a measurable minimizing selector of  \cref{quantized_cost1}. 

\medskip
\noindent

We extend the functions  \(c_n\,\text{and}\, \hat {\cJ}_k^n\) on \(\cK\times\mathbb{S}\) (still denoted by \(c_n\,\text{and}\,\hat {\cJ}_k^n\) respectively) as
\begin{align*}
c_n(x,i,\zeta)&\df
c_{\cK}(\mathcal{Q}_{n}(x),i,\zeta),\\
  \hat {\cJ}_k^n(x,i)&\df \hat {\cJ}_k^n(\mathcal{Q}_{n}(x),i)\qquad 
 \end{align*}
 \(\text{for all}\, (x,i,\zeta)\in\cK\times\mathbb{S}\times\Act.\)
 
Therefore \(c_{n} \,\text{and}\, \hat {\cJ}_k^n\) are constant on the sets 
\(\mathcal{Q}_{n}^{-1}(x)\), for \(x\in \Lambda_n\). In particular, we can write the summation term in \cref{quantized_cost1} as 
\begin{align*}
    \sum_{(y,j)\in \cK\times\mathbb{S}}
\hat \cJ_{k+1}^n(y,j)p_{n}(y,j\,\mid x,i,\zeta)&=\int_{ \mathcal{S}_{n,h_n(x)}}\bigg( \int_{\cK\times\mathbb{S}}
\hat \cJ_{k+1}^n(y,j)p_{\cK}(dy,dj\;|z,i,\zeta)\bigg) \nu_{n,h_n(x)}(dz)
\end{align*}
where \(h_n:\cK\to \{1,\ldots,l_n\} \) maps \(x\) to the index of the partition \(\mathcal{S}_{n,l}\) it belongs to.

Define a function $\bar{\cJ}_{k}$ on $\cK\times\bS$ by
\begin{align}\label{cpt-cost}
\bar{\cJ}_{k}(x,i)=
\min_{\zeta\in \Act}
\left[
c_\cK(x,i,\zeta)+
\int_{\cK\times\mathbb{S}}
\bar{\cJ}_{k+1}(y,j)p_\cK(y,j\,\mid x,i,\zeta)\right]
\end{align}
From \cite{hernandez2012discrete}*{Theorem 3.2.1} the finite-horizon value function for the MDP $\mathcal{M}^{\cK}$ is $\bar{\cJ}_{k}$ and there exists an optimal Markov policy which is a pointwise minimizer of \cref{cpt-cost}.

\medskip
\noindent
Then we have the following convergence result on finite grids: the value function of the finite state model $\widehat{\mathcal{M}}^{\cK}_n$ converges to the value function of the compact MDP $\mathcal{M}^{\cK}$.
\begin{lemma}\label{compact-space_convergence}
Under the assumptions \hyperlink{B1}{(B1)}-\hyperlink{B2}{(B2)}, for all $k\in \{0,\ldots,K\}$, 
\[
\max_{(x,i)\in\Lambda_n\times\mathbb{S}}|\bar{\cJ}_k(x,i)-\hat {\cJ}_k^n(x,i)|
\to 0\qquad \text{as}\;\;n\to \infty.
\]
\end{lemma}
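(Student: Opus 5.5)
Our plan is a backward induction in $k$, starting from the terminal stage $k=K$ and moving down. The aim is the stronger statement
\[
e_k^n \df \sup_{(x,i)\in\cK\times\mathbb{S}}\bigl|\bar{\cJ}_k(x,i)-\hat{\cJ}_k^n(x,i)\bigr|\to 0,
\]
where $\hat{\cJ}_k^n$ is extended to $\cK\times\mathbb{S}$ by composing with $\mathcal{Q}_n$. This implies the claim on the grid. We also use two preliminary facts.

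\textbf{Preliminary facts.}
\begin{enumerate}
\item Each $\bar{\cJ}_k$ is continuous on the compact set $\cK\times\mathbb{S}$, hence uniformly continuous in $x$ with some modulus $\omega_k$.
\item The one-stage costs and all value functions are uniformly bounded, by $M_3$ and $M_3(1+K)$ respectively.
\end{enumerate}
Fact (1) follows by backward induction, as in \cref{bdd&cts}. The map $(x,\zeta)\mapsto c_\cK(x,i,\zeta)+\int \bar{\cJ}_{k+1}\,dp_\cK(\cdot\mid x,i,\zeta)$ is jointly continuous by weak continuity of $p_\cK$ and continuity of $\bar{\cJ}_{k+1}$. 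Since $\Act$ is compact, Berge's maximum theorem then makes $\bar{\cJ}_k$ continuous.

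\textbf{Base case.} At $k=K$, $\bar{\cJ}_K=c_T$ and $\hat{\cJ}_K^n=c_T\circ\mathcal{Q}_n$. The Lipschitz property in \hyperlink{B2}{(B2)} gives $e_K^n\le C_{L_c}\sup_{z\in\cK}|z-\mathcal{Q}_n(z)|\le 2C_{L_c}/n$.

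\textbf{Inductive step.} Fix $x\in\cK$ with $x\in\mathcal{S}_{n,l}$. We use $|\min_\zeta A-\min_\zeta B|\le\sup_\zeta|A-B|$ and the integral representation of the quantized Bellman operator written just before the lemma. This bounds $|\bar{\cJ}_k(x,i)-\hat{\cJ}^n_k(x,i)|$ by the supremum over $\zeta$ of three terms:
\begin{align*}
&\int_{\mathcal{S}_{n,l}}\bigl|c_\cK(x,i,\zeta)-c_\cK(z,i,\zeta)\bigr|\,\nu_{n,l}(dz)\\
&\quad+\int_{\mathcal{S}_{n,l}}\Bigl|\int\bar{\cJ}_{k+1}\,dp_\cK(\cdot\mid x,i,\zeta)-\int\bar{\cJ}_{k+1}\,dp_\cK(\cdot\mid z,i,\zeta)\Bigr|\,\nu_{n,l}(dz)\\
&\quad+\int_{\mathcal{S}_{n,l}}\int\bigl|\bar{\cJ}_{k+1}-\hat{\cJ}^n_{k+1}\bigr|\,dp_\cK(\cdot\mid z,i,\zeta)\,\nu_{n,l}(dz).
\end{align*}
If the cost term uses $c_n$ (the $\nu_{n,l}$-average), the first term is as written. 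If it uses $c_\cK(\mathcal{Q}_n(x),\cdot)$, the first term is handled the same way.

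\textbf{Bounding the three terms.}
\begin{itemize}
\item The third term is at most $e_{k+1}^n$, which tends to $0$ by the induction hypothesis.
\item The first term is at most $\sup_{|x-z|\le 2/n}\sup_\zeta|c_\cK(x,i,\zeta)-c_\cK(z,i,\zeta)|$. This tends to $0$ by uniform continuity of $c_\cK$ on the compact set $\cK\times\mathbb{S}\times\Act$.
\item The second term is at most $\sup_{|x-z|\le 2/n,\ \zeta\in\Act}|G_{k+1}(x,i,\zeta)-G_{k+1}(z,i,\zeta)|$, where $G_{k+1}(x,i,\zeta)\df\int\bar{\cJ}_{k+1}\,dp_\cK(\cdot\mid x,i,\zeta)$.
\end{itemize}

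\textbf{Main obstacle.} The key point is that the supremum in the second term is taken over all $\zeta$ and over all cells at once. Since $\bar{\cJ}_{k+1}$ is bounded and continuous and $p_\cK$ is weakly continuous in $(x,\zeta)$, $G_{k+1}$ is jointly continuous on the compact set $\cK\times\mathbb{S}\times\Act$. It is therefore uniformly continuous there, so this supremum tends to $0$ as $n\to\infty$. If one instead wants explicit rates in the concrete case $p_\cK=P_h$ restricted to $\cK$, one can use the Lipschitz bounds from \hyperlink{B1}{(B1)}. This leans on the same uniform-continuity argument.

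\textbf{Conclusion.} Combining the three bounds gives $e_k^n\le \epsilon_n^{(k)}+e_{k+1}^n$ with $\epsilon_n^{(k)}\to 0$. Unrolling the recursion over the finitely many stages $k=K,\dots,0$ yields $e_k^n\to 0$ for every $k$, which proves the lemma.
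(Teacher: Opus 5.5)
Your argument is correct. It shares the paper's backward-induction skeleton, but it uses a stronger inductive statement and a different error decomposition, and that difference matters. The paper inducts only on the grid $\Lambda_n\times\mathbb{S}$. At a grid point it bounds the error by $\max_{\zeta}\sup_{z\in\mathcal{S}_{n,h_n(x)}}\bigl|\int\bar{\cJ}_{k+1}\,dp_\cK(\cdot\mid x,i,\zeta)-\int\hat{\cJ}^n_{k+1}\,dp_\cK(\cdot\mid z,i,\zeta)\bigr|$. It then appeals to the grid-only induction hypothesis, weak continuity of $p_\cK$, and ``continuity of $\hat{\cJ}^n_{k+1}$'', and defers the passage to all of $\cK$ to \cref{finite-state-cost_approximation}.

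As written, that step does not close. First, the integrand compares $\bar{\cJ}_{k+1}(y,j)$ with $\hat{\cJ}^n_{k+1}(\mathcal{Q}_n(y),j)$ at arbitrary $y\in\cK$, which a grid-only hypothesis does not control. Second, the extension $\hat{\cJ}^n_{k+1}\circ\mathcal{Q}_n$ is piecewise constant, so weak continuity of $p_\cK$ cannot be applied to it. (Being $n$-dependent, it would also need equicontinuity.)

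You avoid both problems in two moves:
\begin{itemize}
\item You make the hypothesis uniform on $\cK\times\mathbb{S}$.
\item You split the error so that continuity is used only for the $n$-independent objects $c_\cK$ and $G_{k+1}$, which are uniformly continuous on the compact set $\cK\times\Act$ for each $i$. The $n$-dependent function then enters only through the sup-norm term $e^n_{k+1}$.
\end{itemize}

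This buys three things:
\begin{itemize}
\item A single induction proves the lemma and \cref{finite-state-cost_approximation} at once.
\item The mismatch between $c_\cK$ and the cell-averaged $c_n$ is treated explicitly; the paper's first inequality omits it.
\item The argument does not depend on which of the paper's two conventions for $c_n$ off the grid is adopted.
\end{itemize}

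The paper's two-stage layout can be repaired, but only by inserting into the lemma's inductive step the estimate $\sup_{y\in\cK}\bigl|\bar{\cJ}_{k+1}(y,j)-\hat{\cJ}^n_{k+1}(\mathcal{Q}_n(y),j)\bigr|\le\omega_{k+1}(2/n)+\max_{\Lambda_n\times\mathbb{S}}\bigl|\bar{\cJ}_{k+1}-\hat{\cJ}^n_{k+1}\bigr|$. That interleaves the lemma with the theorem, which is exactly what your formulation does from the start.
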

\begin{proof}

We proceed by backward induction.

At $k=K$, $\bar{\cJ}_K(x,i)=\hat{\cJ}_K^n(x,i)=c_T(x,i)$ for all \((x,i)\in
\Lambda_n\times\mathbb{S}\), hence the result holds.

Assume the result holds for $k+1$. Then for $(x,i)\in \Lambda_n\times\mathbb{S}$,
\begin{align*}
    |\bar{\cJ}_k(x,i)-\hat {\cJ}_k^n(x,i)|&\le \Bigg|\max_{\zeta\in \Act} \Bigg[ \int_{\cK\times\mathbb{S}} \bar{\cJ}_{k+1}(y,j)p_{\cK}(dy,dj\mid x,i,\zeta)-\sum_{(y,j)\in \Lambda_n\times\mathbb{S}}
\hat \cJ_{k+1}^n(y,j)p_{n}(y,j|x,i,\zeta)\Bigg]\Bigg|\\
  & \le \Bigg|\max_{\zeta\in \Act}\Bigg[\int_{\cK\times\mathbb{S}} \bar{\cJ}_{k+1}(y,j)p_{\cK}(dy,dj\mid x,i,\zeta)\\
  &\qquad-\int_{\mathcal{S}_{n,h_n(x)}} \Bigg(\int_{\cK\times\mathbb{S}}
\hat \cJ_{k+1}^n(y,j)p_{\cK}(dy,dj\;|z,i,\zeta)\bigg)\nu_{n,h_n(x)}(dz)\Bigg]\Bigg| \\
  & \le \Bigg|\max_{\zeta\in \Act}\Bigg[\int_{\cK\times\mathbb{S}} \bar{\cJ}_{k+1}(y,j)p_{\cK}(dy,dj\mid x,i,\zeta)\\
  &\qquad-\int_{\cK\times\mathbb{S}} 
\hat \cJ_{k+1}^n(y,j)\int_{\mathcal{S}_{n,h_n(x)}}p_{\cK}(dy,dj\;|z,i,\zeta)\nu_{n,h_n(x)}(dz)\Bigg]\Bigg|\\
  & \le \Bigg|\max_{\zeta\in \Act}\Bigg[\sup_{z\in\mathcal{S}_{n,h_n(x)}}\Bigg|\int_{\cK\times\mathbb{S}} \bar{\cJ}_{k+1}(y,j)p_{\cK}(dy,dj\mid x,i,\zeta)\\
  &\qquad-\int_{\cK\times\mathbb{S}} 
\hat \cJ_{k+1}^n(y,j)p_{\cK}(dy,dj\;|z,i,\zeta)\Bigg|\Bigg]\Bigg|
\end{align*}
By the induction hypothesis, together with the continuity and boundedness of 
$\hat{\cJ}_{k+1}^n$ (see \cref{bdd&cts}) and the weak continuity of $p_{\cK}$, 
we obtain, as $n \to \infty$, that 
\( 
\bigl|\bar{\cJ}_k(x,i) - \hat{\cJ}_k^n(x,i)\bigr| \to 0,\) \(\text{for all } (x,i)\in \Lambda_n\times\mathbb{S}, \;\; k=K-1,\ldots,0\).
Since \(\cK\) is compact, uniform continuity of costs implies  uniform convergence on \(\Lambda_n\times\mathbb{S}\).
\end{proof}

\medskip
\noindent
We extend this result to the compact subset $\cK$.

\begin{theorem}\label{finite-state-cost_approximation}
Suppose that Assumptions \hyperlink{B1}{(B1)}--\hyperlink{B2}{(B2)} hold. Then for each $k\in\{0,\ldots,K\}$
\[
\sup_{(x,i)\in \cK\times \mathbb{S}}
|\bar{\cJ}_k(x,i)-\hat {\cJ}_k^n(x,i)|
\to 0\qquad \text{as}\,\,n\to\infty.
\]
\end{theorem}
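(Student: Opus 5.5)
The plan is to pass from the grid $\Lambda_n$ to all of $\cK$ by combining \cref{compact-space_convergence} with uniform continuity of $\bar{\cJ}_k$ on the compact set $\cK\times\mathbb{S}$. Fix $k\in\{0,\dots,K\}$ and $(x,i)\in\cK\times\mathbb{S}$. By construction the extension satisfies $\hat{\cJ}^n_k(x,i)=\hat{\cJ}^n_k(\mathcal{Q}_n(x),i)$ with $\mathcal{Q}_n(x)\in\Lambda_n$. We therefore split
\begin{equation*}
|\bar{\cJ}_k(x,i)-\hat{\cJ}_k^n(x,i)|\le |\bar{\cJ}_k(x,i)-\bar{\cJ}_k(\mathcal{Q}_n(x),i)| + |\bar{\cJ}_k(\mathcal{Q}_n(x),i)-\hat{\cJ}^n_k(\mathcal{Q}_n(x),i)|.
\end{equation*}
The supremum over $(x,i)\in\cK\times\mathbb{S}$ of the second term is at most $\max_{(z,i)\in\Lambda_n\times\mathbb{S}}|\bar{\cJ}_k(z,i)-\hat{\cJ}^n_k(z,i)|$, and this tends to $0$ by \cref{compact-space_convergence}.

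For the first term, we show by backward induction that each $\bar{\cJ}_k(\cdot,i)$ is continuous on $\cK$. For $k=K$ this holds because $c_T$ is Lipschitz by \hyperlink{B2}{(B2)}. Now assume $\bar{\cJ}_{k+1}$ is continuous and bounded on the compact set $\cK\times\mathbb{S}$. The map
\begin{equation*}
(x,\zeta)\mapsto c_{\cK}(x,i,\zeta)+\int_{\cK\times\mathbb{S}}\bar{\cJ}_{k+1}(y,j)\,p_{\cK}(dy,dj\mid x,i,\zeta)
\end{equation*}
is then jointly continuous on $\cK\times\Act$: $c_{\cK}$ is continuous, and $p_{\cK}$ is weakly continuous in $(x,\zeta)$ while $\bar{\cJ}_{k+1}$ is bounded and continuous. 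Since $\Act$ is compact, the minimum over $\zeta$ of a jointly continuous function on $\cK\times\Act$ is continuous in $x$, by Berge's maximum theorem or directly by uniform continuity on the compact product. Hence $\bar{\cJ}_k(\cdot,i)$ is continuous, and therefore uniformly continuous on $\cK$, for each of the finitely many $i$. Because $\sup_{x\in\cK}|x-\mathcal{Q}_n(x)|<2/n\to0$, the first term tends to $0$ uniformly in $(x,i)$.

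The main point requiring care is this continuity induction: we must not rely on \cref{bdd&cts}, which concerns $\mathcal{M}_h$ rather than $\mathcal{M}^{\cK}$. We therefore redo that argument using only the standing hypotheses on $(p_{\cK},c_{\cK})$. A minor issue is that the terminal costs are defined with $c_T^n(z_{n,l},i)=c_T(z_{n,l},i)$. At $k=K$, however, the extension of $\hat{\cJ}^n_K$ is $c_T(\mathcal{Q}_n(x),i)$, so the case $k=K$ also fits the same decomposition, with first term bounded by $C_{L_c}\cdot 2/n$. Combining the two estimates yields the claimed uniform convergence for every $k$.
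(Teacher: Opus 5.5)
Your proposal is correct and follows essentially the same route as the paper: the same triangle inequality through $\mathcal{Q}_n(x)$, with the grid term controlled by \cref{compact-space_convergence} and the remaining term by uniform continuity on the compact set $\cK\times\Act$ (the paper bounds the difference of the two minima by $\max_{\zeta}$ of the differences, i.e.\ your ``directly by uniform continuity'' variant). Your self-contained backward induction for continuity of $\bar{\cJ}_k$, which uses only the continuity of $c_{\cK}$ and the weak continuity of $p_{\cK}$, is a cleaner justification than the paper's appeal to the remark following \cref{thm:verification_finite}, since that remark concerns the value functions of $\mathcal{M}_h$ rather than $\mathcal{M}^{\cK}$.
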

\begin{proof}
For \((x,i)\in\cK\times\mathbb{S}\), $k\in\{0,\ldots,K\}$
\begin{equation*}
    |\bar \cJ_{k}(x,i)-\hat \cJ_{k}^n(x,i)|
    \le |\bar\cJ_{k}(x,i)- \bar\cJ_{k}(\mathcal{Q}_{n}(x),i)|+|\bar\cJ_{k}(\mathcal{Q}_{n}(x),i)-\hat \cJ_{k}^n(\mathcal{Q}_{n}(x),i)|
\end{equation*}
From \cref{compact-space_convergence} second term goes to $0$ as \(n\to\infty\). Now
\begin{align*}
    |\bar\cJ_{k}(x,i)-\bar\cJ_{k}(\mathcal{Q}_{n}(x),i)|
   &\le \max_{\zeta\in \Act} \Bigg[ c_n(x,i,\zeta)-c_n(\mathcal{Q}_{n}(x),i,\zeta)+\int_{\cK\times\mathbb{S}} \bar\cJ_{k+1}(y,j)p_{\cK}(dy,dj\mid x,i,\zeta)\\
    &\qquad-\int_{\cK\times\mathbb{S}} \bar\cJ_{k+1}(y,j)p_{\cK}(dy,dj\mid \mathcal{Q}_{n}(x),i,\zeta)\Bigg]
\end{align*}
let 
\begin{align*}
&\Delta_{\mathcal{Q}_{n}}c=c_n(x,i,\zeta)-c_n(\mathcal{Q}_{n}(x),i,\zeta),\\ 
&\Delta_{\mathcal{Q}_{n}}\bar{\cJ}_{k+1}=\int_{\cK\times\mathbb{S}} \bar{\cJ}_{k+1}(y,j)p_{\cK}(dy,dj\mid x,i,\zeta)-\int_{\cK\times\mathbb{S}} \bar{\cJ}_{k+1}(y,j)p_{\cK}(dy,dj\;|\mathcal{Q}_{n}(x),i,\zeta).
\end{align*}
Since the running cost \(c\) is continuous, by the weak continuity of $p_{\cK}$ and the continuity and boundedness of \(\bar{\cJ}_{k+1}\) (see \cref{bdd&cts}), it follows that the terms \(\Delta_{\mathcal{Q}_{n}}c\) and \(\Delta_{\mathcal{Q}_{n}}\bar{\cJ}_{k+1}\to 0\) as \(n\to \infty\) uniformly on compact sets, hence \(\sup_{(x,i)\in \cK\times\mathbb S}|\bar{\cJ}_{k}(x,i)-\bar{\cJ}_{k}(\mathcal{Q}_{n}(x),i)|\to 0\;\; \text as\; n\to \infty\;\; \forall \;\; k=K-1,\ldots,0.\)
\end{proof}
\medskip
\noindent
Next, we show asymptotic optimality of the optimal controls designed from the models \(\widehat{\mathcal{M}}_n^{\cK}\) in the compact state model  \(\mathcal{M}^\cK\).

Let $\hat{v}_n^{*}$ be an optimal Markov policy for the finite-state model $\widehat{\mathcal{M}}_n^{\cK}$, obtained via the dynamic programming equation and justified by the verification theorem (\cite{hernandez2012discrete}*{Theorem 3.2.1}). Define a policy for the compact state model $\mathcal{M}^\cK$ by extending it to $\cK\times\bS$ (denoted by same $\hat{v}_n^{*}$)
\[
\hat{v}_n^{*}(k,x,i) \df \hat{v}_n^{*}(k,\mathcal{Q}_{n}(x),i)
\quad \forall\, x\in\cK\,, k\in\{0,\ldots,K\},\,i\in\mathbb{S}\]

The next result shows that the \(\hat{v}_n^{*}\) is asymptotically optimal in the compact space model \(\mathcal{M}_h\).
\begin{theorem}\label{finite-state-op-policy}
  Suppose that the assumptions of \cref{finite-state-cost_approximation} hold.  Let \(\hat{v}_n^{*}\in \Um\) be an optimal policy and \(\hat{\cJ}^{n*}_{k}\) be optimal value from time $k\in\{0,\ldots,K\}$ for the finite state model \(\widehat{ \mathcal{M}}_{n}^\cK\). Then for each $k\in\{0,\ldots,K\}$ 
    \begin{equation*}
       \lim_{n\to \infty} \sup_{(x,i)\in \cK\times\mathbb{S}}|\bar{\cJ}_{k}^{\hat{v}_n^{*}}(x,i)-\bar{\cJ}_{k}^{*}(x,i)| = 0
    \end{equation*}
\end{theorem}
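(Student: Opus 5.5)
We argue by backward induction on $k$, comparing the cost $\bar{\cJ}_k^{\hat v_n^*}$ of the extended policy $\hat v_n^*$ in the compact model $\mathcal{M}^{\cK}$ with the finite-model value $\hat{\cJ}_k^n$ (extended to $\cK\times\bS$ via $\mathcal{Q}_n$). We then conclude with \cref{finite-state-cost_approximation}. Since $\bar{\cJ}_k^*=\bar{\cJ}_k$ and $\bar{\cJ}_k\le\bar{\cJ}_k^{\hat v_n^*}$, it suffices to show
\[
\sup_{(x,i)\in\cK\times\bS}\bigl|\bar{\cJ}_k^{\hat v_n^*}(x,i)-\hat{\cJ}_k^n(x,i)\bigr|\to0 .
\]
Combined with $\sup_{\cK\times\bS}|\bar{\cJ}_k-\hat{\cJ}_k^n|\to0$, this gives the claim by the triangle inequality.

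The policy cost satisfies the policy-evaluation recursion
\[
\bar{\cJ}_k^{\hat v_n^*}(x,i)=c_\cK\bigl(x,i,\hat v_n^*(k,x,i)\bigr)+\int_{\cK\times\bS}\bar{\cJ}_{k+1}^{\hat v_n^*}(y,j)\,p_\cK\bigl(dy,dj\mid x,i,\hat v_n^*(k,x,i)\bigr),
\]
with $\bar{\cJ}_K^{\hat v_n^*}=c_T$. Because $\hat v_n^*$ attains the minimum in \cref{quantized_cost1}, the value $\hat{\cJ}_k^n(x,i)$ equals the same expression evaluated at the action $\zeta_n:=\hat v_n^*(k,\mathcal{Q}_n(x),i)$, but with $c_n$ and the averaged kernel $p_n$. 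In the extended form, that kernel is $\int_{\mathcal{S}_{n,h_n(x)}}p_\cK(\cdot\mid z,i,\zeta_n)\,\nu_{n,h_n(x)}(dz)$. Subtracting the two recursions splits the difference into three terms:
\begin{enumerate}
\item a cost term $|c_\cK(x,i,\zeta_n)-c_n(x,i,\zeta_n)|$, bounded by $\sup_\zeta\sup_{|z-z'|<2/n}|c_\cK(z,i,\zeta)-c_\cK(z',i,\zeta)|$, which tends to $0$ by uniform continuity of $c_\cK$ on the compact set $\cK\times\bS\times\Act$;
\item the term $\sup|\bar{\cJ}_{k+1}^{\hat v_n^*}-\hat{\cJ}_{k+1}^n|$, which vanishes by the induction hypothesis;
\item a kernel-mismatch term $\sup_{z\in\mathcal{S}_{n,h_n(x)}}\bigl|\int\hat{\cJ}_{k+1}^n\,dp_\cK(\cdot\mid x,i,\zeta_n)-\int\hat{\cJ}_{k+1}^n\,dp_\cK(\cdot\mid z,i,\zeta_n)\bigr|$.
\end{enumerate}

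The main obstacle is the third term. The integrand $\hat{\cJ}_{k+1}^n$ changes with $n$ and is piecewise constant, so it is not continuous, and weak continuity of $p_\cK$ cannot be applied to it directly. We first replace $\hat{\cJ}_{k+1}^n$ by the fixed function $\bar{\cJ}_{k+1}$, at a cost of $2\sup|\bar{\cJ}_{k+1}-\hat{\cJ}_{k+1}^n|$, which tends to $0$ by \cref{finite-state-cost_approximation}. What remains is $\sup_\zeta\sup_{|x-z|<2/n}\bigl|\int\bar{\cJ}_{k+1}\,dp_\cK(\cdot\mid x,i,\zeta)-\int\bar{\cJ}_{k+1}\,dp_\cK(\cdot\mid z,i,\zeta)\bigr|$. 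By \cref{bdd&cts}, $\bar{\cJ}_{k+1}$ is bounded and continuous, so weak continuity of $p_\cK$ makes $(x,\zeta)\mapsto\int\bar{\cJ}_{k+1}\,dp_\cK(\cdot\mid x,i,\zeta)$ continuous on the compact set $\cK\times\Act$. It is therefore uniformly continuous, and this term tends to $0$ uniformly in $x$ and in the action $\zeta_n$, even though $\zeta_n$ varies with $n$.

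Collecting the three terms gives $\sup_{\cK\times\bS}|\bar{\cJ}_k^{\hat v_n^*}-\hat{\cJ}_k^n|\to0$, which closes the induction. The case $k=K$ is trivial, apart from the harmless discrepancy between $c_T(x,i)$ and $c_T(\mathcal{Q}_n(x),i)$, which is controlled by the Lipschitz bound in \hyperlink{B2}{(B2)}.
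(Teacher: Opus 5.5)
Your proof is correct, but it is organized differently from the paper's. The paper inserts $\hat\cJ_k^{n*}$ via the triangle inequality and then splits $|\bar\cJ_k^{\hat v_n^*}(x,i)-\hat\cJ_k^{n*}(x,i)|$ into two pieces. The first is a grid-point comparison $|\bar\cJ_k^{\hat v_n^*}(\mathcal{Q}_n(x),i)-\hat\cJ_k^{n*}(x,i)|$, handled ``as in'' \cref{compact-space_convergence}. The second is a within-cell oscillation $|\bar\cJ_k^{\hat v_n^*}(x,i)-\bar\cJ_k^{\hat v_n^*}(\mathcal{Q}_n(x),i)|$, which the paper dismisses using weak continuity of $p_\cK$ and boundedness. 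That last step applies weak continuity to the integrand $\bar\cJ_{k+1}^{\hat v_n^*}$. This function depends on $n$ and is generally discontinuous across cell boundaries, because $\hat v_n^*$ is piecewise constant in $x$. Weak continuity of $p_\cK$ by itself therefore does not justify the step. What actually makes it true is the uniform approximation $\bar\cJ_{k+1}^{\hat v_n^*}\approx\hat\cJ_{k+1}^n\approx\bar\cJ_{k+1}$, and that is exactly what your single backward induction on $\sup_{\cK\times\bS}|\bar\cJ_k^{\hat v_n^*}-\hat\cJ_k^n|$ supplies.

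In your scheme, the $n$-dependent policy cost appears only in a comparison under the same kernel, where the sup-norm induction hypothesis suffices. The cross-kernel term is tested against the fixed continuous function $\bar\cJ_{k+1}$ after a uniform swap paid for by \cref{finite-state-cost_approximation}. Uniform continuity of $(x,\zeta)\mapsto\int\bar\cJ_{k+1}\,dp_\cK(\cdot\mid x,i,\zeta)$ on $\cK\times\Act$ then legitimately controls the varying actions $\zeta_n$.

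Your route has two costs. First, you need \cref{finite-state-cost_approximation} uniformly on all of $\cK\times\bS$ at every stage, not just at the end. Second, you need continuity of $\bar\cJ_{k+1}$ for the compact model. That does not follow literally from the remark you cite, which concerns $\mathcal{M}_h$. It does follow from the same backward induction: a minimum over the compact set $\Act$ of a jointly continuous function of $(x,\zeta)$ is continuous in $x$.

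The same swap trick, run as one joint induction for \cref{compact-space_convergence} and \cref{finite-state-cost_approximation}, would also repair the analogous application of weak continuity to the piecewise-constant $\hat\cJ_{k+1}^n$ in those proofs.
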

\begin{proof}
    Consider the function \[\bar{\cJ}_{k}^{\hat{v}_n^{*}}(x,i)=c_n(x,i,\hat{v}_n^{*}(k,\mathcal{Q}_{n}(x),i))+\int_{\cK\times \mathbb{S}}\bar{\cJ}_{k+1}^{\hat{v}_n^{*}}(y,j)p_{\cK}(dy,dj\mid x,i,\hat{v}_n^{*}(k,\mathcal{Q}_{n}(x),i))\] Then
    \begin{equation*}
        |\bar{\cJ}_{k}^{\hat{v}_n^{*}}(x,i)-\bar{\cJ}_{k}^{*}(x,i)|\le
        |\bar{\cJ}_{k}^{\hat{v}_n^{*}}(x,i)-\hat \cJ_{k}^{n*}(x,i)|+
        |\hat \cJ_{k}^{n*}(x,i)-\bar{\cJ}_{k}^{*}(x,i)|
    \end{equation*}
From \cref{finite-state-cost_approximation}, second term \(|\hat \cJ_{k}^{n*}(x,i)-\bar{\cJ}_{k}^{*}(x,i)|\to 0\). Now
\begin{equation*}
    |\bar{\cJ}_{k}^{\hat{v}_n^{*}}(x,i)-\hat \cJ_{k}^{n*}(x,i)|\le  |\bar{\cJ}_{k}^{\hat{v}_n^{*}}(x,i)-\bar{\cJ}_{k}^{\hat{v}_n^{*}}(\mathcal{Q}_{n}(x),i)|+|\bar{\cJ}_{k}^{\hat{v}_n^{*}}(\mathcal{Q}_{n}(x),i)-\hat \cJ_{k}^{n*}(x,i)|
    \end{equation*}
    Arguing as in the proof of \cref{compact-space_convergence} we get \(|\bar{\cJ}_{k}^{\hat{v}_n^{*}}(\mathcal{Q}_{n}(x),i)-\hat \cJ_{k}^{n*}(x,i)|\to 0\) as \(n\to \infty\). Since
    \begin{align*}
   |\bar{\cJ}_{k}^{\hat{v}_n^{*}}(x,i)-\bar{\cJ}_{k}^{\hat{v}_n^{*}}(\mathcal{Q}_{n}(x),i)| &\le \bigg|c_n(x,i,\hat{v}_n^{*}(k,\mathcal{Q}_{n}(x),i))-c_n(\mathcal{Q}_{n}(x),i,\hat{v}_n^{*}(k,\mathcal{Q}_{n}(x),i))\\
   &+\int_{\cK\times \mathbb{S}}\bar{\cJ}_{k+1}^{\hat{v}_n^{*}}(y,j)p_{\cK}(dy,dj\mid x,i,\hat{v}_n^{*}(k,\mathcal{Q}_{n}(x),i))\\&\quad-\int_{\cK\times \mathbb{S}} \bar{\cJ}_{k+1}^{\hat{v}_n^{*}}(y,j)p_{\cK}(dy,dj|\mathcal{Q}_{n}(x),i,\hat{v}_n^{*}(k,\mathcal{Q}_{n}(x),i))\bigg|
\end{align*}
By the Lipschitz continuity of running cost \(c_n\), the weak continuity of $p_{\cK}$ and the boundedness of \(\bar{\cJ}_{k+1}\), we obtain \(|\bar{\cJ}_{k}^{\hat{v}_n^{*}}(x,i)-\bar{\cJ}_{k}^{\hat{v}_n^{*}}(\mathcal{Q}_{n}(x),i)|\to 0\) as \(n\to\infty.\) Hence, combining the above estimates, the result follows.   
\end{proof}

\medskip
\noindent
We now show that the original MDP $\mathcal{M}_h$ can be approximated by
finite-state models $\widehat{\mathcal{M}}_{h,n}$. To this end, we construct
a sequence of compact-state MDPs and use the results
of the previous sub-subsection, which establishes the approximation of a compact-state
MDP by finite-state MDPs.

This yields the approximation scheme
\[
\widehat{\mathcal{M}}_{h,n}
\;\longrightarrow\;
\mathcal{M}_h^{\cX_n}
\;\longrightarrow\;
\mathcal{M}_h.
\]
\subsubsection{Finite State Approximation of $\mathcal{M}_h$}
Let $\cK_n$ be a sequence of compact sets in $\Rd$ such that $\cK_n \subset int\,\cK_{n+1}$ and $\Rd=\bigcup_{n\ge 1} \cK_n$. Let $\{\mu_n\}_{n\ge 1}$ be a sequence of probability measures such that for each $n\geq1$, $\mu_n \in \sP(\cK_n^c)$. Similar to finite state MDPs \(\widehat{\mathcal{M}}_{n}^{\cK}\) construction,
we construct a sequence of compact state MDPs denoted as $\mathcal{M}_h^{\cX_n}$ to approximate the orignal MDP $\mathcal{M}_h$. To this end, for each $n$ let $\cX_n=\cK_n\cup\{\Delta_n\}$   where $\Delta_n\in\cK_n^c$ is a pseudo state. We define the transition probability \(p_{\cX_n}:\cX_n\times\mathbb{S}\times\Act\to\sP(\cX_n\times\mathbb{S})\), one-stage cost function \(c_{\cX_n}:\cX_n\times\mathbb{S}\times\Act\to [0,\infty)\) and the terminal cost by 

\begin{align*}
p_{\cX_n}(\cdot,j\mid x,i,\zeta)
&=\begin{cases} P_h\big(\cdot \cap\, \cK_n,\,j \mid x,i,\zeta\big)+P_h\big( \cK_n^c,\,j \mid z,i,\zeta\big)\delta_{\Delta_n}, \,\,&\text{if}\,\,(x,i)\in\cK_n\times\mathbb{S},\\
\int_{\cK_n^c} \Big( P_h\bigl(\cdot \cap \cK_n,j \mid z,i,\zeta\bigr)+ P_h\bigl(\cK_n^c,j\mid z,i,\zeta\bigr) \delta_{\Delta_n} \Big) \mu_n(dz), \,\,&\text{if}\,\,(x,i)\in\{\Delta_n\}\times\mathbb{S},
\end{cases}\\
c_{\cX_n}(x,i,\zeta)&=
\begin{cases}
 c_h(x,i,\zeta),   &\text{ if } (x,i)\in \cK_n\times\mathbb{S}  \\
\int_{ \cK_n^c} c_h(z,i,\zeta) \mu_n(dz) ,  &\text{ if } x=\Delta_n,\;i\in\mathbb{S}. \nonumber
\end{cases}\\
\hat c_T^n(x,i)&=
\begin{cases}
  c_T(x,i),   &\text{ if } (x,i)\in \cK_n\times\mathbb{S}  \\
\int_{ \cK_n^c} c_T(z,i) \mu_n(dz) ,  &\text{ if } x=\Delta_n,\;i\in\mathbb{S}. 
\end{cases}
\end{align*}
So we have a sequence of compact-state MDPs $\mathcal{M}_{h}^{\cX_n}=(\cX_n\times\mathbb{S},\Act,p_{\cX_n},c_{\cX_n})$

To establish the main result of this section, we introduce, for each $n$, another MDP, denoted by $\mathcal{M}^{\Rd}_n$, with the components $\bigl(\Rd\times\mathbb{S},\Act, q_n,r_n)$ where
\begin{align}
q_n(\,\cdot,j\,|x,i,\zeta) &= \begin{cases}
P_h(\,\cdot\,,j\mid x,i,\zeta),   &\text{ if } x\in \cK_n \,,i\in\mathbb{S} \\
\int_{\cK_n^c} P_h\bigl(\,\cdot\,,j\mid z,i,\zeta) \mu_n(dz) ,  &\text{ if } x \in \cK_n^c\,,i\in\mathbb{S},
\end{cases} \nonumber \\
r_n(x,i,\zeta) &= \begin{cases}
c_h(x,i,\zeta),   &\text{ if } x\in \cK_n \,,i\in\mathbb{S} \\
\int_{\cK_n^c} c_h(z,i,\zeta) \mu_n(dz) ,  &\text{ if } x \in \cK_n^c\,,i\in\mathbb{S}. \nonumber
\end{cases}
\end{align}
The terminal cost is \[\hat c_T^n=
\begin{cases}
c_T(x,i),   &\text{ if } x\in \cK_n \,,i\in\mathbb{S} \\
\int_{\cK_n^c} c_T(z,i) \mu_n(dz) ,  &\text{ if } x \in \cK_n^c\,,i\in\mathbb{S}
\end{cases}\]
For each policy $\pi \in \Um^h$ and initial distribution $\nu \in \sP(\Rd\times\mathbb{S})$, we denote the finite-horizon cost functions for $\mathcal{M}^{\Rd}_n$ by $\cJ_{k,h}^{n,\pi}(\nu)$ and $\cJ_{k,h}^{n*}(\pi,\nu)$.

\medskip
\noindent
Before approximating the cost functional, we first present several auxiliary results.
We begin with a standard weak continuity property of the transition kernel, which follows from the continuity of the coefficients of the controlled regime-switching diffusion and the construction of the Euler--Maruyama scheme; see, e.g., \cite{pradhan2025nearoptimalitydiscretetimeapproximations}*{Theorem 4.2}.

\begin{lemma}[Weak continuity of the transition kernel]\label{lem:weak_continuity}
Suppose that Assumption \hyperlink{B1}{(B1)} holds. Then the transition kernel $P_h(dy,j\mid x,i,\zeta)$ is weakly continuous in $(x,\zeta)$, i.e., for any bounded continuous function $f:\Rd\times\mathbb{S}\to\RR$,
\[
\int_{\Rd\times\mathbb{S}} f(y,j)\,P_h(dy,dj\mid x_n,i,\zeta_n)
\;\longrightarrow\;
\int_{\Rd\times\mathbb{S}} f(y,j)\,P_h(dy,dj\mid x,i,\zeta),
\]
whenever $(x_n,\zeta_n)\to(x,\zeta)$.
\end{lemma}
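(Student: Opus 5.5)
The plan is a coupling argument. On one probability space we realize the one-step Euler--Maruyama transition from $(x_n,i)$ under action $\zeta_n$ and from $(x,i)$ under action $\zeta$, using the same Brownian increment $W_h$ and the same Poisson random measure $\cP$ on $[0,h]\times\RR$. Over one step, \cref{s-3.1,s-3.2} give explicitly
\[
X^{h,n}_h=x_n+b(x_n,i,\zeta_n)h+\upsigma(x_n,i)W_h,\qquad X^{h}_h=x+b(x,i,\zeta)h+\upsigma(x,i)W_h .
\]
The regime $S^h_h$ is the value at time $h$ of a pure-jump process. Its rates $m_{jk}(x_n)$ (respectively $m_{jk}(x)$) are frozen at the initial point and do not depend on the action. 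The jumps are generated by the intervals $\Delta_{jk}(\cdot)$ via \cref{markov-h}.

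Fix $f\in\cC_b(\Rd\times\mathbb{S})$. We must show that $\Exp[f(X^{h,n}_h,S^{h,n}_h)]\to\Exp[f(X^h_h,S^h_h)]$. It suffices to prove that $X^{h,n}_h\to X^h_h$ almost surely and that $\mathbb P(S^{h,n}_h\neq S^h_h)\to0$. Bounded convergence then finishes the argument, because
\[
|\Exp f(X^{h,n}_h,S^{h,n}_h)-\Exp f(X^h_h,S^h_h)|\le \Exp|f(X^{h,n}_h,S^h_h)-f(X^h_h,S^h_h)|+2\|f\|_\infty\,\mathbb P(S^{h,n}_h\neq S^h_h).
\]
The first term tends to $0$ by dominated convergence, since $f(\cdot,j)$ is continuous and $\mathbb{S}$ is finite.

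For the continuous component, the global Lipschitz bound in \hyperlink{B1}{(B1)} gives $|b(x_n,i,\zeta_n)-b(x,i,\zeta_n)|\to0$ and $\upsigma(x_n,i)\to\upsigma(x,i)$. The remaining term $b(x,i,\zeta_n)\to b(x,i,\zeta)$ follows from the continuity of $b$ in $\zeta$ assumed in \hyperlink{A1}{(A1)}. Hence $X^{h,n}_h\to X^h_h$ pointwise in $\omega$. If relaxed actions $\zeta_n\in\pV$ are allowed, $b(x,i,\cdot)$ is still continuous under weak convergence because $b(x,i,\cdot)$ is bounded and continuous on the compact set $\Act$.

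The main obstacle is the regime component. Here I would mimic the mismatch estimate \cref{mismatch_bound}. The two regime processes start together at $i$ and use the same $\cP$, so they separate only at an atom $(s,z)$ of $\cP$ with $z$ in the symmetric difference of $\Delta_{S_{s-}k}(x_n)$ and $\Delta_{S_{s-}k}(x)$. These intervals share left endpoints, so this symmetric difference has Lebesgue measure $|m_{S_{s-}k}(x_n)-m_{S_{s-}k}(x)|$. Computing the compensator as in \cref{mismatch_bound} gives
\[
\mathbb P(S^{h,n}_h\neq S^h_h)\le h\sum_{j}\sum_{k\ne j}|m_{jk}(x_n)-m_{jk}(x)|\le h|\mathbb{S}|^2\sqrt{C_L}\,|x_n-x|\to0 .
\]
This uses the action-independence of the switching rates in this section. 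If the rates depended on $\zeta$, the same bound would still hold provided $m_{jk}$ is continuous in $\zeta$.

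Combining the two components yields the weak convergence $P_h(\cdot\mid x_n,i,\zeta_n)\Rightarrow P_h(\cdot\mid x,i,\zeta)$ claimed in \cref{lem:weak_continuity}.
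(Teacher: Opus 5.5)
Your proof is correct, and it supplies an argument the paper does not give. The paper offers no proof of this lemma. It only says that weak continuity follows from continuity of the coefficients and the construction of the Euler--Maruyama step, and points to Theorem~4.2 of Pradhan--Y\"uksel. That result concerns diffusions without switching, so the regime component is never actually treated there.

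Your synchronous coupling fills this in. The diffusion part converges pathwise. The regime part is handled by the same compensator computation as in \eqref{mismatch_bound} together with \eqref{2.32}, which gives the explicit bound $\mathbb P(S^{h,n}_h\neq S^h_h)\le C\,h\,\abs{x_n-x}$. You are also right that (B1) alone gives no continuity in $\zeta$. Continuity of $b$ in the action from (A1) must therefore remain in force, and the lemma as stated relies on this silently.

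A shorter route is available. Within one step the drift and diffusion use the frozen regime $S^h_{t_k}=i$, the switching rates are frozen at $X^h_{t_k}=x$, and $W$ and $\cP$ are independent. Hence the one-step kernel factorizes as the Gaussian $\mathcal N\bigl(x+b(x,i,\zeta)h,\;h\,\upsigma\upsigma\transp(x,i)\bigr)$ times row $i$ of $e^{hQ(x)}$, where $Q(x)=(m_{jk}(x))_{j,k\in\mathbb{S}}$. Weak continuity then follows directly from continuity of the Gaussian parameters and of the matrix exponential in $x$.

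The factorization argument is quicker. Your coupling buys a quantitative rate in $\abs{x_n-x}$ for the regime mismatch. It also does not rely on the conditional independence of the two components within a step.
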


Next, we state a lemma, adapted from \cite{MR3722422}*{Lemma 3.1} (see also \cite{DUFOUR20121254}*{Lemma 2.9}), which ensures that if the initial state lies in a compact set, then with high probability the controlled process remains in a compact set over all time horizons.
\begin{lemma}\label{lem:compact_containment}
Suppose that the control model $\mathcal{M}_h$ satisfies assumption {\hyperlink{B1}{(B1)}}. For any compact subset $\cK$ of $\Rd$ and for any $\varepsilon>0$, there exists a compact subset $\cK_{\varepsilon}$ of $\Rd$ such that
\begin{align}
\sup_{(x,i,\zeta) \in \cK\times\mathbb{S}\times\Act}  P_h(\cK_{\varepsilon}^c,\mathbb{S}\mid x,i,\zeta) < \varepsilon,\nonumber
\end{align}
where $\cK_{\varepsilon}^c$ denotes the complement of the set $\cK_{\varepsilon}$.
\end{lemma}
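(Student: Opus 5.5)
The plan is to get a uniform second-moment bound for one Euler--Maruyama step and then apply Markov's inequality. Since $\mathbb{S}$ is finite, only the diffusion component has to be controlled. Fix $(x,i,\zeta)\in\cK\times\mathbb{S}\times\Act$ and start the scheme \cref{s-3.1}--\cref{s-3.2} at $X^h_0=x$, $S^h_0=i$, with the constant action $\zeta$ on $[0,h)$. On the first step the coefficients are frozen at the initial point, so
\[
X^h_{h}=x+b(x,i,\zeta)\,h+\upsigma(x,i)\,(W_h-W_0).
\]
In particular, the law of $X^h_h$ does not depend on the evolution of $S^h$ during $[0,h)$: only its endpoint $S^h_h$ enters the $j$-component of $P_h$. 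Summing over $j\in\mathbb{S}$ therefore gives
\[
P_h(\cK_\varepsilon^c,\mathbb{S}\mid x,i,\zeta)=\mathbb{P}\bigl(X^h_h\notin\cK_\varepsilon\bigr).
\]

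Next I bound the second moment using \hyperlink{B1}{(B1)}, specifically $|b|^2+\norm{\upsigma}^2\le M_2$, together with the It\^o isometry:
\[
\Exp|X^h_h-x|^2\le 2|b(x,i,\zeta)|^2h^2+2\norm{\upsigma(x,i)}^2h\le 2M_2(h^2+h)=:C_h .
\]
This bound is uniform in $(x,i,\zeta)$. Let $R_0:=\sup_{x\in\cK}|x|<\infty$ and take $\cK_\varepsilon:=\bar{\sB}_{R_0+r}$, with $r>0$ to be chosen. If $X^h_h\notin\cK_\varepsilon$, then $|X^h_h-x|>r$. Chebyshev's inequality then gives
\[
\sup_{(x,i,\zeta)\in\cK\times\mathbb{S}\times\Act}P_h(\cK_\varepsilon^c,\mathbb{S}\mid x,i,\zeta)\le \frac{C_h}{r^2}.
\]
Choosing $r$ with $r^2>C_h/\varepsilon$ makes the right-hand side less than $\varepsilon$, and $\cK_\varepsilon$ is compact.

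The argument is essentially routine. The only point needing care is the reduction in the first step: one has to confirm that the $\Rd$-marginal of $P_h(\cdot,\cdot\mid x,i,\zeta)$ is the law of the frozen-coefficient Gaussian step. This holds because in the scheme the drift and diffusion are evaluated at $(X^h_{t_h},S^h_{t_h},U^h_{t_h})$, so regime jumps inside $[0,h)$ do not affect $X^h_h$. If one prefers a route that avoids \hyperlink{B1}{(B1)} boundedness, the alternative is tightness: the family $\{P_h(\cdot\mid x,i,\zeta)\}$ is the continuous image, by \cref{lem:weak_continuity}, of the compact set $\cK\times\mathbb{S}\times\Act$, hence weakly compact. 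Prokhorov's theorem then gives the same conclusion, and this is essentially the argument of \cite{MR3722422}*{Lemma 3.1}.
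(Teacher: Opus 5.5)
Your proof is correct, but its main line differs from the paper's. The paper gives no self-contained proof of this lemma; it imports it from \cite{MR3722422}*{Lemma 3.1}. That argument is the one you mention at the end: by \cref{lem:weak_continuity}, $(x,i,\zeta)\mapsto P_h(\cdot,\cdot\mid x,i,\zeta)$ is weakly continuous on the compact set $\cK\times\mathbb{S}\times\Act$. Its image is therefore a weakly compact family of probability measures, hence tight by Prokhorov's theorem.

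Your primary argument instead uses the explicit form of one Euler--Maruyama step. The coefficients are frozen at $(x,i,\zeta)$ over the step, so the $\Rd$-marginal of $P_h(\cdot,\cdot\mid x,i,\zeta)$ is Gaussian with mean $x+b(x,i,\zeta)h$ and covariance $h\,\upsigma\upsigma\transp(x,i)$, whatever the regime does inside the step. Chebyshev's inequality then finishes the proof.

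Your route has several advantages:
\begin{itemize}
\item It is elementary and does not rely on \cref{lem:weak_continuity}, which the paper itself only justifies by citation.
\item It gives an explicit set $\cK_\varepsilon=\bar{\sB}_{R_0+r}$.
\item Since $2M_2(h^2+h)\le 4M_2$ for $h\le 1$, the same $\cK_\varepsilon$ works uniformly in $h\in(0,1]$. This is more than the lemma asks and is convenient when the limits in $n$ and $h$ are combined in the final theorem.
\item It only needs $b$ and $\upsigma$ to be bounded on $\cK\times\mathbb{S}\times\Act$, which continuity already guarantees, rather than the global bound in \hyperlink{B1}{(B1)}.
\end{itemize}
The Prokhorov argument, by contrast, is non-constructive but independent of the scheme. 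It applies verbatim to any weakly continuous kernel on a Polish space, so it would survive replacing the Euler--Maruyama step by another discretization.
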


The following lemma guarantees that the MDP $\mathcal{M}_{h}^{\cX_n}$ and $\mathcal{M}^{\Rd}_n$ are equivalent
\begin{lemma}\label{lemma1}
Under assumption \hyperlink{B2}{(B2)}, for each $k\in\{0,\ldots,K\}$  we have
\begin{align}
\cJ_{k,h}^{n*}(x,i) = \begin{cases}
\bar{\cJ}_{k,h}^{n*}(x,i),   &\text{ if } x\in \cK_n \,,i\in\mathbb{S} \\
\bar{\cJ}_{k,h}^{n*}(\Delta_n,i) ,  &\text{ if } x \in \cK_n^c\,,i\in\mathbb{S},
\end{cases}\label{eq50}
\end{align}
where $\cJ_{k,h}^{n*}$ is the finite-horizon value function of $\mathcal{M}^{\Rd}_n$ and $\bar{\cJ}_{k,h}^{n*}$ is the finite-horizon value function of $\mathcal{M}_{h}^{\cX_n}$, provided that there exist optimal deterministic Markov policies for $\mathcal{M}^{\Rd}_n$ and $\mathcal{M}_{h}^{\cX_n}$. Furthermore, if for any deterministic Markov policy $\bar v$, we define ${v}(k,x,i)=\bar v(k,x,i)$ on $\cK_n\times\bS$ and $v(k,x,i)=\bar v(k,\Delta_n,i)$ on $\cK_n^c\times\bS$, then
\begin{align}
\cJ_{k,h}^{n,v}(x,i) = \begin{cases}
\bar{\cJ}_{k,h}^{n,\bar v}(x,i),   &\text{ if } x\in \cK_n \,,i\in\mathbb{S} \\
\bar{\cJ}_{k,h}^{n,\bar v}(\Delta_n,i) ,  &\text{ if } x \in \cK_n^c\,,i\in\mathbb{S}.
\end{cases}\label{eq51}
\end{align}
In particular, if the deterministic Markov policy $\bar v_n^* $ is optimal for $\mathcal{M}_{h}^{\cX_n}$, then its extension ${v}_n^*$ to $\Rd$ is also optimal for $\mathcal{M}^{\Rd}_n$.
\end{lemma}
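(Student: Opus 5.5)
The plan is a backward induction on $k=K,K-1,\dots,0$, proving the policy identity \cref{eq51} first and then the value identity \cref{eq50}. The point is that $\mathcal{M}^{\Rd}_n$ and $\mathcal{M}_{h}^{\cX_n}$ coincide once we collapse $\cK_n^c\times\bS$ onto the pseudo-states $\{\Delta_n\}\times\bS$. Define the collapsing map $\Phi_n:\Rd\to\cX_n$ by $\Phi_n(x)=x$ for $x\in\cK_n$ and $\Phi_n(x)=\Delta_n$ for $x\in\cK_n^c$. The structural facts to verify directly from the definitions are:
\begin{itemize}
\item[(a)] $r_n(x,i,\zeta)=c_{\cX_n}(\Phi_n(x),i,\zeta)$ and $\hat c_T^n(x,i)=\hat c_T^n(\Phi_n(x),i)$;
\item[(b)] for any bounded measurable $g$ on $\Rd\times\bS$ that is constant on $\cK_n^c\times\{j\}$ for each $j$, i.e.\ $g(y,j)=\bar g(\Phi_n(y),j)$,
\[
\int_{\Rd\times\bS} g(y,j)\,q_n(dy,dj\mid x,i,\zeta)=\int_{\cX_n\times\bS}\bar g(w,j)\,p_{\cX_n}(dw,dj\mid \Phi_n(x),i,\zeta).
\]
\end{itemize}
Fact (b) follows by splitting $\int g\,dq_n$ into the part over $\cK_n$, which reproduces $P_h(\cdot\cap\cK_n,j\mid\cdot)$, and the part over $\cK_n^c$, which contributes $\bar g(\Delta_n,j)\,P_h(\cK_n^c,j\mid\cdot)$ and hence the Dirac mass at $\Delta_n$. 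In the case $x\in\cK_n^c$, this splitting is carried out inside the $\mu_n(dz)$ average. Here I read the $z$ in the first line of $p_{\cX_n}$ as $x$, a typo in the definition.

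For \cref{eq51}, fix a deterministic Markov $\bar v$ and its extension $v$, so that $v(k,x,i)=\bar v(k,\Phi_n(x),i)$. At $k=K$ both costs equal the terminal cost, and (a) gives $\cJ^{n,v}_{K,h}(x,i)=\bar\cJ^{n,\bar v}_{K,h}(\Phi_n(x),i)$. Suppose $\cJ^{n,v}_{k+1,h}(y,j)=\bar\cJ^{n,\bar v}_{k+1,h}(\Phi_n(y),j)$. Then $\cJ^{n,v}_{k+1,h}$ is constant on $\cK_n^c\times\{j\}$. The policy-evaluation recursion
\[
\cJ^{n,v}_{k,h}(x,i)=r_n\bigl(x,i,v(k,x,i)\bigr)+\int \cJ^{n,v}_{k+1,h}\,dq_n\bigl(\cdot\mid x,i,v(k,x,i)\bigr)
\]
then matches, through (a) and (b) with action $\bar v(k,\Phi_n(x),i)$, the recursion for $\bar\cJ^{n,\bar v}_{k,h}$ at $(\Phi_n(x),i)$. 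This closes the induction.

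For \cref{eq50} I run the same induction on the dynamic programming equations, using the verification theorem \cite{hernandez2012discrete}*{Theorem 3.2.1}, which is valid under the assumed existence of optimal deterministic Markov policies. The induction hypothesis makes $\cJ^{n*}_{k+1,h}$ constant on $\cK_n^c\times\{j\}$, so by (a) and (b) the bracket
\[
r_n(x,i,\zeta)+\int\cJ^{n*}_{k+1,h}\,dq_n(\cdot\mid x,i,\zeta)
\]
equals the corresponding bracket for $\mathcal{M}^{\cX_n}_h$ at $(\Phi_n(x),i,\zeta)$ for every $\zeta\in\Act$. Taking $\min_\zeta$ gives \cref{eq50} at stage $k$. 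The same identity of brackets also shows that a minimizing selector $\bar v_n^*$ for $\mathcal{M}_h^{\cX_n}$ yields, through its extension $v_n^*$, a minimizing selector for $\mathcal{M}^{\Rd}_n$. Hence $v_n^*$ is optimal by part (2) of the verification theorem, or alternatively by combining \cref{eq51} with \cref{eq50}.

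I expect the only real obstacle to be the bookkeeping in (b) when $x\in\cK_n^c$. There one has to check that the mass $q_n$ places on $\cK_n^c$ is entirely transferred to $\delta_{\Delta_n}$ inside the $\mu_n$-average, and that measurability and the interchange of integrals hold. Both are handled by Fubini's theorem, since all quantities are bounded. Assumption \hyperlink{B2}{(B2)} is used only to guarantee boundedness of the costs, so that every integral is finite.
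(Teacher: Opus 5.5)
Your argument is correct, but for \cref{eq50} it takes a different route from the paper.

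For \cref{eq51} the idea is the same as the paper's: points of $\cK_n^c$ share one-stage costs and transition laws, so they behave like $\Delta_n$. Your identity (b) says that $p_{\cX_n}(\cdot\mid\Phi_n(x),i,\zeta)$ is the image of $q_n(\cdot\mid x,i,\zeta)$ under $(y,j)\mapsto(\Phi_n(y),j)$. This is exactly what the paper leaves implicit. You are also right that the $z$ in the first line of $p_{\cX_n}$ must be read as $x$.

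For \cref{eq50}, the paper argues at the level of policies:
\begin{itemize}
\item it takes an optimal $v$ for $\mathcal{M}^{\Rd}_n$ and freezes its stage-$k$ action on $\cK_n^c$ at $v(k,z,i)$ for a fixed $z\in\cK_n^c$;
\item it shows, by contradiction with the optimality of $v$ at every state, that $\cJ^{n,v}_{k,h}$ is already constant on $\cK_n^c$;
\item hence the infimum over all Markov policies equals the infimum over policies constant on $\cK_n^c$, and combining this with \cref{eq51} gives \cref{eq50}.
\end{itemize}

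You instead compare the two Bellman operators directly. Once the continuation value is constant on each $\cK_n^c\times\{j\}$, the bracket at $(x,i,\zeta)$ coincides with the bracket at $(\Phi_n(x),i,\zeta)$ for every $\zeta$, so the minima agree.

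Your route buys several things:
\begin{itemize}
\item it is shorter;
\item it explains why the paper's Case 2 cannot occur, since the bracket does not depend on $x\in\cK_n^c$ at all;
\item it gives the optimality of the extension $v_n^*$ for free;
\item pushed one step further, it removes the need to assume an optimal policy for $\mathcal{M}^{\Rd}_n$. The function $\bar\cJ^{n*}_{k,h}(\Phi_n(\cdot),\cdot)$, together with the pulled-back minimizing selectors, solves the dynamic programming recursion of $\mathcal{M}^{\Rd}_n$, and the verification theorem then identifies it with the value function.
\end{itemize}

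In exchange, you rely on the value functions of both models satisfying their dynamic programming equations with attained minima. The paper uses only the policy-evaluation recursion, together with the optimality of a given policy at every stage and state. Both requirements are covered by the lemma's proviso.
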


\begin{proof}
    See the Appendix.
\end{proof}

The following result gives us the convergence of value function of $\mathcal{M}^{\Rd}_n$ to the value function of the orignal MDP $\mathcal{M}_{h}$.

\begin{lemma}\label{lemma2}
Under assumptions \hyperlink{B1}{(B1)}-\hyperlink{B2}{(B2)}. For any compact set $\cK\subset \Rd$, $k\in\{0,\ldots,K\}$, we have
\begin{align}
\lim_{n\rightarrow\infty} \sup_{(x,i)\in \cK\times\mathbb{S}} |\cJ_{k,h}^{n*}(x,i) - \cJ_{k,h}^*(x,i)| = 0 \label{eq17}
\end{align}
\end{lemma}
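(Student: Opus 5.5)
We argue by backward induction on $k$, comparing the dynamic programming recursions of $\mathcal{M}^{\Rd}_n$ and $\mathcal{M}_h$. Both value functions are given by \cref{thm:verification_finite}:
\begin{align*}
\cJ_{k,h}^{n*}(x,i)&=\min_{\zeta\in\Act}\Big[r_n(x,i,\zeta)+\int \cJ_{k+1,h}^{n*}(y,j)\,q_n(dy,dj\mid x,i,\zeta)\Big],\\
\cJ_{k,h}^{*}(x,i)&=\min_{\zeta\in\Act}\Big[c_h(x,i,\zeta)+\int \cJ_{k+1,h}^{*}(y,j)\,P_h(dy,dj\mid x,i,\zeta)\Big],
\end{align*}
with terminal data $\hat c^n_T$ and $c_T$ respectively. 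All these functions are bounded by $K h M_3+M_3\le (T+1)M_3$, uniformly in $n$. The base case $k=K$ is immediate. Given any compact $\cK$, for $n$ large we have $\cK\subset\cK_n$, so $\hat c_T^n=c_T$ on $\cK\times\mathbb{S}$, and the supremum is eventually zero.

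Since $\cK_n\uparrow\Rd$ with $\cK_n\subset \mathrm{int}\,\cK_{n+1}$, every compact set lies in $\cK_n$ for $n$ large. The induction hypothesis we carry is therefore the statement for every compact set, namely
\[
\sup_{(y,j)\in\cK'\times\mathbb{S}}\bigl|\cJ_{k+1,h}^{n*}-\cJ_{k+1,h}^*\bigr|\to0 \quad\text{for every compact } \cK'.
\]

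For the inductive step, fix $\cK$ and take $n$ with $\cK\subset\cK_n$. On $\cK\times\mathbb{S}$ we have $r_n=c_h$ and $q_n=P_h$. Using $|\min_\zeta A-\min_\zeta B|\le\sup_\zeta|A-B|$, we get
\[
\sup_{\cK\times\mathbb{S}}|\cJ_{k,h}^{n*}-\cJ_{k,h}^*|\le\sup_{(x,i,\zeta)\in\cK\times\mathbb{S}\times\Act}\int\bigl|\cJ_{k+1,h}^{n*}-\cJ_{k+1,h}^*\bigr|(y,j)\,P_h(dy,dj\mid x,i,\zeta).
\]
Given $\varepsilon>0$, \cref{lem:compact_containment} supplies a compact $\cK_\varepsilon$ with $\sup_{\cK\times\mathbb{S}\times\Act}P_h(\cK_\varepsilon^c,\mathbb{S}\mid x,i,\zeta)<\varepsilon$. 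Split the integral over $\cK_\varepsilon$ and $\cK_\varepsilon^c$. The first part is at most $\sup_{\cK_\varepsilon\times\mathbb{S}}|\cJ_{k+1,h}^{n*}-\cJ_{k+1,h}^*|$, which tends to $0$ by the induction hypothesis applied with $\cK'=\cK_\varepsilon$. The second part is at most $2(T+1)M_3\varepsilon$ by the uniform bound. Taking $\limsup_n$ and then letting $\varepsilon\downarrow0$ closes the induction.

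The main obstacle is the tail step: the induction hypothesis must hold on arbitrary compacts, not only on the fixed $\cK$, and it must be combined with tightness of $P_h$ that is uniform in the starting point and action. \cref{lem:compact_containment} provides exactly this uniformity. The uniform boundedness of the value functions in $n$, which follows from (B2), controls the escaping mass. Note that no continuity of the value functions is needed in this step, since the kernels $q_n$ and $P_h$ coincide on $\cK_n$.
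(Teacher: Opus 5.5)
Your proof is correct and follows the paper's own argument. Both use backward induction, the identities $r_n=c_h$ and $q_n=P_h$ on $\cK\subset\cK_n$, a split of the integral over $\cK_\varepsilon$ and $\cK_\varepsilon^c$ from \cref{lem:compact_containment}, and uniform boundedness to control the escaping mass. Your bound $(T+1)M_3$ is more accurate than the paper's $M_3$, and you state explicitly that the induction hypothesis holds on every compact set, which the paper uses only implicitly.
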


\begin{proof}

We prove \eqref{eq17} by backward induction on $k$.

For $k=K$,
let $\cK \subset \Rd$ be compact. For sufficiently large $n$, we have
$\cK \subset \cK_n$, and hence $r_n = c_h$ on $\cK$.
Therefore,
\(
\cJ_{K,h}^{n*}(x,i) = \cJ_{K,h}^*(x,i) = \hat c_T^n(x,i),
\quad \forall\, (x,i)\in \cK \times \mathbb{S}.
\)
Now assume the claim holds for some $k \in\{0,\ldots,K-1\}$, and fix a compact set
$\cK \subset \Rd$. Recall the compact set $\cK_\varepsilon$ from
\cref{lem:compact_containment}. By the construction of $q_n$ and $r_n$, there exists
$n_0 \ge 1$ such that for all $n \ge n_0$,
\(
q_n = P_h, \quad r_n = c_h \quad \text{on } \cK.
\)
With these observations, for each $n \ge n_0$, we have
\begin{align}
\sup_{(x,i)\in\cK\times\mathbb{S}} |{\cJ}_{k,h}^{n*}(x,i) -\cJ_{k,h}^*(x,i)|
&= \sup_{(x,i)\in \cK\times\mathbb{S}} \biggl| \inf_{\zeta\in\Act} \biggl[ c_h(x,i,\zeta) +  \hspace{-5pt} \int_{\Rd\times\mathbb{S}} \cJ_{k+1,h}^{n*}(y,j) P_h(dy,dj\mid x,i,\zeta) \biggr]  \nonumber \\
&\qquad- \inf_{\zeta\in\Act} \biggl[ c_h(x,i,\zeta) +  \hspace{-5pt} \int_{\Rd\times\mathbb{S}} \cJ_{k+1,h}^*(y,j) P_h(dy,dj\mid x,i,\zeta) \biggr] \biggr|  \nonumber \\
&\leq  \sup_{(x,i,\zeta) \in \cK\times\Act} \biggl| \int_{\Rd\times\mathbb{S}} \cJ_{k+1,h}^{n*}(y,j) P_h(dy,dj\mid x,i,\zeta)  \nonumber \\
&\qquad- \int_{\Rd\times\mathbb{S}} \cJ_{k+1,h}^*(y,j) P_h(dy,dj\mid x,i,\zeta) \biggr| \nonumber \\
&=  \sup_{(x,i,\zeta) \in \cK\times\Act} \biggl| \int_{\cK_{\varepsilon}} \bigl(\cJ_{k+1,h}^{n*}(y,j) - \cJ_{k+1,h}^*(y,j)\bigr) \hspace{3pt} P_h(dy,dj\mid x,i,\zeta)  \nonumber \\
&\qquad+ \int_{\cK_{\varepsilon}^c} \bigl(\cJ_{k+1,h}^{n*}(y,j) - \cJ_{k+1,h}^*(y,j)\bigr) \hspace{3pt} P_h(dy,dj\mid x,i,\zeta)\biggr| \nonumber\\
&\leq  \sup_{(x,i)\in \cK_{\varepsilon}\times\mathbb{S}} |\cJ_{k+1,h}^{n*}(x,i) - \cJ_{k+1,h}^*(x,i)|  \nonumber \\
&\qquad+ \sup_{(x,i,\zeta) \in \cK\times\Act} \biggl| \int_{\cK_{\varepsilon}^c} \bigl(\cJ_{k+1,h}^{n*}(y,j) - \cJ_{k+1,h}^*(y,j)\bigr) \hspace{3pt} P_h(dy,dj\mid x,i,\zeta)\biggr| \nonumber
\end{align}
Note that we have $|{\cJ}_{k+1,h}^{n*}|,\,|\cJ_{k+1,h}^*| \leq M_3$ by \hyperlink{B2}{(B2)}. 
Then by \cref{lem:compact_containment} we have
\begin{align}
\sup_{(x,i)\in\cK\times\mathbb{S}} |\cJ_{k,h}^{n*}(x,i) -\cJ_{k,h}^*(x,i)| &\leq  \sup_{(x,i)\in \cK_{\varepsilon}\times\mathbb{S}} |\cJ_{k+1,h}^{n*}(x,i) - \cJ_{k+1,h}^*(x,i)| +   2M_3 \epsilon. \nonumber
\end{align}
Since the first term converges to zero as $n\rightarrow\infty$ by the induction hypothesis, and $\epsilon$ is arbitrary, the claim is true for $k$. This completes the proof. 
\end{proof}
\medskip
\noindent
Now we compute a near-optimal policy for the original MDP $\mathcal{M}_h$ using \cref{finite-state-op-policy} and the above results. Note that continuity of $c_{\cX_n}$ and weak continuity of $p_{\cX_n}$ follow from the continuity of $c_h$ and the weak continuity of $P_h$, respectively. Hence, for each $n$, it is easy to check the MDP $\mathcal{M}_h^{\cX_n}$ satisfies the assumptions
of \cref{finite-state-op-policy}. Let $\{\varepsilon_n\}$ be a sequence of positive real numbers such that 
$\varepsilon_n \to 0$ as $n \to \infty$.

By \cref{finite-state-op-policy}, for each $n\geq1$, there exists an optimal policy $v_n$, obtained from the finite state approximations of $\mathcal{M}_{h}^{\cX_n}$, such that 
\begin{align}
\sup_{(x,i) \in \cX_n\times\mathbb{S}}| \bar \cJ_{k,h}^{n,v_n}(x,i) - \bar{\cJ}_{k,h}^{n*}(x,i) | \leq \varepsilon_n, \nonumber 
\end{align}
where for each $n$, finite-state models $\widehat{\mathcal{M}}_{h,n}$ are constructed replacing $\bigl( \cK\times\mathbb{S},\Act,p_{\cK},c_{\cK} \bigr)$ with the components $\bigl( \cX_n\times\mathbb{S},\Act,p_{\cX_n},c_{\cX_n}\bigr)$ of $\mathcal{M}_{h}^{\cX_n}$ in the previous sub-subsection. By \cref{lemma1}, for each $n\geq1$ we also have
\begin{align}\label{eq22}
\sup_{(x,i) \in \Rd\times\mathbb{S}}| \cJ_{k,h}^{n,v_n}(x,i) - \cJ_{k,h}^{n*}(x,i) | \leq \varepsilon_n, 
\end{align}
where, with an abuse of notation, we also denote the extended (to $\Rd\times\mathbb{S}$) policy by $v_n$. Let us define the finite-horizon cost corresponding to the policy $v_n$ for MDP $\mathcal{M}_{n}^{\Rd}$  as
\begin{align*}
\cJ_{k,h}^{n,v_n}(x,i)  = \begin{cases}
c_h(x,i,v_n(k,x,i)) \\
\qquad
+  \int_{\Rd\times\mathbb{S}} {\cJ}_{k+1,h}^{n,v_n} (y,j) P_h(dy,dj\mid x,i,v_n(k,x,i)),  &\text{ if } (x,i)\in \cK_n\times\mathbb{S}  \\
\int_{\cK_n^c} \bigl[ c_h(z,i,v_n(k,z,i)) \\
\qquad+  \int_{\Rd\times\mathbb{S}} {\cJ}_{k+1,h}^{n,v_n}(y,j) P_h(dy,dj\mid z,i,v_n(k,z,i)) \bigr] \mu_n(dz),  &\text{ if }(x,i) \in \cK_n^c\times\mathbb{S},
\end{cases} 
\end{align*}
For each n, \cite{hernandez2012discrete}*{Theorem 3.2.1} implies that there exists an optimal Markov policy $v_n^*$ for MDP $\mathcal{M}_{n}^{\Rd}$
which satisfies the optimality equation. Hence, we have
\[\cJ_{k,h}^{n*}(x,i)=\cJ_{k,h}^{n}(v_n^*,x,i)\]
Similarly, for the  original MDP $\mathcal{M}_{h}$, the finite-horizon cost corresponding to $v_n$ \text{as}
\begin{equation*}
\cJ_{k,h}^{v_n}(x,i)= c(x,i,v_n(k,x,i)) +  \int_{\Rd\times\mathbb{S}} \cJ_{k+1,h}^{v_n}(y,j) P_h(dy,dj\mid x,i,v_n(k,x,i)).
\end{equation*}

\begin{lemma}\label{lemma3} Suppose assumptions \hyperlink{B1}{(B1)}-\hyperlink{B2}{(B2)} holds. Then, 
for any compact set $\cK \subset \Rd$, we have
\begin{align}
\lim_{n\rightarrow\infty} \sup_{(x,i) \in \cK\times\mathbb{S}} |\cJ_{k,h}^{n,v_n}(x,i) - \cJ_{k,h}^{v_n}(x,i)| = 0. \nonumber 
\end{align}
Indeed, this is true for all sequences of policies in $\Um^h$.
\end{lemma}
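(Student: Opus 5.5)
The plan is a backward induction on $k\in\{0,\ldots,K\}$, in the same spirit as the proof of \cref{lemma2}. The only differences are that the policy $v_n$ changes with $n$ and that we compare evaluation (not optimality) equations. The induction hypothesis is: for every compact $\cK\subset\Rd$,
\[
\sup_{(x,i)\in\cK\times\mathbb{S}}\bigl|\cJ_{k+1,h}^{n,v_n}(x,i)-\cJ_{k+1,h}^{v_n}(x,i)\bigr|\to 0 .
\]
No continuity of $v_n$ in $x$ or $n$ will be used. Hence the argument applies verbatim to an arbitrary sequence $\{\pi_n\}\subset\Um^h$, which gives the final assertion of the lemma.

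\textbf{Base case $k=K$.} Given a compact $\cK$, the exhaustion property $\cK_n\subset \mathrm{int}\,\cK_{n+1}$, $\bigcup_n\cK_n=\Rd$ gives $n_0$ with $\cK\subset\cK_n$ for all $n\ge n_0$. For such $n$, both terminal functions equal $c_T$ on $\cK\times\mathbb{S}$, so the difference vanishes identically.

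\textbf{Inductive step.} Fix a compact $\cK$ and $\varepsilon>0$. Let $\cK_\varepsilon$ be the compact set from \cref{lem:compact_containment}. For $n\ge n_0$ we have $\cK\subset\cK_n$, so $q_n=P_h$ and $r_n=c_h$ on $\cK$. Both evaluation equations then use the same stage cost $c_h(x,i,v_n(k,x,i))$ and the same kernel $P_h(\cdot\mid x,i,v_n(k,x,i))$. The stage costs cancel, leaving for $(x,i)\in\cK\times\mathbb{S}$
\[
\bigl|\cJ_{k,h}^{n,v_n}(x,i)-\cJ_{k,h}^{v_n}(x,i)\bigr|\le \sup_{\zeta\in\Act}\int_{\Rd\times\mathbb{S}}\bigl|\cJ_{k+1,h}^{n,v_n}(y,j)-\cJ_{k+1,h}^{v_n}(y,j)\bigr|P_h(dy,dj\mid x,i,\zeta).
\]
Taking a supremum over $\zeta$ removes the dependence on the possibly irregular, relaxed action $v_n(k,x,i)$. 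This uses linearity of $P_h$ in the relaxed control, so the bound for Dirac actions extends to mixtures. Split the integral over $\cK_\varepsilon$ and $\cK_\varepsilon^c$:
\begin{itemize}
\item On $\cK_\varepsilon$, the integrand is bounded by $\sup_{\cK_\varepsilon\times\mathbb{S}}|\cJ_{k+1,h}^{n,v_n}-\cJ_{k+1,h}^{v_n}|$, which tends to zero by the induction hypothesis applied to the compact set $\cK_\varepsilon$.
\item On $\cK_\varepsilon^c$, both functions are bounded uniformly in $n$ by $(K+1)M_3$, since by \hyperlink{B2}{(B2)} every stage cost and the terminal cost are bounded (with $h$ fixed, $c_h\le hM_3$). \cref{lem:compact_containment} gives $P_h(\cK_\varepsilon^c,\mathbb{S}\mid x,i,\zeta)<\varepsilon$ uniformly on $\cK\times\mathbb{S}\times\Act$, so this term is at most $2(K+1)M_3\varepsilon$.
\end{itemize}
Letting $n\to\infty$ and then $\varepsilon\downarrow0$ closes the induction.

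\textbf{Main obstacle.} The delicate point is the uniform boundedness of $\cJ_{k+1,h}^{n,v_n}$, which has to hold even for arguments outside $\cK_n$. There the model $\mathcal{M}^{\Rd}_n$ uses costs averaged against $\mu_n$, and I plan to check that these are still bounded by $M_3$ (respectively $hM_3$). This holds because averaging a bounded function against a probability measure preserves the bound. The bound must not depend on $n$ or on the policy; this is exactly what allows the tail term to be controlled by $\varepsilon$ alone, independently of the varying $v_n$.
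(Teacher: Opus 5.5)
Your proposal is correct and follows the argument the paper intends when it omits the proof as ``similar to \cref{lemma2}'': backward induction, $q_n=P_h$ and $r_n=c_h$ on $\cK$ for large $n$ so the stage costs cancel, then a split of the integral over $\cK_\varepsilon$ and $\cK_\varepsilon^c$ using the induction hypothesis, \cref{lem:compact_containment}, and a bound on both value functions that is uniform in $n$ and in the policy. Your remark that $\mu_n$-averaging preserves that bound is what makes the argument work for arbitrary sequences in $\Um^h$.

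One minor caveat concerns relaxed actions. If a relaxed action $\pv$ enters the Euler--Maruyama kernel through the averaged drift $b(x,i,\pv)$, then $P_h(\cdot\mid x,i,\pv)$ is not a mixture of the kernels $P_h(\cdot\mid x,i,\zeta)$. So instead of passing to $\sup_{\zeta\in\Act}$, you should apply the tail bound directly at the action $v_n(k,x,i)$. This is legitimate because compact containment extends uniformly to $\pV$ (for instance by Chebyshev's inequality), since $|b(x,i,\pv)|^2\le M_2$ and $\|\upsigma(x,i)\|^2\le M_2$ by \hyperlink{B1}{(B1)}.
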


\begin{proof}
The lemma can be proved using arguments similar to those in the proof of \cref{lemma2}, and we omit the details. 
\end{proof}

The following theorem is the main result of this subsection and states that the true cost functions of policies obtained from finite-state models converge to the value function of the original MDP. Hence, to obtain a near-optimal policy for the original MDP, it is sufficient to compute the optimal policy for the finite state model that has sufficiently large number of grid points.

\begin{theorem}\label{mainthm2} Under assumptions \hyperlink{B1}{(B1)}-\hyperlink{B2}{(B2)}.
For any compact set $\cK \subset \Rd$, we have
\begin{align}
\lim_{n\rightarrow\infty} \sup_{(x,i)\in \cK\times\mathbb{S}} |\cJ_{k,h}^{v_n}(x,i) - \cJ_{k,h}^*(x,i)| &= 0. \nonumber 
\intertext{Therefore,}
\lim_{n\rightarrow\infty} |\cJ_{k,h}^{v_n}(x,i) - \cJ_{k,h}^*(x,i)| &= 0 \text{  } \text{ for all $(x,i) \in \Rd\times\mathbb{S}$}. \nonumber 
\end{align}
\end{theorem}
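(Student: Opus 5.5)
The plan is a three-term triangle inequality. For $(x,i)\in\cK\times\mathbb{S}$ we write
\[
|\cJ_{k,h}^{v_n}(x,i)-\cJ_{k,h}^*(x,i)|\le |\cJ_{k,h}^{v_n}(x,i)-\cJ_{k,h}^{n,v_n}(x,i)|+|\cJ_{k,h}^{n,v_n}(x,i)-\cJ_{k,h}^{n*}(x,i)|+|\cJ_{k,h}^{n*}(x,i)-\cJ_{k,h}^{*}(x,i)|.
\]
Each term is then handled by a result already established.

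\begin{itemize}
\item The second term is at most $\varepsilon_n$ uniformly on $\Rd\times\mathbb{S}$, by \eqref{eq22}. That bound came from \cref{finite-state-op-policy} applied to the compact-state model $\mathcal{M}_h^{\cX_n}$, transferred to $\mathcal{M}^{\Rd}_n$ via \cref{lemma1}.
\item The third term tends to zero uniformly on $\cK\times\mathbb{S}$ by \cref{lemma2}.
\item The first term tends to zero uniformly on $\cK\times\mathbb{S}$ by \cref{lemma3}, which holds for an arbitrary sequence of Markov policies and hence for $\{v_n\}$.
\end{itemize}
Taking $\sup_{\cK\times\mathbb{S}}$ and letting $n\to\infty$ gives the first claim. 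The pointwise claim follows by choosing $\cK=\{x\}$.

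The main obstacle is \cref{lemma3}, since its proof was omitted and it must hold uniformly over a policy sequence that varies with $n$. I would prove it by backward induction on $k$, mirroring \cref{lemma2}.

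\begin{itemize}
\item \emph{Base case} $k=K$: both costs equal $c_T$ on $\cK\subset\cK_n$ for $n$ large, so the difference is zero.
\item \emph{Inductive step}: for $n\ge n_0$ we have $q_n=P_h$ and $r_n=c_h$ on $\cK$. The stage costs therefore cancel, because both use the same action $v_n(k,x,i)$; this is why no minimization over $\zeta$ is needed. The difference then reduces to
\[
\sup_{(x,i,\zeta)\in\cK\times\mathbb{S}\times\Act}\Big|\int (\cJ_{k+1,h}^{n,v_n}-\cJ_{k+1,h}^{v_n})\,dP_h(\cdot\mid x,i,\zeta)\Big|.
\]
\item I split this integral over $\cK_\varepsilon$ and $\cK_\varepsilon^c$, using \cref{lem:compact_containment}. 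On $\cK_\varepsilon^c$ the contribution is at most $2\cdot(\text{uniform bound})\cdot\varepsilon$, since all costs are bounded by a constant depending only on $M_3$, $T$ and $K$ by \hyperlink{B2}{(B2)}. On $\cK_\varepsilon$ the contribution is the supremum of the $(k+1)$-step differences, which vanishes by the induction hypothesis applied to the compact set $\cK_\varepsilon$.
\end{itemize}

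The key point is that the induction hypothesis must be stated for \emph{all} compact sets and be uniform in the policy sequence. Since the estimates never use any property of $v_n$ beyond measurability, this uniformity holds.

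A subtlety is that \eqref{eq22} is derived via \cref{lemma1}, which requires deterministic Markov optimal policies for both auxiliary models. I would note that these exist by \cite{hernandez2012discrete}*{Theorem 3.2.1}, using compactness of $\Act$, continuity of $c_{\cX_n}$, and the weak continuity of $p_{\cX_n}$ inherited from \cref{lem:weak_continuity}. Once these three convergences are in place, combining them and then letting $\varepsilon\downarrow0$ completes the argument.
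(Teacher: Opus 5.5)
Your proposal is correct and takes the same route as the paper, which also concludes by the three-term triangle inequality combining \eqref{eq22}, \cref{lemma2}, and \cref{lemma3}. Your backward-induction sketch of \cref{lemma3} is exactly the argument of \cref{lemma2} that the paper points to when omitting that proof. Your uniform cost bound depending on $M_3$, $T$ and $K$ is also more accurate than the bound $M_3$ stated in the paper.
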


\begin{proof}
The result follows from \cref{lemma2}, \cref{eq22}, and \cref{lemma3}. 
\end{proof}
\medskip
\noindent
Now we state our main result of this section: the optimal control designed for the finite model is asymptotically optimal in the controlled RSDP.

\begin{theorem}
  Suppose that Assumptions \hyperlink{B1}{(B1)}--\hyperlink{B2}{(B2)} hold. Let \(\hat {v}_n^{*,h}\) be the extended (to $\Rd$) optimal policy obtained from the finite state models \(\widehat {\mathcal{M}}_{h,n}\). Then for any compact set \(\cK\subset \Rd\), 
    \begin{equation*}
       \lim_{h\to 0}\lim_{n\to\infty} \sup_{(x,i)\in \cK\times\mathbb S}|{\cJ}_T^{\hat {v}_n^{*,h}}(x,i)-{\cJ}_T^*(x,i)|= 0 \qquad 
    \end{equation*}
    Therefore,
\begin{equation*}
\lim_{h\to 0}\lim_{n\rightarrow\infty} |\cJ_{T}^{\hat {v}_n^{*,h}}(x,i) - \cJ_{T}^*(x,i)| = 0\quad  \forall\, (x,i) \in \Rd\times\mathbb{S}
\end{equation*}
\end{theorem}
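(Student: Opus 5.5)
The plan is to split the error with a triangle inequality into a time-discretization part and a state-quantization part. Both are already controlled by earlier results. For fixed $h$, write $\hat v_n^{*,h}$ for the extended finite-model policy; it is a Markov policy on the grid $\{t_k\}$, and $\tilde v_n^{*,h}$ denotes its piecewise-constant continuous-time interpolation. For $(x,i)\in\cK\times\mathbb S$ we have
\begin{align*}
\bigl|\cJ_T^{\hat v_n^{*,h}}(x,i)-\cJ_T^*(x,i)\bigr|
&\le \bigl|\cJ_T^{\hat v_n^{*,h}}(x,i)-\cJ_{T,h}^{\hat v_n^{*,h}}(x,i)\bigr|
+\bigl|\cJ_{T,h}^{\hat v_n^{*,h}}(x,i)-V_T^h(x,i)\bigr|\\
&\qquad+\bigl|V_T^h(x,i)-V_T(x,i)\bigr|.
\end{align*}
Here $V_T^h=\cJ_{0,h}^*$ is the value of $\mathcal M_h$, and $V_T=\cJ_T^*$.

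The first term is a uniform EM error over grid-aligned piecewise-constant policies. \Cref{cor:CNearFinite} bounds it by $\tilde C h^{\gamma/2}$, independently of $n$ and of $(x,i)$. The third term is $O(h^{\gamma/2})$ by \cref{Tconvevaluefunc}. More precisely, its lower bound \eqref{EconvevaluefuncB} is uniform, and its upper bound holds with arbitrary $\epsilon$. We take the $\limsup$ in $h$ first and then let $\epsilon\downarrow0$. The second term is exactly the quantity in \cref{mainthm2} with $k=0$. For fixed $h$, it tends to $0$ uniformly on $\cK\times\mathbb S$ as $n\to\infty$.

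Taking $\lim_{n\to\infty}$ first, the middle term vanishes. We are left with $\limsup_n\sup_{\cK\times\mathbb S}|\cdots|\le \tilde C h^{\gamma/2}+\hat C_5h^{\gamma/2}$ (plus an $\epsilon$ that we then remove). Letting $h\to0$ gives the claim, and the pointwise statement follows by taking $\cK=\{x\}$.

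The main obstacle is making the first term legitimate. \Cref{cor:CNearFinite} is stated for policies that are piecewise constant on the $h$-grid and feed back on the sampled state $(X^h_{t_k},S^h_{t_k})$. When $\tilde v_n^{*,h}$ is applied to the true system \eqref{E1.1}, it must be read as feedback on $(X_{t_k},S_{t_k})$. So we need the bound of \cref{t-3.3} and \cref{corollary-em} for grid-constant feedback controls that depend on different sampled states in the two systems. I would first try to check that the proof of \cref{t-3.3} only uses the bound $|b|^2\le M_2$ for the control-mismatch term. Failing that, I would bound the control mismatch via the Lipschitz-in-$\zeta$ part of \hyperlink{B2}{(B2)}, as in \eqref{ETconvevaluefunc1A}. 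If this is problematic for discontinuous quantized policies, I would instead interpret $\cJ_T^{\hat v_n^{*,h}}$ as the cost of the interpolated control process $\tilde U^{*,h}$, as in \cref{TNearFinite}, which is how that theorem is stated. Either way the constant must be uniform in $n$, which is what allows the iterated limits to be taken in the stated order.
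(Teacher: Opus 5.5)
This is essentially the paper's proof: the same three-term triangle inequality, with the terms controlled by \cref{cor:CNearFinite}, \cref{mainthm2} (at $k=0$) and \cref{Tconvevaluefunc}, and your handling of the $\epsilon$-dependence of $\hat C_5$ and of the order of limits is more careful than the paper's one-line argument. On the obstacle you raise, only your last option actually works, for three reasons: with feedback on different sampled states, the drift's control mismatch is not made small by $|b|^2\le M_2$; \hyperlink{B2}{(B2)} gives Lipschitz continuity in $\zeta$ only for $c$; and quantized policies are discontinuous in $x$. By contrast, using the same interpolated control process in both systems (as in \cref{TNearFinite}) makes $\eta_{1,h}=0$ and gives an $n$-uniform constant.
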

\begin{proof} By the triangle inequality
   \begin{align*}
       |{\cJ}_T^{\hat {v}_n^{*,h}}(x,i)-{\cJ}_T^*(x,i)|&\le |{\cJ}_T^{\hat {v}_n^{*,h}}(x,i)-{\cJ}_{T,h}^{\hat {v}_n^{*,h}}(x,i)|+|{\cJ}_{T,h}^{\hat {v}_n^{*,h}}(x,i)- \cJ_{T,h}^{*}(x,i)|\\
       &\quad+|{\cJ}_{T,h}^{*}(x,i)-{\cJ}_T^*(x,i)|
   \end{align*} 
   From \cref{cor:CNearFinite,mainthm2,Tconvevaluefunc} we obtain \(\sup_{(x,i)\in \cK\times\mathbb S}|{\cJ}_T^{\hat {v}_n^{*,h}}(x,i)-{\cJ}_T^*(x,i)|\to 0\) as \(n\to\infty\) and \(h\to 0\).
\end{proof}

\section{Conclusion}
In this paper, we studied approximation of optimal control for state-dependent controlled regime-switching diffusions. We proved continuity of cost functionals under the Borkar topology and, using known density results, established near-optimality of finite-action, piecewise-constant, and Lipschitz policies.

We developed an approximation framework combining structural simplification of controls with time and state discretization for the finite-horizon problem. In particular, we constructed an Euler--Maruyama scheme under piecewise-constant controls and showed convergence of both the state process and value functions. We also introduced a finite-state approximation via quantization and proved uniform convergence on compact sets and asymptotic optimality of the resulting policies.

Our results assume non-degenerate dynamics; extending them to degenerate models remains an open problem. Other directions for future work include convergence results for general admissible control policies, finite-state approximation for infinite-horizon cost criteria, extensions to partially observed systems, sharper convergence rates, and the development of efficient computational methods.
\section*{Acknowledgment}
This research of the first author was partially supported by a Start-up Grant IISERB/ R\&D/2024-25/154 and Prime Minister Early Career Research Grant ANRF/ECRG/2024/001658/ PMS. The research of the second author was partially supported by the UGC Junior Research Fellowship (UGC-JRF), Ministry of Education, Government of India.


\appendix
\section{}
The following Lemma is adapted from \cite{nguyen2025hybrid}*{Lemma 2.5}
\begin{lemma}\label{t-3.1}
For any bounded and measurable function $f:\mathbb{S}\times\mathbb{S}\to\RR$ and 
$(x,i,\bar{x},j)\in\Rd\times\mathbb{S}\times\Rd\times\mathbb{S}$, we have
\begin{align}\label{2.30}
\int_{\RR_+} \bigl[f(i+h(x,i,z),\, i+h(\bar x,i,z)) - f(i,i)\bigr]\,\mathbf m(dz)
&= \sum_{k\neq i} (m_{ik}(x)-m_{ik}(\bar x))^+ \bigl(f(k,i)-f(i,i)\bigr) \nonumber \\
&\quad + \sum_{k\neq i} (m_{ik}(\bar x)-m_{ik}(x))^+ \bigl(f(i,k)-f(i,i)\bigr) \nonumber \\
&\quad + \sum_{k\neq i} (m_{ik}(x)\wedge m_{ik}(\bar x)) \bigl(f(k,k)-f(i,i)\bigr),
\end{align}
and for $i\neq j$,
\begin{align}\label{2.31}
\int_{\RR_+} \bigl[f(i+h(x,i,z),\, j+h(\bar{x},j,z)) - f(i,j)\bigr]\,\mathbf m(dz)
&= \sum_{k\neq i} m_{ik}(x)\bigl(f(k,j)-f(i,j)\bigr) \nonumber \\
&\quad + \sum_{l\neq j} m_{jl}(\bar x)\bigl(f(i,l)-f(i,j)\bigr).
\end{align}
Consequently, for any $i\in\mathbb{S}$ and $x,\bar{x}\in\Rd$, we have
\begin{equation}\label{2.32}
\int_{\RR_+} \mathbf 1_{\{h(x,i,z)\neq h(\bar x,i,z)\}}\,\mathbf m(dz)
\le \sum_{k\neq i} |m_{ik}(x)-m_{ik}(\bar x)|.
\end{equation}
\end{lemma}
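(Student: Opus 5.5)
The plan is to compute the integrals directly from the explicit interval construction of $h$ in \eqref{markov-h}. Here the rates depend only on $x$, as under \hyperlink{B1}{(B1)}. The key structural fact I will isolate first concerns the ordered pairs $(i,k)$ with $i\neq k$. Each such pair is assigned a fixed ``slot'' $I_{ik}=[a_{ik},a_{ik}+g_{ik})\subset\RR_+$, where $a_{ik}$ is the common left endpoint (the sum of the preceding $g$'s in the enumeration). These slots are pairwise disjoint for distinct ordered pairs, and since $m_{ik}(x)\le g_{ik}$ we have $\Delta_{ik}(x)=[a_{ik},a_{ik}+m_{ik}(x))\subset I_{ik}$ for every $x$. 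Consequently $h(x,i,z)=k-i$ exactly when $z\in\Delta_{ik}(x)$, and $h(x,i,z)=0$ when $z\notin\bigcup_{k\ne i}\Delta_{ik}(x)$. In particular the integrand in both identities vanishes outside $\bigcup_{k\ne i}I_{ik}\cup\bigcup_{l\ne j}I_{jl}$, which is a set of finite measure, so every integral is a finite sum of integrals over slots.

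For \eqref{2.30} (same regime $i$ in both coordinates), I will split $\RR_+$ into the slots $I_{ik}$, $k\neq i$, and their complement. On the complement both jumps are $0$ and the integrand is $f(i,i)-f(i,i)=0$. Inside a fixed slot $I_{ik}$, both intervals $\Delta_{ik}(x)$ and $\Delta_{ik}(\bar x)$ start at $a_{ik}$, so the slot splits into three pieces:
\begin{itemize}
\item on $[a_{ik},a_{ik}+m_{ik}(x)\wedge m_{ik}(\bar x))$ both coordinates jump to $k$, contributing $(m_{ik}(x)\wedge m_{ik}(\bar x))(f(k,k)-f(i,i))$;
\item on the remaining part of the longer interval only one coordinate jumps. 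This piece has length $(m_{ik}(x)-m_{ik}(\bar x))^+$ with value $f(k,i)-f(i,i)$, or length $(m_{ik}(\bar x)-m_{ik}(x))^+$ with value $f(i,k)-f(i,i)$;
\item on the rest of the slot the integrand is $0$.
\end{itemize}
Summing over $k\neq i$ gives \eqref{2.30}. Disjointness of the slots is exactly what rules out mixed terms such as $f(k,l)$ with $k\neq l$.

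For \eqref{2.31} with $i\neq j$, the first coordinate can only jump on the slots $I_{ik}$ and the second only on the slots $I_{jl}$. These are slots for different ordered pairs, hence disjoint, so the two coordinates never jump simultaneously. On $\Delta_{ik}(x)$ the integrand equals $f(k,j)-f(i,j)$, on $\Delta_{jl}(\bar x)$ it equals $f(i,l)-f(i,j)$, and it is zero elsewhere. Integrating gives the two sums in \eqref{2.31}.

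Finally, \eqref{2.32} follows by applying \eqref{2.30} with $f(a,b)=\mathbf 1_{\{a\neq b\}}$. Since $h(x,i,z)\neq h(\bar x,i,z)$ exactly when $i+h(x,i,z)\neq i+h(\bar x,i,z)$, the left side equals $\sum_{k\ne i}\bigl[(m_{ik}(x)-m_{ik}(\bar x))^++(m_{ik}(\bar x)-m_{ik}(x))^+\bigr]=\sum_{k\neq i}|m_{ik}(x)-m_{ik}(\bar x)|$, so in fact equality holds. The main (though mild) obstacle is the bookkeeping that establishes the slot disjointness and containment from the enumeration of the $\Delta_{ij}$ together with the definition $g_{ij}=\sup m_{ij}$. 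Once that is settled, the rest is a direct evaluation.
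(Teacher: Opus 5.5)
Your proof is correct. The paper does not prove this lemma itself; it imports it from the cited reference (Lemma~2.5 there). Your slot-by-slot evaluation, which uses the common left endpoints of $\Delta_{ik}(x)$ and $\Delta_{ik}(\bar x)$ and the disjointness of the slots for distinct ordered pairs, is the standard direct computation behind that lemma. You are also right that \eqref{2.32} in fact holds with equality.
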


\medskip

\begin{proof}[Proof of \cref{lemma1}]
The identity \cref{eq51} for a fixed policy follows from the fact that for any $x,y \in \cK_n^c$, $i,j \in \bS$ and $\zeta \in \Act$,
\[
r_n(x,i,\zeta) = r_n(y,i,\zeta), 
\qquad 
q_n(\cdot,j \mid x,i,\zeta) = q_n(\cdot,j \mid y,i,\zeta).
\]
Hence, under the extended policy $v$, all states $(x,i)$ with $x \in \cK_n^c$ have identical one-stage costs and transition probabilities, and therefore behave identically to the pseudo-state $(\Delta_n,i)$. This yields \cref{eq51}.

\medskip

To prove \cref{eq50}, it suffices to show that
\begin{equation}\label{eq}
\inf_{v \in {\Uadm}_{\mathsf{m}}^h} \cJ_{k,h}^{n,v}(x,i)
=
\inf_{v \in \bar{\Uadm}_{\mathsf{m}}^h} \cJ_{k,h}^{n,v}(x,i),
\end{equation}
where $\bar{\Uadm}_{\mathsf{m}}^h \subset {\Uadm}_{\mathsf{m}}^h$ denotes the class of policies that are constant in $x$ over $\cK_n^c$ for each fixed $i$.

The inequality
\begin{equation}\label{ineq}
\inf_{v \in {\Uadm}_{\mathsf{m}}^h} \cJ_{k,h}^{n,v}(x,i)
\le
\inf_{v \in \bar{\Uadm}_{\mathsf{m}}^h} \cJ_{k,h}^{n,v}(x,i)
\end{equation}
is immediate since $\bar{\Uadm}_{\mathsf{m}}^h \subset {\Uadm}_{\mathsf{m}}^h$.

\medskip

We prove by backward induction that if $v \in {\Uadm}_{\mathsf{m}}^h$ is an optimal policy then there exists an optimal policy $\hat v \in \bar{\Uadm}_{\mathsf{m}}^h$ satisfying
\begin{align}\label{eq2}
 \cJ_{k',h}^{n,v}(x,i)
= \cJ_{k',h}^{n,\hat v}(x,i)\quad \forall\,k'\ge k
\end{align}
This immediately implies \cref{eq}. For $k'=K$, \cref{eq2} trivially holds since $\cJ_{K,h}^{n,\hat v}(x,i)=\cJ_{K,h}^{n,v}(x,i)=\hat c_T(x,i)$ for all $(x,i)\in\Rd\times\bS$. Assume that there exists a policy
$\hat v^{\,k+1}\in\bar{\Uadm}_{\mathsf m}^h$
such that
\[
\cJ_{k+1,h}^{n,v}(x,i)
=
\cJ_{k+1,h}^{n,\hat v^{\,k+1}}(x,i),
\qquad
\forall (x,i)\in\Rd\times\bS.
\] For the induction step at stage $k$, define a new Markov policy
$\hat v^{\,k}$ by modifying $v$ only at stages $k,k+1,\ldots,K-1$.
Fix $z \in \cK_n^c$ and define  $\hat v^{\,k}$ by
\begin{align*}
\hat v^{\,k}(k,x,i)&=
\begin{cases}
v(k,x,i), & (x,i)\in\cK_n\times\bS,\\
v(k,z,i), & (x,i)\in\cK_n^c\times\bS,
\end{cases}\\
\text{and}\qquad
\hat v^{\,k}(k',x,i)
&=
\hat v^{\,k+1}(k',x,i),
\qquad k'>k.
\end{align*}
\medskip
\noindent
\textbf{Claim:} $ \cJ_{k,h}^{n,v}(x,i)
= \cJ_{k,h}^{n,\hat v^{\,k}}(x,i)
$.

\medskip
\noindent
To prove the induction step at stage $k$, using the Bellman equation for $(x,i)\in\cK_n^c\times\bS$, we have
\begin{align}
    \cJ_{k,h}^{n,\hat v^{\,k}}(x,i)
&=
r_n(x,i,\hat v^{\,k}(k,x,i))
+
\int_{\Rd\times\bS}
\cJ_{k+1,h}^{n,\hat v^{\,k}}(y,j)\,
q_n(dy,dj \mid x,i,\hat v^{\,k}(k,x,i)),\label{bellman}\\
&=
r_n(z,i, v(k,z,i))
+
\int_{\Rd\times\bS}
\cJ_{k+1,h}^{n,v}(y,j)\,
q_n(dy,dj \mid z,i, v(k,z,i)),\nonumber\\
&=\cJ_{k,h}^{n,v}(z,i)\nonumber
\end{align}
where the second equality follows from the construction of
$r_n$ and $q_n$, together with the induction hypothesis
$\cJ_{k+1,h}^{n,\hat v^{\,k}}
=
\cJ_{k+1,h}^{n,v}$.

\medskip
\noindent
Now, we have two cases

\medskip

\noindent
\textbf{Case 1:} $\cJ_{k,h}^{n,v}(x,i)$ is constant on $\cK_n^c$.

\medskip
\noindent
Since 
$\hat v^{\,k}$ and $v$ coincide on $\cK_n$, and $\cJ_{k,h}^{n,v}$ is constant on $\cK_n^c$
, from \cref{bellman}, we have
\[
\cJ_{k,h}^{n,\hat v^{\,k}}(x,i)
=
\cJ_{k,h}^{n,v}(x,i),
\quad \forall\, (x,i)\in \Rd\times\bS.
\]

\medskip

\noindent
\textbf{Case 2:} $\cJ_{k,h}^{n,v}(x,i)$ is not constant on $\cK_n^c$.

Then there exist $z,y \in \cK_n^c$ such that
\[
\cJ_{k,h}^{n,v}(z,i) < \cJ_{k,h}^{n,v}(y,i),\qquad i\in\bS
\]
Define $\hat v^{\,k}$ as above using $z$.
Since $r_n$ and $q_n$ do not depend on $(x,i) \in \cK_n^c\times\bS$, 
using the Bellman equation \cref{bellman},
we obtain
\begin{align}\label{bellman2}
\cJ_{k,h}^{n,\hat v^{\,k}}(y,i)
=\cJ_{k,h}^{n,v}(z,i)<\cJ_{k,h}^{n,v}(y,i)
\end{align}
since $ \cJ_{k,h}^{n,v}(y,i)\le \cJ_{k,h}^{n,\hat v^{\,k}}(y,i)$ from \cref{bellman2}, we obtain
\[ \cJ_{k,h}^{n,v}(y,i)
<
 \cJ_{k,h}^{n, v}(y,i)\]
 which is a contradiction. Hence, Case 2 is impossible.
Therefore
\(
\cJ_{k,h}^{n,v}
\)
is constant on
\(
\cK_n^c\times\bS.
\). Thus, we have
\[
\cJ_{k,h}^{n,\hat v^{\,k}}(x,i)
=
\cJ_{k,h}^{n,v}(x,i),
\qquad
\forall (x,i)\in\Rd\times\bS.
\]
This proves the induction step. Therefore, \eqref{eq2} holds for every stage
\(k'=K,K-1,\ldots,k\),
and consequently \eqref{eq} follows. Since \(v\) is optimal, \(\hat v^{\,k}\) attains the optimal value function at every state, and is therefore an optimal policy. Repeating the same argument for
\(k=K-1,K-2,\ldots,0\),
one obtains an optimal policy that is constant on
\(\cK_n^c\) at every stage.
Finally, if $\bar v_n^*$ is optimal for $\mathcal{M}_h^{\cX_n}$, then its extension $v_n^*$ attains the optimal value in $\mathcal{M}_h^{\Rd}$, and is therefore optimal.
\end{proof}


\bibliography{Dinesh_Robustness}
\bibliographystyle{elsarticle-num} 

\end{document}